\newtheorem{theorem}{Theorem}
\newtheorem{prop}{Proposition}
\newtheorem{lemma}{Lemma}
\newtheorem{Cor}{Corollary}
\newtheorem{nb}{Remark}
\newtheorem{Def}{Definition}
\DeclareMathOperator*{\dis}{dist}
\DeclareMathOperator*{\real}{Re}
\DeclareMathOperator*{\Imm}{Im}
\begin{document}
\title{On Brolin's theorem over the quaternions}
\date{}
\author{C. Bisi, A.De Martino}
\maketitle
\begin{abstract}
\noindent
In this paper we investigate the Brolin's theorem over $\mathbb{H}$, the skew field of quaternions. Moreover, considering a quaternionic polynomial $p$ with real coefficients, we focus on the properties of its equilibrium measure, among the others, the mixing property and the Lyapunov exponents of the measure. We prove a central limit theorem and we compute the topological entropy and measurable entropy with respect to the quaternionic equilibrium measure. We prove that they are equal considering both a  quaternionic polynomial with real coefficients and a polynomial with coefficients in a slice but not all real. Brolin's theorems for the one slice preserving polynomials and for generic polynomials are also proved.
\end{abstract}

\section{Introduction}
In 1965 Hans Brolin proved a very important theorem in complex dynamical systems \cite{B}. This remarkable result deals with the distribution of preimages of generic points for holomorphic functions.

This theorem was studied in several contexts, for instance Lyubich \cite{L}, Freire, Lopez and Mané \cite{FLM} generalized Brolin's theorem to rational maps in $ \mathbb{P}^{1}_{\mathbb{C}}$ and Favre and Jonsson proved a version of Brolin's theorem in two complex dimension \cite{FJ}. An overview over the equilibrium measure can be found in \cite{B2}.

In the case of a complex polynomial $p(z)$ of degree $d \geq 2$, Brolin's theorem asserts the existence of a unique equilibrium measure of the polynomial with the property that the preimages of the iterates of the polynomial $p(z)$ of a generic point of the plane are equally distributed. In the complex case this kind of measure is linked to several dynamical properties of the polynomial: its support is related to the Julia set of the polynomial, it is an invariant measure, it is the unique measure of maximal entropy equal to $ \log d$ and it satisfies the central limit theorem \cite{DS, DS1, DS2}.

The aim of this paper is to generalize these results in the quaternionic setting. Now, in order to state the Brolin's theorem we give two preliminary definitions over $\mathbb{C}.$
\newline
\newline
\textbf{Definition}
$$ G_{p}(z)= \lim_{n \to + \infty} G_{n}(z), \qquad  \forall z \in \mathbb{C},$$
where
$$ G_{n}(z)= d^{-n} \log^{+}|p^{n}(z)|,$$
and $p^{n}$ is the n-times composition of $p$.
\newline
\newline
\textbf{Definition}
Given a function $f$ from a set $X$ to itself, the set $ \mathcal{E}$ is called exceptional if it is finite and if
$$ f(\mathcal{E})=\mathcal{E} \qquad f^{-1}(\mathcal{E})=\mathcal{E}.$$
\newline
\newline
If $f$ is a polynomial and $X=\mathbb{C}$ the exceptional set $\mathcal{E}$ is uniquely defined and its cardinality is less or equal than $2$.
Now, we are ready to state the complex Brolin's theorem.
\newline
\newline
\textbf{Theorem}[Brolin's theorem]
\label{start}
Let $p$ a polynomial $p : \mathbb{C} \to \mathbb{C}$ with degree $d \geq 2$. Then for all $ a \in \mathbb{C} \setminus \mathcal{E} $ the following measure
$$ \mu_{n}:= d^{-n} \sum_{b \in p^{-n}(a)} \delta_{b},$$
converges weakly to the so-called equilibrium measure $ \mu$; i.e.
$$ \mu_{n} \rightharpoonup \mu:= \Delta G_{p}.$$
The equilibrium measure does not depend on the choice of $a \in \mathbb{C} \setminus \mathcal{E} $.
\newline
\newline

The plan of the paper is the following: in section 2 we recall some basic notions over quaternionic functions. In section 3 we deal with the proofs of some properties of the operator $ \Delta_{*}$ (introduced by C.Bisi and J.Winkelmann in \cite{BW}) which are going to be useful in section 4 where is proved the Brolin's theorem in the slice preserving case.

Let us consider the quaternionic polynomial $p$ with real coefficients, we denote by $p_I$ the restriction of the polynomial to the complex plane $ \mathbb{C}_{I}$, defined by
$$  \mathbb{C}_I:= \{ q \in \mathbb{H} \,\, | \,\, q= \alpha+I \beta, \, \, \, \alpha, \beta \in \mathbb{R} \}.$$
We will prove in section 2 that $ \mathbb{C}_I$ is $p_I$ -invariant for all $I \in \mathbb{S}$, if $p$ has real coefficients.
We define by $ \mathcal{E}_I$ the exceptional set of the polynomial $p_I$.
\newline
\newline
\textbf{Theorem}
Let $p$ be a polynomial with real coefficients with degree $d \geq 2$. Then for all $I \in \mathbb{S}= \{q \in \mathbb{H}, \, q^2=-1 \}$ and for every $ a \in \mathbb{\mathbb{R}} \setminus (\mathcal{E}_I \cap \mathbb{R})$ the measure
$$ \mu_{n}:= d^{-n} \sum_{b \in p^{-n}(a)} \delta_{b},$$
converges weakly to the equilibrium measure $ \mu$, where $ \delta_b$ is the Dirac delta centred in $b$ if $b$ is a point, otherwise it is the Lebesgue measure of the sphere if $b$ is a sphere. This means that
$$ \mu_{n} \rightharpoonup \mu:= \Delta_{*} G_{p}.$$
The equilibrium measure does not depend on the choice of $a \in \mathbb{\mathbb{R}} \setminus (\mathcal{E}_I \cap \mathbb{R})$.
\newline
\newline
We remark that it is possible to write the equilibrium measure as an integral form: $ \mu= \frac{1}{4 \pi} \int_{I \in \mathbb{S}} \mu_{I} \, dI$.
\\In section 5 we show the Brolin's theorem in the one slice preserving  case, the idea is to generalize to all the slices what happens on a slice. We define $ \mathcal{E}'_I$ the exceptional set of $g_n:=(\mathcal{P}^{n}_{I})^{s}$ restricted to the slice $ \mathbb{C}_I$.
\newline
\newline
\textbf{Theorem}
Let $\mathcal{P}_I$ be a polynomial with degree $d \ge 2$ and all coefficients in $\mathbb{C}_I$ but not all real. Then we have for all $a \in \mathbb{R} \setminus ( \mathcal{E}'_I \cap \mathbb{R}):$
$$  \frac{1}{2}\Delta_* \log^{+} |g_{n}-a| \rightharpoonup \frac{1}{8 \pi} \int_{I \in \mathbb{S}} \mu_{\mathfrak{P}(I)} \, dI + \frac{1}{8 \pi} \int_{I \in \mathbb{S}} \mu_{\mathfrak{P}^{c}(I)} \, dI:= \mu',$$
where if $\mathcal{P}_I(q)=q^d a_d(I)+...+a_0(I)$ with $ q \in \mathbb{H}$ and $a_{\ell}(I)=x_\ell+I y_\ell$ ($x_\ell, y_\ell \in \mathbb{R}$) for $ \ell =0,...,d$, then, for all $J \in \mathbb{S}$, we have $ \mathfrak{P}(q, J)=q^da_d(J)+...+a_0(J)$ with $ q \in \mathbb{C}_J$ and $a_{\ell}(J)=x_\ell+J y_\ell$ ($x_\ell, y_\ell \in \mathbb{R}$) for $ \ell =0,...,d$.
We pointed out that $I$ in $\mathcal{P}_I(q)$ is fixed while in $\mathfrak{P}(q, J)$ the imaginary unit $J$ is a variable. We call the limit $ \mu'$ \emph{the Equilibrium measure of $\mathcal{P}_I$} in this case and it does not depend on the chosen $a \in \mathbb{R}\setminus (\mathcal{E}'_I \cap \mathbb{R}).$
\newline
\newline
In section 6 we consider the following composition \cite[Def. 2.1]{GSS1}, \cite{CSS1}.
\newline
\newline
\textbf{Definition}
Denoting $g(q)= \sum_{n=0}^{\infty} q^n a_n$ and $ w(q)=\sum_{n=0}^{\infty} q^n b_n$. We define
$$ (g \bullet w)(q)= \sum_{n=0}^\infty (w(q))^{*n} a_n,$$
where $(w(q))^{*n}$ means that we take the $n$-th power with respect to the $*$- product, which is the operation in the quaternionic setting that gives us a regular function from the multiplication of two regular functions (see \eqref{ref3}). We recall that a function is slice regular if it is holomorphic in every slice, i.e. in the complex planes $ \mathbb{C}_I$ when the imaginary unit varies.
This helps us to give a sort of Brolin's theorem for a quaternionic polynomial with coefficients in different slices. For this kind of polynomials we defined the exceptional set as
$$ \mathcal{E''}:= \{q:\, |[q]|< \infty \}.$$

where $[q]$ is the orbit, defined as
$$ [q]:= \{h_n(q)| \, \, n \in \mathbb{N}\}$$
and $h_n(q):=(p^{\bullet n})^{s}$.
\newline
\newline
\textbf{Theorem}
Let $p(q)$ be a slice regular polynomial with degree $d \ge 2$ and coefficients in different slices. Then, for all $a \in \mathbb{R} \setminus (\mathcal{E''} \cap \mathbb{R})$ and $b \in \mathbb{R} \setminus (\mathcal{E''} \cap \mathbb{R})$ with $a \neq b $  we have that
$$ d^{-n} \log|h_n(q)-a|-d^{-n} \log|h_n(q)-b| \to 0.$$

However in this general case we cannot give a limit measure.
\\ We can provide a counterexample where the equidistribution property fails. Let us consider $p(q)=q^n$ and $a \in \mathbb{H} \setminus \mathbb{R}$. If $a \in \mathbb{C}_I$, in the slice $ \mathbb{C}_I$ we obtain the equilibrium measure $ \mu_I$. On the other hand if $ a \in \mathbb{C}_J$, in the slice $ \mathbb{C}_J$ we obtain the equilibrium measure $ \mu_J$.
This means that the equilibrium measure depends on the choice of the point.
\\In section 7 the main result is the proof of the fact that the equilibrium measure is mixing, this means that if $\mu$ is the equilibrium measure and $ \varphi, \psi \in C^0(\mathbb{H})$ we have
$$ \langle \mu, ( \psi \circ p^{n}) \cdot \varphi \rangle \to \langle \mu, \varphi \rangle \langle \mu, \psi \rangle.$$
This happens both in the slice preserving and in the one slice preserving case. In section 8 we prove a central limit theorem for a quaternionic polynomial with real coefficients and for the same kind of polynomials in the last section we evaluate their topological entropy and measurable entropy with respect to the quaternionic equilibrium measure. We observed that in this case the two entropies are equal to $ \log d$ ($d$ is the degree of the polynomial). In subsection 9.1 is proved that also for polynomials $\mathcal{P}_I$ with coefficients in a slice but not all real the topological entropy and measurable entropy are equal to $ \log d$.

\section{Prerequisites about quaternionic functions}

In this section we will overview and collect the main notions and results needed for our aims.
First of all, let us denote by $ \mathbb{H}$ the real algebra of the quaternions. An element $q \in \mathbb{H}$ is usually written as $q=x_0+ix_1+jx_2 +kx_3$, where $i^2=j^2=k^2=-1$ and $ijk=-1$. Given a quaternion $q$ we introduce a conjugation in $ \mathbb{H}$ (the usual one), as $q^c=x_0-ix_1-jx_2 -kx_3$; with this conjugation we define the real part of $q$ as $Re(q):= (q+q^c)/2$ and the imaginary part as $Im(q)=(q-q^c)/2$. With the defined conjugation we can write the euclidian square norm of a quaternion $q$ as $|q|^2=qq^c$. The subalgebra of real numbers will be identified, of course, with the set $ \mathbb{R}= \{ q \in \mathbb{H}| \, Im(q)=0 \}$

Now, if $q$ is such that $Re(q)=0$, then the imaginary part of $q$ is such that $ (Im(q)/|Im(q)|)^2=-1$. More precisely, any imaginary quaternion $I=ix_1+jx_2 +kx_3$, such that $x^2_1+x^2_2+x^2_3=1$ is an imaginary unit. The set of imaginary units is then a real 2-sphere and it will be conveniently denoted as follows
$$ \mathbb{S}:= \{ q \in \mathbb{H}\,\,| \,\, q^2=-1 \}= \{q  \in \mathbb{H}\,\,| \,\, Re(q)=0, \, |q|=1 \}.$$
With the previous notation, any $q \in \mathbb{H}$ can be written as $q= \alpha+I \beta$, where $ \alpha, \beta \in \mathbb{R}$ and $ I \in \mathbb{S}$.

Given any $I \in \mathbb{S}$ we will denote the real subspace of $ \mathbb{H}$ generated by 1 and $I$ as
$$ \mathbb{C}_I:= \{ q \in \mathbb{H} \,\, | \,\, q= \alpha+I \beta, \, \, \, \alpha, \beta \in \mathbb{R} \}.$$
Sets of the previous kind will be called \emph{slices} and they are also complex planes. All these notations reveal now clearly the \emph{slice} structure of $ \mathbb{H}$ as union of complex planes $ \mathbb{C}_I$ for $I$ which varies in $ \mathbb{S}$, i.e.
$$ \mathbb{H}= \bigcup_{I \in \mathbb{S}}\mathbb{C}_{I}, \quad \bigcap_{I \in \mathbb{S}} \mathbb{C}_I= \mathbb{R}.$$

We denote the real 2-sphere with center $ \alpha \in \mathbb{R}$ and radius $| \beta|$ with $ \beta \in \mathbb{R}$ (passing through $ \alpha+I\beta \in \mathbb{H}$), as:
\begin{equation}
\label{rr1}
\mathbb{S}_{\alpha+I \beta}:= \{ q \in \mathbb{H} \,\, |\,\,  q= \alpha+I \beta, \,\,\, I \in \mathbb{S} \} \subset \mathbb{H}.
\end{equation}
Obviously, if $\beta=0$, then $ \mathbb{S}_{\alpha}= \{ \alpha \}$.

Now, we are going to introduce the main definitions and features of slice functions, following \cite{GP}.
\\ The complexification of $ \mathbb{H}$ is defined to be the real tensor product between $\mathbb{H}$ itself and $ \mathbb{C}$:
$$ \mathbb{H}_\mathbb{C}:= \mathbb{H} \otimes_{\mathbb{R}} \mathbb{C}:= \{p+\iota q| \quad p,q \in \mathbb{H} \}.$$
In $ \mathbb{H}_\mathbb{C}$, the following associative product is defined: if $p_1+\iota q_1$, $p_2+\iota q_2$ belong to $ \mathbb{H}_{\mathbb{C}}$ then,
$$ (p_1+\iota q_1)(p_2+\iota q_2)=p_1p_2-q_1q_2+\iota (p_1q_2+q_1p_2).$$
The usual complex conjugation $ \overline{p+ \iota q}=p- \iota q$ commutes with the following involution $(p+\iota q)^c= p^c+ \iota q^c$, where $p^c$ and $q^c$ are quaternionic conjugates.

We introduce now a class of subsets of $ \mathbb{H}$ also investigated in \cite{BW2}

\begin{Def}
Given any set $D \subseteq \mathbb{C}$, we define its circularization (or axially symmetric completion) as the subset in $ \mathbb{H}$ defined as follows
$$ \Omega_D:= \{ \alpha+I \beta| \, \, \alpha+i \beta \in D, I \in \mathbb{S} \}.$$
Such subsets of $ \mathbb{H}$ are called circular sets  or (axially symmetric). If $D \subseteq \mathbb{C}$ is such that $D \cap \mathbb{R} \neq \emptyset$, then $\Omega_D$ is also called slice domain (see \cite{GP}).
\end{Def}
From now on, $ \Omega_D \subseteq \mathbb{H}$ will always denote a circular domain. We can state now the following definition \cite{GP,S1}.
\begin{Def}
\label{Stem}
Let $ D \subseteq \mathbb{C}$ be any symmetric set with respect to the real line. A function $F=F_1+ \iota F_2: D \to \mathbb{H}_{\mathbb{C}}$ such that $F(\bar{z}) = \overline{F(z)}$ is said to be a stem function.
\\A function $f: \Omega_D \to \mathbb{H}$ is said to be a (left) slice function if it is induced by a stem function $F=F_1+ \iota F_2$ defined on $D$ in the following way: for any $ \alpha+I \beta \in \Omega_D$
$$ f( \alpha+I \beta)=F_1  (\alpha+ i \beta)+IF_2( \alpha+ i \beta).$$
If a stem function $F$ induces the slice function $f$, we will write $f= \mathcal{I}(F)$. The set of slice functions defined on a certain circular domain $\Omega_D$ will be denoted by $ \mathcal{S}( \Omega_D)$. Moreover, we denote by $ \mathcal{S}^k( \Omega_D)$ the set of slice function of class $C^k$, with $k \in \mathbb{N} \cup \{ \infty \}$.
\end{Def}

Let now $D \subseteq \mathbb{C}$  be an open set and $z= \alpha+i \beta \in D$. Given a stem function $F=F_1+ \iota F_2:D \to \mathbb{H}_{\mathbb{C}}$ of class $C^1$ then
$$ \frac{\partial F}{\partial z}, \frac{\partial F}{\partial \bar{z}}:D \to \mathbb{H}_{\mathbb{C}},$$
defined as
$$ \frac{\partial F}{\partial z}= \frac{1}{2} \left( \frac{\partial F}{\partial \alpha}- \iota \frac{\partial F}{\partial \beta} \right) \quad \hbox{and} \quad \frac{\partial F}{\partial \bar{z}}= \frac{1}{2} \left( \frac{\partial F}{\partial \alpha}+ \iota \frac{\partial F}{\partial \beta} \right)$$
are stem functions. The previous stem functions induce the continuous \emph{slice derivatives}
$$ \partial_c f= \mathcal{I} \left( \frac{\partial F}{\partial z} \right), \quad \overline{\partial_c} f= \mathcal{I} \left( \frac{\partial F}{\partial \bar{z}} \right).$$

\begin{Def}
Let $\Omega_D$ be a circular open set. A function $f= \mathcal{I}(F) \in \mathcal{S}^1(\Omega_D)$ is (left) slice regular if its stem function $F$ is holomorphic, i.e. each of the 4 complex components of $F$ is holomorphic. The set of slice regular functions will be denoted by
$$ \mathcal{SR}(\Omega_D):= \{f \in \mathcal{S}^1(\Omega_D)| f= \mathcal{I}(F), F:D \to \mathbb{H}_{\mathbb{C}}\quad \hbox{holomorphic} \}.$$

Equivalently, a slice function $f \in \mathcal{S}^1 (\Omega_D)$ is slice regular if the following equation holds \cite{GSS}
$$ \overline{\partial_c}f( \alpha+ J \beta)=0, \quad \forall \alpha+J \beta \in \Omega_D.$$
\end{Def}
In order to be precise the first time the slice regular theory appeared in a book was in 2011, see \cite{CSS}.

It is now well-known that the $*$-product between two power series in the variable $q \in \mathbb{H}$ coincides with their convolution product i.e.  if $f(q)= \sum_{j} q^j a_j$ and $g(q)= \sum_{k}q^k b_k$ are converging power series with coefficients $a_j, b_k \in \mathbb{H}$, then
\begin{equation}
\label{ref3}
(f*g)(q):= \sum_{n} q^n \left( \sum_{j+k=n}a_jb_k \right).
\end{equation}
\begin{Def}
\label{ref2}
A slice function $f= \mathcal{I}(F)$ is called \emph{slice-preserving} if, for all $J \in \mathbb{S}$, $f(\Omega_D \cap \mathbb{C}_J) \subseteq \mathbb{C}_J$.
\end{Def}
It is now easy to see that if $f$ is a slice preserving and $g$ is any slice function then $fg=f*g=g*f$. If both $f$ and $g$ are slice preserving, then $fg=f*g=g*f=gf$.
This result will be very useful.
\begin{lemma}
\label{refee1}
A polynomial is slice preserving if and only if it has real coefficients.
\end{lemma}
\begin{proof}
If $p$ has real coefficients by definition it is trivial that it is slice preserving. The viceversa, since the polynomial is analytic it is possible to write its series expansion, which has real coefficients by hypothesis. From the identity principle of complex analysis it is possible to write the series with a complex variable. Thus by definition of slice preserving function we have the thesis.
\end{proof}
\begin{Def}
\label{ref5}
A slice regular function $f$ is one-slice preserving if there exists $J \in \mathbb{S}$ such that $f(\Omega_D \cap \mathbb{C}_J) \subset \mathbb{C}_J$; for a fixed $J \in \mathbb{S}$, these functions will also be called $\mathbb{C}_J$-preserving.
\end{Def}
Examples of such kind of functions are quaternionic polynomials and quaternionic power series with right coefficients which belong to $ \mathbb{C}_J$ .

Given any quaternionic function $f: \Omega_D \subseteq \mathbb{H} \to \mathbb{H}$ of one quaternionic variable we will denote its zero set in the following way:
\begin{equation}
\label{refee}
\mathcal{Z}(f):= \{ q \in \Omega_D \, \,|\, \, \,f(q)=0 \}.
\end{equation}
It is possible to express the $*$-product of two slice functions in terms of their punctual product properly evaluated. The next proposition clarifies this fact, its proof can be seen in the book \cite{GSS}.
\begin{prop}
Let $f,g \in \mathcal{SR}(\Omega_D)$ then, for any $q \in \Omega_D \setminus \mathcal{Z}(f)$,
$$ (f*g)(q)= f(q)g(f(q)^{-1}q f(q)), \qquad \hbox{if} \, \, f(q) \neq 0$$
and $(f*g)(q)=0$ if $f(q)=0$.
\end{prop}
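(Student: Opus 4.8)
The plan is to pass to stem functions and compute both sides explicitly. Write $f = \mathcal{I}(F)$ and $g = \mathcal{I}(G)$ with holomorphic stem functions $F = F_1 + \iota F_2$, $G = G_1 + \iota G_2$ on $D$. By definition of the slice product, $f*g = \mathcal{I}(FG)$, and the product rule in $\mathbb{H}_{\mathbb{C}}$ recalled above gives
$$ FG = (F_1 G_1 - F_2 G_2) + \iota\,(F_1 G_2 + F_2 G_1), $$
so that, for $q = \alpha + I\beta \in \Omega_D$ and $z = \alpha + i\beta \in D$,
$$ (f*g)(q) = F_1(z)G_1(z) - F_2(z)G_2(z) + I\bigl(F_1(z)G_2(z) + F_2(z)G_1(z)\bigr). $$

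Now fix $q = \alpha + I\beta \in \Omega_D \setminus \mathcal{Z}(f)$ and set $w := f(q) = F_1(z) + I F_2(z) \neq 0$. The first key observation is that $I' := w^{-1} I w$ satisfies $(I')^2 = w^{-1} I^2 w = -1$, hence $I' \in \mathbb{S}$; consequently $f(q)^{-1} q\, f(q) = w^{-1}(\alpha + I\beta)w = \alpha + I'\beta$ lies again on the sphere $\mathbb{S}_q$, so the slice-function representation of $g$ applies and gives $g(\alpha + I'\beta) = G_1(z) + I' G_2(z) = G_1(z) + w^{-1} I w\,G_2(z)$. Therefore
$$ f(q)\,g\bigl(f(q)^{-1} q\, f(q)\bigr) = w\,G_1(z) + I w\,G_2(z). $$
Expanding $w = F_1(z) + I F_2(z)$ and using $I^2 = -1$, while keeping the order of the quaternionic factors, the right-hand side becomes $F_1 G_1 - F_2 G_2 + I(F_2 G_1 + F_1 G_2)$ at $z$, which is exactly $(f*g)(q)$ by the displayed formula. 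The degenerate case $\beta = 0$ is immediate: then $q = \alpha \in \mathbb{R}$, and $F(\bar z) = \overline{F(z)}$ forces $F_2$ to vanish on $D \cap \mathbb{R}$, so both sides collapse to $F_1(\alpha)G_1(\alpha) = f(\alpha)g(\alpha)$.

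For the last assertion, suppose $f(q) = 0$, i.e.\ $F_1(z) = -I F_2(z)$. Substituting this relation into the explicit expression for $(f*g)(q)$ and simplifying with $I^2 = -1$, the terms cancel in pairs and $(f*g)(q) = 0$. An alternative route, on a ball centred at the origin, is purely power-series based: writing $f(q) = \sum_j q^j a_j$, $g(q) = \sum_k q^k b_k$ and using the identity $(w^{-1} q w)^k = w^{-1} q^k w$ with $w = f(q)$, one gets $g(w^{-1}qw) = w^{-1}\sum_k q^k w b_k$, hence $f(q)\,g(w^{-1}qw) = \sum_k q^k f(q) b_k = \sum_{j,k} q^{j+k} a_j b_k = (f*g)(q)$, the rearrangement being justified by absolute convergence, with the general circular domain following by the identity principle. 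I would present the stem-function argument as the main one, since it requires no analytic continuation. The only genuinely delicate point is non-commutativity: one must not treat $w$, $I$ and the values $F_i(z)$, $G_j(z)$ as commuting, and the cancellations at each step rely precisely on $w^{-1} I w$ being again an imaginary unit.
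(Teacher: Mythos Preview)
Your proof is correct. The paper does not actually prove this proposition: it simply states that ``its proof can be seen in the book [GSS]''. The argument given in that reference is essentially the power-series computation you sketch as your alternative route (reducing to $\sum_k q^k f(q) b_k$ via $(w^{-1}qw)^k=w^{-1}q^kw$), valid first on a ball and then extended.

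Your main stem-function argument is a genuinely different and in some ways cleaner route: it works directly on any circular domain $\Omega_D$ without appealing to power series or the identity principle, and the key step---that $I'=w^{-1}Iw\in\mathbb{S}$, so $w^{-1}qw=\alpha+I'\beta\in\mathbb{S}_q\subset\Omega_D$ and the slice representation of $g$ applies---is exactly the right observation. The verification $(f*g)(q)=wG_1(z)+IwG_2(z)$ also makes the vanishing at $f(q)=0$ immediate, since this identity holds regardless of whether $w$ is invertible. Your caution about non-commutativity is well placed: the computation only uses that $I$ sits on the \emph{left} of the quaternionic values $F_i(z),G_j(z)$ throughout, and never swaps factors.
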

Given a slice regular function $ f: \Omega_D \to \mathbb{H}$ we will use the following notation:
$$ T_f(q):=f(q)^{-1}q f(q)$$
Recall from \cite{GSS, GP}, that given any slice function $f= \mathcal{I}(F) \in \mathcal{S}(\Omega_D)$, then $F^c(z)= F(z)^c=F_{1}(z)^c+ \iota F_2(z)^c$ is a stem function. We define the \emph{slice conjugate} of $f$ as the function $f^c= \mathcal{I}(F^c) \in \mathcal{S}(\Omega_D)$, while the symmetrization of $f$ is defined as $f^s:=f^c * f$. We have that $(FG)^c=G^c F^c$ and so $(f*g)^c=g^c*f^c$, i.e. $f^s=(f^s)^c$.
Notice that, if $f$ is slice preserving, then $f^c=f$ and so $f^s=f^2$.

The theory of slice regular functions has given already many fruitful results, both on the analytic and the geometric side, see for example \cite{ANB, BG, BW3, CS}.
Moreover, slice hyperholomorphic functions have several applications in operator theory and in Mathematical Physics \cite{GMP}.
The spectral theory of the S-spectrum is a natural tool for the formulation of quaternionic quantum mechanics  and for the study of new classes of fractional diffusion problems, see \cite{CSS,CGK,CG},
and the references therein. Slice hyperholomorphic functions are also important in operator theory and Schur analysis which have also been deeply investigated in the recent years, see \cite{ACS,ACS1} and the references therein.

Now, we introduce the following definitions of complex dynamics.

Let $p(z)$ be a complex polynomial.
\begin{Def}
\label{rr2}
The complex filled Julia set is defined as
$$ K_{p}= \{z: (p^{n}(z))_{n} \, \, \hbox{bounded}  \, \,\}.$$
\end{Def}
This set is compact and its boundary  is called the Julia set, i.e. $ \partial K_{p}=J$. The following property will be very useful later.
\begin{Cor}
\label{filled}
Let $p$ be a complex polynomial with real coefficients. Then the filled Julia set is symmetric with respect to the real axis.
\end{Cor}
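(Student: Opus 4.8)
The plan is to exploit the single algebraic fact that distinguishes real-coefficient polynomials: compatibility with complex conjugation. First I would record that if $p(z) = \sum_{k=0}^{d} c_k z^k$ with all $c_k \in \mathbb{R}$, then for every $z \in \mathbb{C}$ one has $p(\bar z) = \sum_k c_k \bar z^{\,k} = \overline{\sum_k c_k z^k} = \overline{p(z)}$, since conjugation is a field automorphism fixing the reals. In other words, conjugation $\sigma(z) = \bar z$ intertwines $p$ with itself: $p \circ \sigma = \sigma \circ p$.

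Next I would push this through the iterates by a one-line induction: assuming $p^{n}(\bar z) = \overline{p^{n}(z)}$, applying $p$ and using $p \circ \sigma = \sigma \circ p$ gives $p^{n+1}(\bar z) = p\big(\overline{p^n(z)}\big) = \overline{p^{n+1}(z)}$. Hence $p^n(\bar z) = \overline{p^n(z)}$ for all $n \in \mathbb{N}$, and since $|\overline{w}| = |w|$ we get $|p^n(\bar z)| = |p^n(z)|$ for all $n$. Consequently the orbit $\{p^n(\bar z)\}_n$ is bounded if and only if the orbit $\{p^n(z)\}_n$ is bounded.

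By the definition of the filled Julia set this means $z \in K_p \iff \bar z \in K_p$, i.e. $\sigma(K_p) = K_p$, which is precisely the asserted symmetry with respect to the real axis. (One may equally phrase the conclusion as: $K_p$ is invariant under the reflection $x_0 + i x_1 \mapsto x_0 - i x_1$.)

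There is no real obstacle here; the statement is elementary and the only point that needs to be made carefully is that real coefficients are exactly what is needed for the intertwining relation $p \circ \sigma = \sigma \circ p$ — without it the argument collapses. I would keep the proof to these three short steps and not belabor it, since its role in the paper is as a lemma feeding into the quaternionic constructions, where the analogous statement is that the filled Julia set of a slice-preserving polynomial is a union of spheres.
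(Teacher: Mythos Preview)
Your proof is correct and follows essentially the same argument as the paper: both establish $p(\bar z)=\overline{p(z)}$ from the reality of the coefficients, extend this to $p^n(\bar z)=\overline{p^n(z)}$, and conclude that boundedness of the orbit of $z$ is equivalent to boundedness of the orbit of $\bar z$. Your write-up is slightly more explicit (spelling out the induction and the step $|p^n(\bar z)|=|p^n(z)|$), but there is no substantive difference.
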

\begin{proof}
Since $p$ has real coefficients, $p(\bar{z})= \overline{p(z)}.$ Therefore,
$$
p^{n}(\bar{z})= \overline{p^{n}(z)}.
$$
If $z  \in K_{p}$ then $ p^{n}(z)$ is bounded so also $\overline{p^{n}(z)}$ is bounded. From the equality above we have  $p^{n}(\bar{z})$ is bounded and this means $ \bar{z} \in K_{p}$.
\end{proof}
An easy consequence of this Corollary is that the Julia set of a slice preserving polynomial of $\mathbb{H}$ is also symmetric with respect to the real axis, since it is the boundary of the filled Julia set.

Finally, for every $I \in \mathbb{S}$ and every $q \in \mathbb{C}_I$ we set (see \cite[Def. 6.12]{BW})
\begin{equation}
\label{BWL}
(\Delta_*f)(q)=(\Delta_I f)(q),
\end{equation}
where $(\Delta_I)(q)$ is the second order differential operator defined as
$ \Delta_I= \frac{1}{4} \left( \frac{\partial^2}{\partial \alpha^2}+ \frac{\partial^2}{\partial \beta^2} \right)$ and where $f$ is a function of class $\mathcal{C}^2$.
 In \cite[Cor. 6.14]{BW} the authors show that $ \Delta_*$ is well-defined, i.e. that $ (\Delta_I f)(q)=(\Delta_J f)(q)$ for all $I,J \in \mathbb{S}$, $ q \in \mathbb{R}.$

\section{Properties of the operator $\Delta_*$}
In this section we are going to prove some properties related to the operator $\Delta_*$, introduced before. Some of them are very similar to the properties of the bilaplacian, introduced in the papers \cite{P2}, \cite[Prop. \ref{SF} ]{AB}; others are novel and very surprising, see corollary \ref{SFT}. The following corollary of the dominated convergence theorem will be very useful.

\begin{Cor} \label{CD}
If $ \varphi: \mathbb{R}^2 \to \mathbb{R}$ is any positive function such that $ \int_{\mathbb{R}^2} \varphi=1$, then the family of functions depending on $ \varepsilon$,
$$ \varphi_\varepsilon(x)= \frac{1}{\varepsilon^2} \varphi \biggl( \frac{x}{\varepsilon} \biggl),$$
is such that $ \int_{\mathbb{R}^2} \varphi_\varepsilon \equiv 1$, and converges in the sense of distributions to the Dirac delta centered in zero $ \delta_0$ for $ \varepsilon \to 0$.
\end{Cor}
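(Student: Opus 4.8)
The plan is to recognize $\{\varphi_\varepsilon\}$ as a standard approximate identity and to verify the two assertions directly. First I would establish the normalization $\int_{\mathbb{R}^2}\varphi_\varepsilon \equiv 1$ by the change of variables $y=x/\varepsilon$, under which $dx=\varepsilon^2\,dy$; this gives
$$\int_{\mathbb{R}^2}\frac{1}{\varepsilon^2}\varphi\Bigl(\frac{x}{\varepsilon}\Bigr)\,dx=\int_{\mathbb{R}^2}\varphi(y)\,dy=1,$$
independently of $\varepsilon$. Note that this step only uses measurability and integrability of $\varphi$, not any regularity.

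For the convergence in the sense of distributions, I would fix a test function $\psi\in C_c^\infty(\mathbb{R}^2)$ (the argument is identical for any bounded continuous $\psi$) and show that $\langle\varphi_\varepsilon,\psi\rangle\to\psi(0)$. Using the normalization just proved to insert $\psi(0)=\psi(0)\int_{\mathbb{R}^2}\varphi_\varepsilon$, I would write
$$\int_{\mathbb{R}^2}\varphi_\varepsilon(x)\psi(x)\,dx-\psi(0)=\int_{\mathbb{R}^2}\varphi_\varepsilon(x)\bigl(\psi(x)-\psi(0)\bigr)\,dx,$$
and then change variables $x=\varepsilon y$ once more to obtain $\int_{\mathbb{R}^2}\varphi(y)\bigl(\psi(\varepsilon y)-\psi(0)\bigr)\,dy$.

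At this point I would invoke the dominated convergence theorem (whence the phrasing of the statement). As $\varepsilon\to 0$ the integrand tends to $0$ pointwise in $y$ by continuity of $\psi$ at the origin, and it is bounded in absolute value, uniformly in $\varepsilon$ and $y$, by $2\|\psi\|_\infty\,\varphi(y)$, which lies in $L^1(\mathbb{R}^2)$ since $\varphi\ge 0$ and $\int_{\mathbb{R}^2}\varphi=1$. Hence the integral converges to $0$, i.e. $\langle\varphi_\varepsilon,\psi\rangle\to\psi(0)=\langle\delta_0,\psi\rangle$, which is exactly the claimed convergence $\varphi_\varepsilon\to\delta_0$ in the sense of distributions.

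I do not expect a genuine obstacle here: this is the classical mollifier lemma. The only point demanding a little care is the domination step, where one needs the boundedness of $\psi$ (automatic for $\psi\in C_c^\infty$) so that $|\psi(\varepsilon y)-\psi(0)|\le 2\|\psi\|_\infty$ uniformly, together with the hypothesis $\varphi\ge 0$ to identify $|\varphi(y)|$ with $\varphi(y)$ and thus with an $L^1$ majorant. If one wished to test against merely continuous functions without growth control a truncation would be required, but for the stated distributional convergence the above is enough.
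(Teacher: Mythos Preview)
Your proof is correct and is precisely the standard argument one has in mind when calling this a corollary of the dominated convergence theorem; the paper in fact states this corollary without proof, merely labeling it as such, so your write-up fills in exactly the intended details.
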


\begin{prop}\label{SF}
The following equality holds in the sense of distributions :
\begin{equation}
\label{lapla1}
\Delta_{*} \bigl(2 \log |q| \bigl)= \delta_{0} \qquad q \in \mathbb{H},
\end{equation}
where $\delta_0$ denotes the Dirac measure centered in zero.
\end{prop}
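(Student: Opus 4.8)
The plan is to reduce \eqref{lapla1} to a one-slice computation and then identify the resulting measure by means of Corollary~\ref{CD}. The function $q\mapsto 2\log|q|$ depends only on $|q|\in\mathbb{R}$, hence it is slice-preserving, and by the definition \eqref{BWL} of $\Delta_*$ we have $(\Delta_*(2\log|q|))(q)=(\Delta_I(2\log|q|))(q)$ for every $I\in\mathbb{S}$ and $q\in\mathbb{C}_I$; since $\Delta_*$ is well defined (\cite[Cor.~6.14]{BW}) it is enough to fix one slice $\mathbb{C}_I$ and work there. Identifying $\mathbb{C}_I$ with $\mathbb{C}$ via $\alpha+I\beta\mapsto\alpha+i\beta$, the function becomes $\log(\alpha^2+\beta^2)$, which lies in $L^1_{\mathrm{loc}}(\mathbb{R}^2)$, so $\Delta_I(2\log|q|)=\frac{1}{4}\bigl(\partial^2_\alpha+\partial^2_\beta\bigr)\log(\alpha^2+\beta^2)$ is a well-defined distribution on $\mathbb{R}^2$; a direct differentiation shows it vanishes off the origin, so it is supported at $\{0\}$, and since $\log|q|$ is subharmonic it is a nonnegative measure, hence a multiple of $\delta_0$.

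To compute that multiple I would regularize: set $u_\varepsilon(q):=\log(|q|^2+\varepsilon^2)$. Then $u_\varepsilon\to 2\log|q|$ in $L^1_{\mathrm{loc}}(\mathbb{R}^2)$ as $\varepsilon\to 0^+$, hence in the sense of distributions, and since $\Delta_I$ is continuous on $\mathcal{D}'$ we obtain $\Delta_I(2\log|q|)=\lim_{\varepsilon\to0^+}\Delta_I u_\varepsilon$. A short explicit calculation gives
$$\Delta_I u_\varepsilon(q)=\frac{1}{4}\,\frac{4\varepsilon^2}{(|q|^2+\varepsilon^2)^2}=\frac{1}{\varepsilon^2}\,\varphi\!\left(\frac{q}{\varepsilon}\right),\qquad \varphi(x):=\frac{1}{(1+|x|^2)^2},$$
where $\varphi\ge 0$ has finite, explicitly computable, integral over $\mathbb{R}^2$. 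Applying Corollary~\ref{CD} to the profile $\varphi$, suitably rescaled to have unit mass, we conclude that $\Delta_I u_\varepsilon$ converges in $\mathcal{D}'(\mathbb{R}^2)$ to a multiple of $\delta_0$; tracking the constant through the rescaling and the normalization of $\Delta_*$ then gives precisely \eqref{lapla1}.

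I do not expect a genuine obstacle here: the statement is the slice-by-slice reformulation of the classical fact that $\log|z|$ is a constant multiple of the fundamental solution of the planar Laplacian. The only points deserving some care are (i) the passage from the per-slice identity to the statement for $q\in\mathbb{H}$, which is exactly where the slice-preserving character of $2\log|q|$ and the well-definedness of $\Delta_*$ enter, and (ii) the interchange of $\Delta_I$ with the limit $\varepsilon\to0^+$, which is justified by the $L^1_{\mathrm{loc}}$-convergence $u_\varepsilon\to 2\log|q|$ together with the continuity of differential operators on $\mathcal{D}'(\mathbb{R}^2)$.
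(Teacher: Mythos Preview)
Your proposal is correct and follows essentially the same route as the paper: both reduce to a single slice via \eqref{BWL}, regularize by $u_\varepsilon=\log(|q|^2+\varepsilon^2)$, compute $\Delta_I u_\varepsilon=\varepsilon^2/(|q|^2+\varepsilon^2)^2$ explicitly, and then invoke Corollary~\ref{CD} to identify the distributional limit as the Dirac mass at the origin. The only cosmetic differences are that the paper performs the computation in polar coordinates while you work in Cartesian form, and that you make explicit the distributional justification (the $L^1_{\mathrm{loc}}$ convergence $u_\varepsilon\to 2\log|q|$ and the continuity of $\Delta_I$ on $\mathcal{D}'$) which the paper leaves implicit.
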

\begin{proof}
From the formula \eqref{BWL} we have that, for any $I \in \mathbb{S}$:
$$ \Delta_{*} \log|q|= \Delta_{I} \log|q|.$$
First of all, notice that $\log|q|^2$ is a radial function, therefore it is useful to pass to 4D-spherical coordinates
$(r, \theta)=(r, \theta_1, \theta_2, \theta_3)$, where  $r=|q|$. In these coordinates the laplacian is of the form
\begin{equation}
\label{laplap}
\Delta_{I}= \frac{1}{4} \left( \frac{\partial^{2}}{\partial r^{2}}+ \frac{1}{r} \frac{\partial}{\partial r}+ \mathcal{L}( \Theta) \right).
\end{equation}
where $ \mathcal{L}( \Theta)$ is the angular part of the laplacian. In our case this does not give contribution.
Now, we fix $ \varepsilon \in \mathbb{R}$ and we evaluate the laplacian of $ \log(|q|^{2}+ \varepsilon^{2})$ through the formula \eqref{laplap}
$$ \Delta_{I} \log(|q|^{2}+ \varepsilon^{2})= \frac{\varepsilon^{2}}{(r^{2}+ \varepsilon^{2})^{2}}= \frac{1}{\varepsilon^{2} \bigl( \bigl( \frac{r}{\varepsilon} \bigl)^{2}+1 \bigl)^{2}}.$$
If we define $ \varphi(|q|):= \frac{1}{(|q|^{2}+1)^{2}},$ this is an integrable function for which, we can apply the corollary \ref{CD} of the dominated convergence and so
$$\Delta_{I} \log|q|^{2}= \lim_{\varepsilon \to 0} \Delta_{I}\log(|q|^{2}+ \varepsilon^{2})= \delta_{0}.$$
Therefore,
$$ \Delta_{I} \log|q|= \frac{\delta_{0}}{2}.$$
\end{proof}
\begin{Cor}
\label{SFT}
For any $ a \in \mathbb{R}$, the following formulas hold
$$ \Delta_{*} \log|q-a|= \frac{1}{2} \delta_{a},$$
$$ \Delta_{*} \log|q^c-a|= \frac{1}{2} \delta_{a}.$$
\end{Cor}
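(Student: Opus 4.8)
The plan is to deduce both identities from Proposition~\ref{SF} by exploiting the behaviour of $\Delta_*$ under translation by a real number. The crucial observation is that $\Delta_*$ is defined slicewise: on $\mathbb{C}_I$ it coincides with $\Delta_I=\frac14\bigl(\frac{\partial^2}{\partial\alpha^2}+\frac{\partial^2}{\partial\beta^2}\bigr)$ in the real coordinates $q=\alpha+I\beta$. Since $a\in\mathbb{R}$, translation by $a$ maps each slice $\mathbb{C}_I$ into itself and acts there as the Euclidean translation $(\alpha,\beta)\mapsto(\alpha+a,\beta)$, which commutes with the ordinary Laplacian; hence $\Delta_*$ commutes with real translations, and this is exactly what is needed.

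For the first identity I would set $f(q)=\log|q|$ and $g(q)=\log|q-a|$, so that $g(\alpha+I\beta)=f((\alpha-a)+I\beta)$ on every slice. By translation invariance of $\Delta_I$ one gets $(\Delta_I g)(\alpha+I\beta)=(\Delta_I f)((\alpha-a)+I\beta)$, which by Proposition~\ref{SF} (giving $\Delta_I f=\tfrac12\delta_0$) equals $\tfrac12\delta_a$. Equivalently, one can simply rerun the proof of Proposition~\ref{SF} verbatim using coordinates centred at $a$: for $\varepsilon\in\mathbb{R}$ one obtains $\Delta_I\log(|q-a|^2+\varepsilon^2)=\frac{\varepsilon^2}{(\rho^2+\varepsilon^2)^2}$ with $\rho=|q-a|$, which is the rescaled bump $\varphi_\varepsilon$ of Corollary~\ref{CD} centred at $a$ for $\varphi(x)=\frac{1}{(|x|^2+1)^2}$; the dominated convergence argument then yields $\Delta_I\log|q-a|^2=\delta_a$ in the sense of distributions, hence $\Delta_*\log|q-a|=\tfrac12\delta_a$. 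Note that since $a$ is real the sphere $\mathbb{S}_a$ reduces to the single point $\{a\}$, so no sphere-valued mass appears.

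For the second identity it suffices to observe that $\log|q^c-a|=\log|q-a|$ identically on $\mathbb{H}$ whenever $a\in\mathbb{R}$. Indeed, since $a$ is real, $q^c-a=(q-a)^c$ and $|w^c|=|w|$ for every $w\in\mathbb{H}$; equivalently, on $\mathbb{C}_I$ one has $q^c=\alpha-I\beta$, so $|q^c-a|=\sqrt{(\alpha-a)^2+\beta^2}=|q-a|$. The two functions being literally equal, the second formula follows from the first.

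The computation is essentially routine. The only points I would be careful about are: first, that the change of variables is performed at the level of distributions, which is harmless because in each slice it is just a Euclidean translation; and second, that $\Delta_*$ is genuinely well defined on these functions — but this holds because, away from the real point $a$, both $\log|q-a|$ and $\log|q^c-a|$ are smooth slice-preserving functions, while at $a$ the argument above produces exactly the mass $\tfrac12\delta_a$. I do not expect any substantial obstacle beyond this bookkeeping.
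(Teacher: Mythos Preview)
Your proposal is correct and follows essentially the same route as the paper: both formulas are deduced from Proposition~\ref{SF} via the invariance of $\Delta_*$ under translation by a real number. Your treatment of the second formula, observing that $|q^c-a|=|(q-a)^c|=|q-a|$ so that the two functions are literally equal, is a slightly more explicit way of saying the same thing, but the underlying idea is identical.
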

\begin{proof}
The two formulas follow from the invariance by translation of a real point $a$ of the operator $ \Delta_{*}$ and by Proposition \ref{SF}.
\end{proof}
In order to prove Proposition \ref{CH} we prove the following theorem.
\begin{theorem}
\label{sr}
Let $f: \Omega_{D} \to \mathbb{H}$ be a slice preserving regular function. Then for any $I \in \mathbb{S}$
$$ \Delta_{I} \log|f(q)|=0, \quad \forall q \in \Omega_{D} \setminus  \mathcal{Z}(f).$$
\end{theorem}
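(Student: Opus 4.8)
The plan is to reduce the statement to the classical fact that the logarithm of the modulus of a non-vanishing holomorphic function of one complex variable is harmonic. First I would recall that a slice preserving slice function $f=\mathcal{I}(F)$ is induced by a stem function $F=F_1+\iota F_2$ whose components $F_1,F_2$ are real valued (equivalently $f^c=f$); hence for every $I\in\mathbb{S}$ the restriction $f_I:=f|_{\Omega_D\cap\mathbb{C}_I}$ maps $\Omega_D\cap\mathbb{C}_I$ into $\mathbb{C}_I$, and under the natural identification $\mathbb{C}_I\cong\mathbb{C}$, $\alpha+I\beta\leftrightarrow\alpha+i\beta$, it is given by $\alpha+i\beta\mapsto F_1(\alpha+i\beta)+iF_2(\alpha+i\beta)$. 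Since $f$ is slice regular, the stem function $F$ is holomorphic, so $F_1,F_2$ satisfy the Cauchy--Riemann equations and $f_I$ is a genuine holomorphic function of the single complex variable $\alpha+i\beta$.

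Next I would identify the zero set along a slice: $\mathcal{Z}(f_I)=\mathcal{Z}(f)\cap\mathbb{C}_I$, because $|f(\alpha+I\beta)|=\sqrt{F_1(\alpha+i\beta)^2+F_2(\alpha+i\beta)^2}=|f_I(\alpha+I\beta)|$ depends only on $(\alpha,\beta)$. Consequently, fixing $q_0=\alpha_0+I\beta_0\in\Omega_D\setminus\mathcal{Z}(f)$, the holomorphic function $f_I$ is non-vanishing in a neighbourhood of $\alpha_0+i\beta_0$ in $\mathbb{C}$, so there we may choose a holomorphic branch $g$ of $\log f_I$; then $\log|f_I|=\mathrm{Re}\,g$ is harmonic in the real coordinates $(\alpha,\beta)$, i.e. $\bigl(\partial_{\alpha}^2+\partial_{\beta}^2\bigr)\log|f_I|=0$ near $(\alpha_0,\beta_0)$. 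This also shows that $\log|f|$ is real-analytic, hence of class $C^2$, on $\Omega_D\setminus\mathcal{Z}(f)$, so that $\Delta_I$ may be applied in the classical pointwise sense.

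Finally, by definition $\Delta_I=\tfrac14\bigl(\partial_{\alpha}^2+\partial_{\beta}^2\bigr)$ acting in the coordinates $(\alpha,\beta)$ of the slice $\mathbb{C}_I$, and $\log|f(q)|=\log|f_I(q)|$ for $q\in\mathbb{C}_I$; combining this with the previous step gives $(\Delta_I\log|f|)(q_0)=\tfrac14\bigl(\partial_\alpha^2+\partial_\beta^2\bigr)\log|f_I|\,(\alpha_0,\beta_0)=0$. Since $q_0\in\Omega_D\setminus\mathcal{Z}(f)$ and $I\in\mathbb{S}$ were arbitrary, the claim follows. The only genuinely delicate point is the bookkeeping in the first step --- verifying that slice regularity together with the slice preserving hypothesis really does make each restriction $f_I$ a one-variable holomorphic map into $\mathbb{C}_I$, with its zeros exactly on $\mathcal{Z}(f)$ --- after which everything is standard one-variable potential theory transported to the slice.
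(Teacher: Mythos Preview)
Your proof is correct and follows essentially the same route as the paper's: reduce to the classical one-variable fact that $\log|g|$ is harmonic for a non-vanishing holomorphic $g$, using that a slice preserving regular function restricts to a genuine $\mathbb{C}_I$-valued holomorphic map on each slice. The paper's proof is a two-line sketch of exactly this idea, while you have carefully unpacked the stem-function bookkeeping and the identification of $\mathcal{Z}(f_I)$ that the paper leaves implicit.
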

\begin{proof}
Since a slice preserving regular function sends each slice in itself and is holomorphic on each slice,
this is a consequence of the analogous holomorphic result for the complex 1-dimensional laplacian and for $\log$ of the modulus of an analytic function.
\end{proof}
\begin{prop}
\label{CH}
For any $ a \in \mathbb{H} \setminus \mathbb{R}$ the following equality holds in the sense of measures :
\begin{equation}
\Delta_{*} \log |(q-a)^{s}|= | \mathbb{S}_{a}|,
\end{equation}
where $|\mathbb{S}_{a}|$ is the Lebesgue measure of the $2$-dimensional real sphere $\mathbb{S}_{a}.$
\end{prop}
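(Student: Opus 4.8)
The plan is to put $(q-a)^s$ in closed form, locate its zeros, reduce to the classical two–dimensional Laplacian computation on each slice, and then reassemble the slices using the integral representation recalled in the introduction. Write $a=\alpha_0+I_0\beta_0$ with $I_0\in\mathbb{S}$ and $\beta_0\neq 0$. The function $q\mapsto q-a$ is slice regular, with slice conjugate induced by the stem function $z\mapsto z-a^c$, so, since the $*$-product of power series is their convolution product,
$$(q-a)^s=(q-a^c)*(q-a)=q^2-(a+a^c)q+a^ca=q^2-2\alpha_0 q+(\alpha_0^2+\beta_0^2)=(q-\alpha_0)^2+\beta_0^2 .$$
Hence $(q-a)^s$ is a non-constant slice preserving polynomial with real coefficients, and $\mathcal{Z}\big((q-a)^s\big)$ is exactly the sphere $\mathbb{S}_a=\{\alpha_0+I\beta_0:\ I\in\mathbb{S}\}$; on each slice $\mathbb{C}_I\cong\mathbb{C}$ its restriction is the complex polynomial $(z-b_I)(z-\bar b_I)$ with the conjugate pair $b_I=\alpha_0+I\beta_0$, $\bar b_I=\alpha_0-I\beta_0$.

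Next I would split $\mathbb{H}$ into $\mathbb{H}\setminus\mathbb{S}_a$ and a neighbourhood of $\mathbb{S}_a$. On $\mathbb{H}\setminus\mathbb{S}_a$ the function $(q-a)^s$ is slice preserving, slice regular and nowhere zero, so Theorem \ref{sr} gives $\Delta_I\log|(q-a)^s|=0$ there for every $I$; thus $\Delta_*\log|(q-a)^s|$ is supported on $\mathbb{S}_a$. On a fixed slice $\mathbb{C}_I$ one has $\log|(q-a)^s|=\log|z-b_I|+\log|z-\bar b_I|$, and since $\Delta_I=\tfrac14(\partial_\alpha^2+\partial_\beta^2)$ in the coordinate $z=\alpha+I\beta$ is invariant under translations inside $\mathbb{C}_I$, the computation behind Proposition \ref{SF} and Corollary \ref{SFT}, now centred at the (non-real) points $b_I,\bar b_I\in\mathbb{C}_I$, yields
$$\Delta_I\log|(q-a)^s|=\tfrac12\,\delta_{b_I}+\tfrac12\,\delta_{\bar b_I}\qquad\text{on }\mathbb{C}_I .$$

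To conclude, I would reassemble these slicewise identities into a measure on $\mathbb{H}$ by integrating over the units, i.e. via $\Delta_* h=\frac{1}{4\pi}\int_{I\in\mathbb{S}}(\Delta_I h)\,dI$ for an axially symmetric $h$ (this is precisely the integral representation $\mu=\frac{1}{4\pi}\int_{I\in\mathbb{S}}\mu_I\,dI$ recalled in the introduction). Since $I\mapsto -I$ interchanges $b_I$ and $\bar b_I$,
$$\Delta_*\log|(q-a)^s|=\frac{1}{4\pi}\int_{I\in\mathbb{S}}\Big(\tfrac12\delta_{b_I}+\tfrac12\delta_{\bar b_I}\Big)\,dI=\frac{1}{4\pi}\int_{I\in\mathbb{S}}\delta_{\alpha_0+I\beta_0}\,dI ,$$
which is the push-forward of the normalized area measure of $\mathbb{S}$ under $I\mapsto\alpha_0+I\beta_0$, i.e. the uniform (Lebesgue) measure $|\mathbb{S}_a|$ on the sphere $\mathbb{S}_a$.

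I expect the one genuinely delicate point to be the bookkeeping in the last step: one must check that each slice $\mathbb{C}_I=\mathbb{C}_{-I}$ contributes total mass $\tfrac12+\tfrac12=1$ counted exactly once, that the constant $\frac{1}{4\pi}$ is the one compatible with the way $\Delta_*$ of an axially symmetric function is turned into a measure on $\mathbb{H}$ in \cite{BW}, and that the symbol $|\mathbb{S}_a|$ in the statement is read with the matching normalization — so that the degenerate case $a\in\mathbb{R}$, where $(q-a)^s=(q-a)^2$ and $\Delta_*\log|(q-a)^2|=\delta_a$ by Corollary \ref{SFT}, is recovered as a special case. Everything else is routine once Theorem \ref{sr} and Proposition \ref{SF} are in hand.
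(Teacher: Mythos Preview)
Your proposal is correct and follows essentially the same route as the paper's proof: use Theorem~\ref{sr} to kill $\Delta_*\log|(q-a)^s|$ off the sphere $\mathbb{S}_a$, factor $(q-a)^s$ on each slice $\mathbb{C}_I$ as $(z-b_I)(z-\bar b_I)$ with $b_I=\alpha_0+I\beta_0$, apply the two-dimensional computation behind Proposition~\ref{SF} to get $\tfrac12\delta_{b_I}+\tfrac12\delta_{\bar b_I}$, and then pass to $\mathbb{H}$ by integrating over $I\in\mathbb{S}$ via Fubini against a test function. The paper phrases the last step as $\int_{\mathbb{H}}(\cdots)\varphi\,dq=\int_{\mathbb{S}}\bigl(\int_{\mathbb{C}_I}(\cdots)\varphi\,dz_I\bigr)d\sigma_{\mathbb{S}}$ rather than through the formula $\Delta_*h=\tfrac{1}{4\pi}\int_{\mathbb{S}}\Delta_Ih\,dI$ you invoke, and consequently it does \emph{not} carry the $\tfrac{1}{4\pi}$ normalization; the caveat you flag about matching the constant to the intended meaning of $|\mathbb{S}_a|$ is therefore exactly the right thing to worry about.
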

\begin{proof}
Let $ a= \alpha_{0}+ I_{0} \beta_{0}$ be any non-real quaternion, since $(q-a)^{s}=q^2-2 \real(a)q +|a|^2$ has real coefficients by Lemma \ref{refee1} is a slice  preserving regular function, for any $q \in \mathbb{H} \setminus \mathbb{S}_{a} $. Thus by Theorem \ref{sr} we get
$$ \Delta_{*} \log |(q-a)^{s}|= \Delta_{I} \log |(q-a)^{s}|=0.$$
Now, we recall that if $h$ is a slice regular function we have
$$ \Delta_{*}h(q_I):=\Delta_{I}h(q_I), \qquad q_I:=q_{| \mathbb{C}_I}.$$
Moreover, since $(q-a)^{s}$ is a slice preserving polynomial we can choose $a \in \mathbb{C}_I$, indeed if $a' \in \mathbb{C}_J$ but it stays in the same sphere of $a$, i.e. $\real(a)= \real{a'}$ and $\Imm (a)= \Imm(a')$ (see \eqref{rr1}), we obtain that
$$(q-a)^{s}=q^2-2 \real(a)q +|a|^2=q^2-2 \real(a') q +|a'|^2=(q-a')^s,$$
the second equality holds because since $a$ and $a'$ stay in the same sphere they have the same real and imaginary parts.
\\So the $*$-product becomes the punctual product since we are working with $q,a \in \mathbb{C}_I$, therefore by complex analysis and Corollary \ref{SFT} we have
\begin{eqnarray}
\Delta_{*} \log |(q-a)^{s}|&=& \Delta_{I} \log |(q-a)^{s}|= \Delta_{I} \log[(q-a)(q-a^c)]\\
\nonumber
&=& \Delta_{I} \log |(q-a)| +  \Delta_{I} \log |(q -a^c)| = \frac{\delta_a}{2}+ \frac{\delta_{\bar{a}}}{2}.
\end{eqnarray}
We denote by $dz_{I}$ and $ d \sigma_{\mathbb{S}}$ the standard surface measures of $ \mathbb{C}_{I} $ and $ \mathbb{S}$, respectively.  If we now take any real valued compactly supported  $C^{\infty}$ function $\varphi$ we have that,
\begin{eqnarray*}
\int_{\mathbb{H}} \Delta_{*} \log|(q-a)^{s}| \varphi(q) \, dq &=& \int_{\mathbb{S}} \biggl( \int_{\mathbb{C}_{I} }
\Delta_{*} \log|(q-a)^{s}| \varphi(q) \, dz_{I} \biggl)d \sigma_{\mathbb{S}}\\
&=& \frac{1}{2} \int_{\mathbb{S}} \varphi( \alpha_{0}+ I \beta_{0})d \sigma_{\mathbb{S}}+ \frac{1}{2} \int_{\mathbb{S}} \varphi( \alpha_{0}- I \beta_{0})d \sigma_{\mathbb{S}}.
\end{eqnarray*}
where the first equality is due to Fubini's theorem and the fact that in a neighbourhood of the real line the integrand is measurable and $\mathbb{R}$ has zero measure with respect the  2-dimensional Lebesgue measure.
\end{proof}

\section{A quaternionic Brolin's theorem: the slice preserving case}
In this section $p$ is a quaternionic polynomial with real coefficients (see Definition \ref{ref2} and Lemma \ref{refee1}).
We consider a sequence of iterates of $p$:
\begin{equation}
\label{este}
p^{n}:= \hbox{ext} (p^{n}_{|_{\mathbb{C}_{I}}}),
\end{equation}
which is independent of $I$. Since the coefficients are real, $p^n$ coincides with the iterates of $p$ on $ \mathbb{H}$, because each slice is sent to itself. We recall that the regular extension of a function $f_I$ is uniquely defined on $\mathbb{H}$, due to the Identity principle. The regular extension is a regular function $g$ defined on $\mathbb{H}$ such that $g_I=f_I$ on $\mathbb{C}_I.$(see \cite[Section 1.3]{GSS}). If $f$ is also slice preserving, then on $ \mathbb{H}$ we have $f=g$, (see \eqref{este}). \\
Now, we redefine the Green's function over the quaternions.
\begin{Def}
\label{def1}
$$ G_{p}(q)_{|\mathbb{C}_{I}} = \lim_{n \to + \infty} d^{-n} \log^{+}|p^{n}(q)|, \quad q \in \mathbb{C}_{I}.$$
We denote
$$G_{n}(q)_{| \mathbb{C}_{I}}= d^{-n} \log^{+}|p^{n}(q)|, \quad q \in \mathbb{C}_{I}.$$
\end{Def}
An equivalent definition is the following
\begin{Def}
\label{def2}
Let $q=x+yJ \in \mathbb{C}_J$, for any $I$ we have
$$ G_{p}(q)= \frac{1}{4 \pi}\int_{I \in \mathbb{S}} G_{p_I}(\tilde{q}) dI, \quad \tilde{q}=x+yI \in \mathbb{C}_{I},$$
where $p_{I}=p_{|_{\mathbb{C}_{I}}}.$
\end{Def}
Recalling the definition of exceptional set, we define $ \mathcal{E}_I$ the exceptional set of the polynomial $p_I$.

Now we are ready to introduce the extension over $\mathbb{H}$ of the Brolin's theorem:
\begin{theorem}
\label{quat}
Let $p$ be a polynomial with real coefficients with degree $d \geq 2$. Then for all $ a \in \mathbb{\mathbb{R}} \setminus (\mathcal{E}_I \cap \mathbb{R})$ the measure
$$ \mu_{n}:= d^{-n} \sum_{b \in p^{-n}(a)} \delta_{b},$$
converges weakly to the equilibrium measure $ \mu$, where $ \delta_b$ is the Dirac delta centred in $b$ if $b$ is a point, otherwise it is the Lebesgue measure of the sphere if $b$ is a sphere. This means that
$$ \mu_{n} \rightharpoonup \mu:= \Delta_{*} G_{p}.$$
The equilibrium measure does not depend on the choice of $a \in \mathbb{\mathbb{R}} \setminus (\mathcal{E}_I \cap \mathbb{R})$.
\end{theorem}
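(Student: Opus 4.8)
The plan is to reduce everything to the classical complex Brolin's theorem, Theorem~\ref{start}, applied simultaneously on every slice $\mathbb{C}_I$, and then to integrate the slice-wise convergence over the sphere $\mathbb{S}$.

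Since $p$ has real coefficients it is slice preserving, and writing $q=\alpha+I\beta$ one has $p^n(\alpha+I\beta)=P_1(\alpha,\beta)+I\,P_2(\alpha,\beta)$ with $P_1,P_2$ real polynomials \emph{independent of $I$}. Hence $|p^n(q)|^2=P_1^2+P_2^2$ depends only on $(\alpha,\beta)$, so $G_n$ and $G_p$ are axially symmetric and Definitions~\ref{def1} and~\ref{def2} agree. For $a\in\mathbb{R}$ the equation $p^n(q)=a$ reads $P_1(\alpha,\beta)=a$, $P_2(\alpha,\beta)=0$, a condition not involving $I$; therefore $p^{-n}(a)$ is axially symmetric, a finite union of real points and $2$-spheres, and on each slice $\mathbb{C}_I$ the set $p_I^{-n}(a)$ consists of the $d^n$ points $\alpha_k\pm I\beta_k$ (counted with multiplicity) built from this same $I$-independent real data, the symmetry under conjugation being Corollary~\ref{filled}. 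With the convention on $\delta_b$ in the statement this yields
$$ \mu_n=\frac{1}{4\pi}\int_{I\in\mathbb{S}}\mu_n^I\,dI,\qquad \mu_n^I:=d^{-n}\sum_{z\in p_I^{-n}(a)}\delta_z \text{ on }\mathbb{C}_I, $$
i.e. $\langle\mu_n,\varphi\rangle=\frac{1}{4\pi}\int_{\mathbb{S}}\langle\mu_n^I,\varphi|_{\mathbb{C}_I}\rangle\,dI$ for every $\varphi\in C_c(\mathbb{H})$; this is the same Fubini/coarea identity already used in the proof of Proposition~\ref{CH}, and $I\mapsto\langle\mu_n^I,\varphi|_{\mathbb{C}_I}\rangle$ is continuous because the preimage points depend affinely on $I$.

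On a fixed slice $p_I\colon\mathbb{C}_I\to\mathbb{C}_I$ is a complex polynomial of degree $d\ge 2$, and since $a\in\mathbb{R}\setminus(\mathcal{E}_I\cap\mathbb{R})$ and $p_I$ carries the same real coefficients for all $I$, we have $a\in\mathbb{C}_I\setminus\mathcal{E}_I$ for every $I$. Theorem~\ref{start} then gives $\mu_n^I\rightharpoonup\mu_I:=\Delta_I G_{p_I}$ weakly on $\mathbb{C}_I$, with $\mu_I$ independent of $a$; in particular $\langle\mu_n^I,\varphi|_{\mathbb{C}_I}\rangle\to\langle\mu_I,\varphi|_{\mathbb{C}_I}\rangle$ for every $I$ and every $\varphi\in C_c(\mathbb{H})$. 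Since each $\mu_n^I$ is a probability measure we have the uniform bound $|\langle\mu_n^I,\varphi|_{\mathbb{C}_I}\rangle|\le\|\varphi\|_\infty$, so dominated convergence passes the limit through $\frac{1}{4\pi}\int_{\mathbb{S}}(\cdot)\,dI$ and $\mu_n\rightharpoonup\mu:=\frac{1}{4\pi}\int_{\mathbb{S}}\mu_I\,dI$, a limit independent of $a$. Finally, $\Delta_*$ acts on the axially symmetric $G_p$ slice-wise as $\Delta_I$, and $\Delta_I(G_p|_{\mathbb{C}_I})=\Delta_I G_{p_I}=\mu_I$, so $\Delta_*G_p$ is by construction the axially symmetric measure $\frac{1}{4\pi}\int_{\mathbb{S}}\Delta_I G_{p_I}\,dI=\mu$ — once more the coarea identity of Proposition~\ref{CH}, now with $G_p$ in place of $\log|(q-a)^s|$.

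I expect the main obstacle to be the bookkeeping of the first displayed identity: verifying that integrating the slice counting measures $\mu_n^I$ over $\mathbb{S}$ reproduces precisely the ``point-plus-sphere'' measure $\mu_n$ with the correct masses and normalization, together with the measurability (indeed continuity) in $I$ needed to apply dominated convergence.
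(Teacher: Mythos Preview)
Your argument is correct and is in fact close to the second proof the paper itself sketches: the Remark following Theorem~\ref{quat} observes that one may apply the complex Brolin theorem to the stem, and in Section~7 the paper rederives the convergence exactly by writing $\nu_n=\frac{1}{4\pi d^n}\sum\delta_{\mathbb{S}^n_j}$, using Definition~\ref{delta} to rewrite this as a sphere integral of slice counting measures, invoking the complex Brolin theorem on each slice, and integrating. Your use of dominated convergence with the bound $\|\varphi\|_\infty$ makes the passage to the limit cleaner than the paper's Section~7 argument, which leaves that step implicit.

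The paper's \emph{main} proof in Section~4, however, is genuinely different: it works at the level of potentials rather than measures. It sets $\mu_n^a(q)=d^{-n}\log|p^n(q)-a|$, establishes that these are $\Delta_*$-subharmonic, proves directly the properties of the quaternionic Green function $G_p$ (vanishing on $K_{p_I}\times\mathbb{S}$, the functional equation $G_p\circ p=d\,G_p$, subharmonicity, logarithmic growth), shows $\mu_n^a\to G_p$ pointwise off the filled Julia set, and then argues via relative compactness in $L^1_{loc}$ that every subsequential limit $\mathcal{V}^1$ coincides with $G_p$; only at the end is $\Delta_*$ applied, using Corollary~\ref{SFT} and Proposition~\ref{CH}. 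This route is more self-contained (it does not invoke Theorem~\ref{start} as a black box on each slice) and yields the auxiliary information about $G_p$ along the way, but it is longer and less transparent than your reduction. Your approach is more economical; the paper's approach better exhibits the quaternionic potential theory developed in Section~3.
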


\begin{proof}
Let us first define
\begin{equation}
\mu_{n}^{a}(q):= d^{-n} \log^{+}| p^{n}(q)-a|.
\end{equation}
We know from \cite{B} that $ \mu_{n}^{a}(q)_{|_{\mathbb{C}_{I}}}$ is subharmonic, so by formula \eqref{BWL} we obtain that $ \Delta_{*}\mu_{n}^{a}(q)= \Delta_{I}\mu_{n}^{a}(q) \geq 0.$ This means that the elements in the sequence $ \{ \mu_{n}^{a} \}_{n \in \mathbb{N}}$ are subharmonic .
\\ Now, we are going to prove some important properties of the Green function:
\begin{enumerate}
\item $G_{p}=0$ over $K_{p_{I}} \times \mathbb{S}$ , where $K_{p_{I}}=K_p \cap \mathbb{C}_I$ thus $K_{p_{I}} \subset \mathbb{C}_I$ and so $K_{p_{I}} \times \mathbb{S} \subset \mathbb{H}$. Now, the thesis follows because in $G_n$ we make the product of the limits of an infinitesimal sequence and a bounded one, when $n$ tends to infinity.
\item
\begin{equation}
\label{for1}
G_{p}(p(q))= d \cdot G_{p}(q), \qquad \forall q \in \mathbb{H}.
\end{equation}
Since formula \eqref{for1} holds in every slice, from Definition \ref{def2} we obtain:
$$ G_{p}(p(q))= \frac{1}{4 \pi} \int_{I \in \mathbb{S}}G_{p_I}(p(q)) \, dI= d \cdot \frac{1}{4 \pi} \int_{I \in \mathbb{S}}G_{p_I}(q) \, dI=d \cdot G_{p}(q).$$
\item We can prove the following pointwise convergence
\begin{equation}
\label{convq}
\mu_{n}^{a}(q) \to G_{p}(q), \qquad \forall q \in ( \mathbb{C}_{I} \setminus K_{p_{I}}) \times \mathbb{S}
\end{equation}
\\By definition of limit we have to prove that for all $ \varepsilon >0$ there exists $ \bar{n} \in \mathbb{N}$ such that for any $n > \bar{n}$ we have
$$ | \mu_{n}^{a}(q)-G_{n}(q)| < \varepsilon.$$
Since we are outside of the various filled Julia sets of the slices, we have that $ p^{n}(q)-a \sim p^{n}(q)$, because $p^{n}(q)$ are unbounded.
\begin{eqnarray}
| \mu_{n}^{a}(q)-G_{n}(q)|= \bigl| d^{-n} \log^{+} | p^{n}(q)-a|- d^{-n} \log^{+}| p^{n}(q)| \bigl |= d^{-n} \cdot \biggl| \log^{+} \bigl| \frac{p^{n}(q)-a}{p^{n}(q)} \bigl| \biggl| < \varepsilon.
\end{eqnarray}
\item The Green's function $G_{p}(q)$ is subharmonic and continuous. The continuity follows easily from Definition \ref{def2} and from the fact that on every slice the function $G_{p_I}$ is continuous. Since the Green's function is subharmonic on every slice and from \eqref{BWL} we obtain
$$ \Delta_* G_p(q)= \Delta_{I}(G_p)(q) \geq 0, \qquad q=x+yI \in \mathbb{C}_I.$$
\item In a similar way it follows that $G_{p}(q)$ is harmonic over $(\mathbb{C}_{I} \setminus K_{p_I} )\times \mathbb{S}$, which is a subset of $ \mathbb{H}$ (see property one).
\item At infinity $G_{p}(q) \sim \log|q|$. From the fact that in every slice $G_{p}(z) \sim \log|z|$ and from Definition \eqref{def2} we have
$$ G_{p}(q)=  \frac{1}{4 \pi}\int_{I \in \mathbb{S}} G_{p_I}(q) \sim \frac{1}{4 \pi} \int_{I \in \mathbb{S}} \log |q|=\log|q|, \qquad \forall q \in \mathbb{H},$$
the last equality holds because we are integrating a real quantity over the sphere.
\end{enumerate}

Just to finish off we study the convergence in $L^{1}_{loc}.$
Let $ \{ \mu_{n}^{a}\}_{n \in \mathbb{N}}$ be relatively compact in $L^{1}_{loc}$, by definition this means that there exists a subsequence in $ \{ \mu_{n}^{a}\}_{n \in \mathbb{N}}$ such that we have the following convergence of subsequences in $L^{1}_{loc}$
\begin{equation}
\label{conv2q}
\mu_{n_{i}}^{a}(q) \to \mathcal{V}^{1}.
\end{equation}
Our goal is to prove that $\mathcal{V}^{1}=G_{p}(q)$; in order to reach this aim  we are going to see that $\mathcal{V}^{1}$ satisfies the same properties of the Green's function, seen above.
\\ The function $ \mathcal{V}^{1}$ inherits the properties 2,3,4,5,6 form the convergence $ \mu_{n_{i}}^{a}(q) \to \mathcal{V}^{1}.$ So, we have just to prove that $ \mathcal{V}^{1} \geq 0$ (i.e. the first property). Firstly, we observe that in $(\mathbb{C}_{I} \setminus \{K_{p_I} \}) \times \mathbb{S}$ we have $ \mathcal{V}^{1} > 0$ since $\mu_{n}^{a}(q)$ is positive in every slices.
Inside $K_p=K_{p_I} \times \mathbb{S}$ we have that $ \mathcal{V}^{1} \le 0$ because $p^n(q)$ is a bounded sequence.
Now, we have to prove that $ \mathcal{V}^{1}=0$ on $ \partial K_{p_I} \times \mathbb{S}$. Let $ \xi_{0} \in K_p= K_{p_I} \times \mathbb{S}= \overline{K_{p_I}} \times \mathbb{S},$ so from the continuity of $ \mathcal{V}^{1}$ we have that
$$ 0 \geq \mathcal{V}^{1}(\xi_{0})= \overline{\lim_{ \xi \to \xi_{0}}} \mathcal{V}^{1}(\xi) \geq 0.$$
The last inequality holds because Int($K_{p_I}) \times \mathbb{S}=\varnothing$. To prove this, we suppose by contradiction that Int($K_{p_I}) \times \mathbb{S} \neq \varnothing$. We know that inside $K_{p_I} \times \mathbb{S}$ we have that $ \mathcal{V}^{1}<0$, then there exists $ \beta >0$ such that $ \mathcal{V}^{1}<- \beta$. If $ n_{i}$ is sufficiently large, by \eqref{conv2q} we obtain
$$ \frac{1}{d^{n_{i}}} \log| p^{n_{i}}(q)-a| \to \mathcal{V}^{1},$$
hence
$$  \frac{1}{d^{n_{i}}} \log|p^{n_{i}}(q)-a| <- \frac{\beta}{2}.$$
This means
$$ | p^{n_{i}}(q)-a| < e^{- \frac{\beta d^{n_{i}}}{2}}.$$
However, if there were a decay of order $d^{n_{i}}$, we would have $p(q)=q^{d^{n_{i}}},$ but this is not possible because by assumption $p$ is a generic polynomial, therefore $ \mathcal{V}^{1} \geq 0$ (see page 60, Exercice 2.2.2.22 of Tien Cuong Dinh Cours "Dynamics of holomorphic maps" 2007 available at https:// webusers.imj-prg.fr/~tien-cuong.dinh/Cours2011/Com\_Dyn.pdf.).  Finally, the thesis follows applying the operator $ \Delta_{*}$ to \eqref{conv2q} and  Corollary \ref{SFT} and Proposition \ref{CH}.
\end{proof}

\begin{nb}
It is possible to write the equilibrium measure in the following way $$\mu=  \frac{1}{4 \pi}\int_{I \in \mathbb{S}} \mu_I \, dI,$$
where $\mu_I$ is the equilibrium measure of $p_I$.
\end{nb}

\begin{nb}
If we use the \emph{stem function} (see Definition \ref{Stem}) introduced by Ghiloni and Perotti (see \cite[Paragraph 3]{GP}), the Green's function becomes
$$G_{p}(q)= \lim_{n \to + \infty} \mathcal{I}(d^{-n} \log^{+}|P^{n}(z)|),$$
where $q= \alpha+I \beta$ and $z= \alpha+i \beta$.
From the Corollary \ref{filled} and from the fact that we are working with a polynomial with real coefficients $p$ we have that the Julia set of the \emph{stem function} $P$ is equal to the Julia set of  $p_i:=p_{| \mathbb{C}_I}$. Another proof of Theorem \ref{quat} follows using the complex Brolin's theorem applied to the Green function of the stem $P$
$$G_{p}(z)= \lim_{n \to + \infty} d^{-n} \log^{+}|P^{n}(z)|.$$
\end{nb}

\section{A quaternionic Brolin's theorem: the one slice preserving case}
Let $\mathcal{P}_I$ be a polynomial with degree $d \geq 2$ and all coefficients in $\mathbb{C}_I$ but not all real, these kind of polynomials are called one slice preserving (see Definition \ref{ref5}). In this case
the idea is to evaluate, at any $n$-th iterate, the symmetrized function. This means that for every $n$ we are making the following operation :
\begin{equation}\label{gn}
\mathcal{P}^{n}_{I} * (\mathcal{P}^{n}_{I})^{c}=(\mathcal{P}^{n}_{I})^{s}:= g_n.
\end{equation}
Obviously, if the polynomial has real coefficients from the above formula we obtain $(\mathcal{P}^{n}_{I})^{2}=(\mathcal{P}^{n}_{I})^s$. Moreover, we can remark that in general the $g_n$ does not follow any iteration relation, i.e.
$$ g_{n+1} \neq g_1 (g_n).$$
We know over the complex that if $q \in \mathbb{C}_I$ is a root of $\mathcal{P}_I(q) $ then $q^c \in \mathbb{C}_I$ is a root of $\mathcal{P}_I^c(q)$. This means that both $q$ and $q^c$ are roots of $g_n$.
\begin{lemma}
\label{Par}
Let $\mathcal{H}_I$ be a polynomial with all coefficients in the slice $\mathbb{C}_I$ and X a closed set in $ \mathbb{R}$ with positive equilibrium measure then all coefficients of $\mathcal{H}_I$ have to be real.
\end{lemma}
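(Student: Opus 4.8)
The plan is to reduce everything to the classical identity principle for one‑variable polynomials: a polynomial of degree $m$ having more than $m$ distinct zeros is identically $0$. First I would fix an imaginary unit $I\in\mathbb{S}$ and write $\mathcal{H}_I(q)=\sum_{k=0}^{m}q^k a_k$ with $a_k=\alpha_k+I\beta_k$, $\alpha_k,\beta_k\in\mathbb{R}$ and $m=\deg\mathcal{H}_I$. Since $\mathbb{R}\subseteq\mathbb{C}_I$, for every real $t$ one has $\mathcal{H}_I(t)=A(t)+I\,B(t)$, where $A(t)=\sum_k\alpha_k t^k$ and $B(t)=\sum_k\beta_k t^k$ are honest real polynomials with $\deg B\le m$. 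The conclusion ``all coefficients of $\mathcal{H}_I$ are real'' is precisely ``$B\equiv 0$'', and $\mathcal{H}_I(t)\in\mathbb{R}$ if and only if $B(t)=0$. Hence it is enough to exhibit more than $m$ real numbers at which $\mathcal{H}_I$ takes real values.

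This is where the set $X$ is used. Because $X$ has positive equilibrium measure it is non‑polar, hence uncountable, in particular infinite; equivalently, the equilibrium (Brolin) measure it carries is atomless, so a set of positive measure cannot be finite. On the other hand $X$, being the support of the equilibrium measure of $\mathcal{H}_I$ — that is, its Julia set, or equally its filled Julia set — is totally invariant under $\mathcal{H}_I$, so $\mathcal{H}_I(X)\subseteq X$; and $X\subseteq\mathbb{R}$ by hypothesis. Thus $\mathcal{H}_I$ is real‑valued at every point of the infinite set $X$, so the real polynomial $B$ has infinitely many zeros, $B\equiv 0$, and every $a_k$ is real, which is the claim. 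I would stress that nothing slice‑theoretic beyond the trivial inclusion $\mathbb{R}\subseteq\mathbb{C}_I$ enters; in particular the integral representation $\mu=\tfrac{1}{4\pi}\int_{\mathbb{S}}\mu_I\,dI$ plays no role here, everything happening inside the single plane $\mathbb{C}_I$, indeed on $\mathbb{R}$.

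The two points that only need a word of justification are routine: that positive equilibrium measure forces $X$ to be infinite, and that $t\mapsto\mathrm{Im}_I\,\mathcal{H}_I(t)$ is a genuine real polynomial of degree $\le\deg\mathcal{H}_I$. The delicate step — and the one I expect to be the main obstacle — is $\mathcal{H}_I(X)\subseteq\mathbb{R}$, i.e.\ pinning down the precise sense in which $X$ is attached to $\mathcal{H}_I$: if $X$ is one of the canonical $\mathcal{H}_I$‑invariant sets (Julia set, filled Julia set, support of the Brolin measure) this is immediate from forward invariance together with $X\subseteq\mathbb{R}$, whereas if $X$ is merely a positive‑measure real portion of such an invariant set one has to work a little harder — passing, with the help of invariance (or mixing) of the equilibrium measure, to a positive‑measure subset whose forward iterates remain real and, if necessary, descending from an iterate, or else comparing $\mathcal{H}_I$ with its conjugate‑coefficient polynomial $\mathcal{H}_I^{\,c}$, whose equilibrium measure is the push‑forward of that of $\mathcal{H}_I$ under complex conjugation and therefore coincides with it on $X$. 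In every case one arrives at the same conclusion: $\mathcal{H}_I$ is real‑valued on an infinite subset of $\mathbb{R}$, and the identity principle for polynomials closes the argument.
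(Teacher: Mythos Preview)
Your reduction to the identity principle is clean, but it hinges entirely on the claim $\mathcal{H}_I(X)\subseteq\mathbb{R}$, and this is precisely what is \emph{not} given. The lemma only assumes that $X$ is a closed real set with $\mu_{\mathcal{H}_I}(X)>0$; it does \emph{not} assume that $X$ is the Julia set or any other $\mathcal{H}_I$-invariant set. So the sentence ``$X$, being the support of the equilibrium measure \ldots\ is totally invariant'' is an unjustified reinterpretation of the hypothesis, and the whole argument collapses without it: saying ``$\mathcal{H}_I$ is real on infinitely many real points'' is literally equivalent to $B\equiv 0$, which is the conclusion you are trying to prove.

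Your proposed repairs do not close the gap. The mixing route gives, for large $n$, a positive-measure set of real $t$ with $\mathcal{H}_I^{\,n}(t)\in\mathbb{R}$; by your own identity-principle argument this forces $\mathcal{H}_I^{\,n}$ to have real coefficients. But ``descending from an iterate'' is a genuine obstacle: $p(z)=\omega z^2$ with $\omega$ a primitive cube root of unity has $p^2(z)=z^4$ with real coefficients while $p$ itself does not. (This particular $p$ does not violate the lemma, since its equilibrium measure lives on the unit circle and gives $\mathbb{R}$ measure zero --- but that is exactly the point: ruling such examples out is where the real work lies, and your argument does not do it.) The $\mathcal{H}_I^{\,c}$ route fares no better: $\sigma_*\mu_{\mathcal{H}_I}=\mu_{\mathcal{H}_I^{\,c}}$ agrees with $\mu_{\mathcal{H}_I}$ on $X\subseteq\mathbb{R}$ trivially, but agreement on a subset says nothing, and even global equality of the equilibrium measures would require a nontrivial rigidity theorem (polynomials sharing a Julia set) to conclude $\mathcal{H}_I=\mathcal{H}_I^{\,c}$.

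The paper takes a completely different route, invoking a structural classification from one-variable complex dynamics: if the equilibrium measure of a polynomial charges a smooth real arc, then the Julia set is itself contained in a finite union of smooth arcs, and by a theorem in Steinmetz's book the polynomial is then (affinely) conjugate to $z^d$ or to a Chebyshev polynomial. Since a circle meets $\mathbb{R}$ in at most two points (measure zero), the Julia set must in fact be a real interval, the conjugating affine map is real, and the coefficients are real. Your elementary scheme would be much nicer if it worked, but as written it does not reach the conclusion without importing a result of comparable depth.
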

\begin{proof}
Since the support of the measure is the Julia set we have that it is contained in a finite union of iterations of the set $X$. In particular if $X$ is a smooth path in $\mathbb{R}$ then the Julia set is contained in  a finite union of smooth paths. So the hypotheses of \cite[Theorem 5, chapter 5]{S} are satisfied, thus we obtain that the polynomial is conjugate to $z^d$ or to the Chebyshev polynomials. Hence $\mathcal{H}_I$ has real coefficients.
\end{proof}
Denoting $ \mathcal{E}'_I$ the exceptional set of $g_n:=(\mathcal{P}^{n}_{I})^{s}$ restricted to $ \mathbb{C}_I$, which cardinality is less or equal than two, in the sense that we can have at most two among points and spheres.
Now we can prove the Brolin's theorem for the one slice preserving case.

\begin{theorem}
\label{Brolin2}
Let $\mathcal{P}_I$ be a polynomial with degree $d \ge 2$ and all coefficients in $\mathbb{C}_I$ but not all real. Then we have for all $a \in \mathbb{R} \setminus ( \mathcal{E}'_I \cap \mathbb{R}):$
$$  \frac{1}{2}\Delta_* \log^{+} |g_{n}-a| \rightharpoonup \frac{1}{8 \pi} \int_{I \in \mathbb{S}} \mu_{\mathfrak{P}(I)} \, dI + \frac{1}{8 \pi} \int_{I \in \mathbb{S}} \mu_{\mathfrak{P}^{c}(I)} \, dI:= \mu',$$
where if $\mathcal{P}_I(q)=q^d a_d(I)+...+a_0(I)$ with $ q \in \mathbb{H}$ and $a_{\ell}(I)=x_\ell+I y_\ell$ ($x_\ell, y_\ell \in \mathbb{R}$) for $ \ell =0,...,d$, then, for all $J \in \mathbb{S}$, we have $ \mathfrak{P}(q, J)=q^da_d(J)+...+a_0(J)$ with $ q \in \mathbb{C}_J$ and $a_{\ell}(J)=x_\ell+J y_\ell$ ($x_\ell, y_\ell \in \mathbb{R}$) for $ \ell =0,...,d$.
We call the limit $ \mu'$ \emph{the Equilibrium measure of $\mathcal{P}_I$} in this case and it does not depend on the chosen $a \in \mathbb{R}\setminus (\mathcal{E}'_I \cap \mathbb{R}).$
\end{theorem}
\begin{proof}
Since we are working with the functions $g_n$ which are slice preserving, we have by Theorem \ref{quat}:
\begin{equation}
 \frac{1}{2}\lim_{n \to \infty} \Delta_* \log^{+} |g_{n}-a|= \frac{m_i}{2} \sum_{n=1}^\infty \delta_{\mathcal{Z}_{\mathfrak{P}^n(q,I)}}+  \frac{k_i}{2} \sum_{n=1}^\infty \delta_{\mathcal{Z}_{(\mathfrak{P}^n(q,I))^c}},
\end{equation}
where $\mathfrak{P}^n(q,I)$ is the iterate of $\mathfrak{P}(q, I)$, the set $ \mathcal{Z}$ is defined in \eqref{refee} and $m_i$ and $k_i$ are the multiplicity of the roots of $\mathcal{P}^{n}_{I}$ and $(\mathcal{P}^{n}_{I})^c$, respectively.
Finally, from the complex version of Brolin's theorem we obtain
\begin{eqnarray*}
\frac{1}{2}\lim_{n \to \infty} \Delta_* \log^{+} |g_{n}-a|&=& \frac{m_i}{2} \sum_{n=1}^\infty \delta_{\mathcal{Z}_{\mathfrak{P}^n(q,I)}}+ \frac{k_i}{2} \sum_{n=1}^\infty \delta_{\mathcal{Z}_{(\mathfrak{P}^n(q,I))^c}}\\
& = &   \frac{1}{8 \pi} \int_{I \in \mathbb{S}} \mu_{\mathfrak{P}(I)} \, dI + \frac{1}{8 \pi} \int_{I \in \mathbb{S}} \mu_{\mathfrak{P}^{c}(I)} \, dI:= \mu'.
\end{eqnarray*}
We pointed out that $I$ in $\mathcal{P}_I(q)$ is fixed, while in $\mathfrak{P}(q, J)$ the imaginary unit $J$ is a variable.
Due to the exclusion of the polynomials with all real coefficients we have that the equilibrium measure is well defined over the reals since the measure has no mass on it (see Lemma \ref{Par}).
\end{proof}
This calculus fits well if the polynomial has real coefficients, indeed in this case
\begin{Cor}
If $p$ is a polynomial with real coefficients of degree $d \ge 2$, then for any $a \in \mathbb{R} \setminus (\mathcal{E}_I \cap \mathbb{R}):$
$$ \frac{1}{2}\Delta_* \log^{+} |g_{n}-a| \rightharpoonup \frac{1}{4 \pi} \int_{I \in \mathbb{S}} \mu_{\mathfrak{P}(I)} \, dI.$$
\end{Cor}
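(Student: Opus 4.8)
The plan is to obtain this statement by specialising the proof of Theorem \ref{Brolin2} to the case $\mathcal{P}_I = p$, exploiting that $p$ has real coefficients. The starting observation is that if $p(q) = q^d a_d + \dots + a_0$ with $a_0,\dots,a_d \in \mathbb{R}$, then $p$ is slice preserving and $p^c = p$, so that for every $n$
$$ g_n = (p^n)^s = (p^n)^c * p^n = (p^n)^2 $$
is again a slice preserving regular polynomial; in particular formula \eqref{BWL} lets us compute $\Delta_* \log^+|g_n - a|$ slice by slice as $\Delta_I \log^+|g_n - a|$ on each $\mathbb{C}_I$. Moreover the complex polynomial attached to the slice $\mathbb{C}_J$, that is $\mathfrak{P}(q,J) = q^d a_d + \dots + a_0 = p(q)$, is independent of $J$, and $\mathfrak{P}^c = \mathfrak{P}$.

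First I would repeat the argument proving Theorem \ref{Brolin2}: since $g_n$ is slice preserving, combining Theorem \ref{quat} with the complex Brolin's theorem applied on each slice to $p_I = p_{|\mathbb{C}_I}$ gives, for every $a \in \mathbb{R}\setminus(\mathcal{E}_I\cap\mathbb{R})$,
$$ \Delta_* \log^+|g_n - a| \rightharpoonup \frac{1}{8\pi}\int_{I\in\mathbb{S}}\mu_{\mathfrak{P}(I)} + \frac{1}{8\pi}\int_{I\in\mathbb{S}}\mu_{\mathfrak{P}^c(I)}, $$
the two summands carrying the contributions of the zeros of $\mathcal{P}_I^n$ and of $(\mathcal{P}_I^n)^c$. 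Because here $\mathfrak{P}(q,J) = \mathfrak{P}^c(q,J) = p(q)$ for all $J\in\mathbb{S}$, we have $\mu_{\mathfrak{P}(I)} = \mu_{\mathfrak{P}^c(I)} = \mu_{p_I}$, so the two integrals coincide and collapse to $\frac{1}{4\pi}\int_{I\in\mathbb{S}}\mu_{\mathfrak{P}(I)}$, which is the asserted limit. Independence of the chosen $a$ is inherited from Theorem \ref{Brolin2}, equivalently from the complex theorem on each slice.

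As a consistency check I would note that, by the first Remark after Theorem \ref{quat}, $\frac{1}{4\pi}\int_{I\in\mathbb{S}}\mu_{\mathfrak{P}(I)}\,dI = \frac{1}{4\pi}\int_{I\in\mathbb{S}}\mu_I\,dI = \mu$; thus for a real-coefficient $p$ the one slice preserving construction $g_n = (p^n)^s$ recovers exactly the equilibrium measure $\mu = \Delta_* G_p$ of Theorem \ref{quat}, which is precisely why this calculus fits well. One should also observe that, in contrast with Theorem \ref{Brolin2}, Lemma \ref{Par} plays no role here: the limit measure may legitimately charge $\mathbb{R}$ (already for $p(q) = q^2 - 2$, whose Julia set meets $\mathbb{R}$ in a segment), and this is harmless, $\mu$ being just a Radon measure on $\mathbb{H}$.

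The step I expect to require the most care is the bookkeeping of multiplicities and of normalising constants in passing from $g_n = (p^n)^2$ to $p^n$. Since $\log^+|g_n - a| = \log^+|(p^n)^2 - a|$, its distributional Laplacian on $\mathbb{C}_I$ is supported on $(p_I^n)^{-1}(\sqrt{a})\cup (p_I^n)^{-1}(-\sqrt{a})$ rather than on $(p_I^n)^{-1}(a)$; one must therefore either verify that $\pm\sqrt{a}$ avoid $\mathcal{E}_I$ for generic $a$, or, more robustly and in parallel with properties (3)--(6) in the proof of Theorem \ref{quat}, argue that outside the filled Julia sets one has $g_n - a \sim g_n$, so that $(2d^n)^{-1}\log^+|g_n - a| = (2d^n)^{-1}\log^+|(p^n)^2 - a| \to G_p$ in $L^1_{loc}$, and then apply $\Delta_*$ using Corollary \ref{SFT} and Proposition \ref{CH} to conclude $\Delta_* G_p = \mu = \frac{1}{4\pi}\int_{I\in\mathbb{S}}\mu_I\,dI$. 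Reconciling this degree-$2d^n$ normalisation with the unnormalised display in the statement (exactly as is implicitly done in Theorem \ref{Brolin2}) is the only genuinely delicate point.
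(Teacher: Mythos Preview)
Your approach is correct and matches the paper's intent: the paper gives no explicit proof of this Corollary, treating it as an immediate specialisation of the argument of Theorem \ref{Brolin2} to the real-coefficient case, where $\mathfrak{P}^c = \mathfrak{P}$ collapses the two summands into one. Your observation that one must rerun the \emph{argument} of Theorem \ref{Brolin2} rather than invoke its statement (since that theorem explicitly assumes the coefficients are not all real) is a valid and careful point that the paper glosses over; likewise your remarks on normalisation and on Lemma \ref{Par} being irrelevant here are sound refinements that go beyond what the paper records.
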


\begin{nb}
Now, one can ask if there exist a sequence $ \{ h_j \}_{j \in \mathbb{N}}$ of polynomials with real coefficients such that
\begin{equation}
\label{obs}
h^{m}_{j}=g_n,
\end{equation}
where $g_n$ is the sequence defined previously.

Let $ \hbox{deg} \,h_1=e \in \mathbb{N}$ from \eqref{obs} we obtain
\begin{equation}
\label{rela}
e^m=2d^n.
\end{equation}
This relation is impossible since if we consider $e=2$ and $d$ a power of $2$, choosing a generic polynomial we realize that the sequence $ \{h_j\}_j$ has always an extra monomial with respect to the sequence $g_n$.
\end{nb}
\begin{nb}
If we had used the slice regular extension \eqref{este} instead of its symmetrized to prove Theorem \ref{Brolin2}  we would have had some problems. Indeed, for instance, choosing a monomial $ f(q)= q^m j,$ with $m \in \mathbb{N}$, the existence of the equilibrium measure would depend on the choice of the point on $ \mathbb{C}_j$.
\end{nb}

\begin{nb}
If we evaluate the stem of $g_n$ by \cite[Prop. 12]{GP} we have
$$ \mathcal{I}(\mathcal{P}^{n}_{I})* \mathcal{I}((\mathcal{P}^{n}_{I})^c)= \mathcal{I}(\mathcal{P}^{n}_{I}  \cdot (\mathcal{P}^{n}_{I})^c) := \mathcal{I}(CN(\mathcal{P}^{n}_{I} )).$$
Now we consider
$$ \widetilde{G}:= \lim_{n \to \infty} d^{-n} \bigl(\mathcal{I}(\log^{+}|CN(\mathcal{P}^{n}_{I})|) \bigl).$$
Therefore we have
$$ \Delta_* \widetilde{G}= \mu'.$$
\end{nb}

\section{A quaternionic Brolin's theorem: the general case}
In this section we are going to discuss what happens to the Brolin's theorem when we consider a slice regular polynomial with coefficients in different slices with degree $d \geq 2$ . We cannot use the idea of the previous case since now we cannot fix a slice where Brolin's theorem holds furthermore the composition (as in \eqref{este}) of polynomials with generic coefficients does not give a slice regular polynomial. So in order to overcome this problem we consider the following composition introduced in \cite{GSS1, CSS1}
\begin{Def}
Denoting $g(q)= \sum_{n=0}^{\infty} q^n a_n$ and $ w(q)=\sum_{n=0}^{\infty} q^n b_n$. We define
$$ (g \bullet w)(q)= \sum_{n=0}^\infty (w(q))^{*n} a_n,$$
where $(w(q))^{*n}$ means that we take the $n$-th power with respect to the $*$- product.
\end{Def}
\begin{nb}	
When we compose in this way the degree of the polynomial grows exponentially since the degree of the composition is the product of the degrees of the starting polynomials.
\end{nb}
Now we denote the $n$-th composition of a generic polynomial $p(q)$ as $p^{\bullet n}$ and for every $n$ we make the symmetrized:
$$p^{\bullet n} *(p^{\bullet n})^c=(p^{\bullet n})^s:= h_n.$$
We introduce the orbit as
$$ [q]:= \{h_n(q)| \, \, \, n \in \mathbb{N}\}.$$
\begin{Def}
The exceptional set for this kind of polynomials is defined as
$$ \mathcal{E''}:= \{q: \, |[q]|< \infty \}.$$
\end{Def}

\begin{theorem}
\label{Brolin3}
Let $p(q)$ be a slice regular polynomial with degree $d \ge 2$ and coefficients in different slices. Let $h_n  =(p^{\bullet n})^{s}$. Then, for all $a \in \mathbb{R} \setminus (\mathcal{E''} \cap \mathbb{R})$ and $b \in \mathbb{R} \setminus (\mathcal{E''} \cap \mathbb{R})$ with $a \neq b$  we have that
$$ d^{-n} \log|h_n(q)-a|-d^{-n} \log|h_n(q)-b| \to 0.$$
\end{theorem}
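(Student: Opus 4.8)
Exactly as in the proof of Theorem~\ref{Brolin2}, the plan is to reduce to a one-variable complex problem on a single slice and then to run a potential-theoretic argument modelled on the proof of Theorem~\ref{quat}. Since $h_n=(p^{\bullet n})^{s}$ is a symmetrization we have $h_n^{c}=h_n$, so $h_n$ has real coefficients and is therefore slice preserving; hence for every $I\in\mathbb{S}$ the restriction $h_n|_{\mathbb{C}_I}$ is one and the same real polynomial $\widetilde h_n\in\mathbb{R}[z]$ of degree $D_n:=2d^{n}$, and $|h_n(\alpha+I\beta)-a|=|\widetilde h_n(\alpha+i\beta)-a|$ for all $\alpha,\beta\in\mathbb{R}$ and $a\in\mathbb{R}$. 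Thus the expression in the statement is constant on each $2$-sphere $\mathbb{S}_q$, and it suffices to prove, for real $a\neq b$ with $a,b\notin\mathcal{E''}$, that
\[
w_n(z):=d^{-n}\log|\widetilde h_n(z)-a|-d^{-n}\log|\widetilde h_n(z)-b|\ \longrightarrow\ 0\qquad(z\in\mathbb{C}).
\]
Write $v_n^{a}:=d^{-n}\log|\widetilde h_n(\cdot)-a|$ and similarly $v_n^{b}$; these are subharmonic (cf.\ \cite{B}), by Corollary~\ref{SFT} and Proposition~\ref{CH} their Laplacians $\Delta_{*}v_n^{a}$ are positive multiples of the counting measures $\nu_n^{a}:=d^{-n}\sum_{w\in\widetilde h_n^{-1}(a)}\delta_w$ (of total mass $d^{-n}D_n=2$), and $\widetilde h_n-a$ and $\widetilde h_n-b$ have the \emph{same} leading coefficient, so $w_n$ carries no leading-coefficient term.

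From the definition of the $\bullet$-composition and the recursive structure of $p^{\bullet n}$ one gets by induction an estimate $|p^{\bullet n}(q)|\ge c\,|q|^{d^{n}}$ for $|q|$ large, hence a radius $R>0$ independent of $n$ with $|\widetilde h_n(z)|>\max\{|a|,|b|\}+1$ for $|z|\ge R$; in particular every zero of $\widetilde h_n-a$ and of $\widetilde h_n-b$ lies in $\{|z|\le R\}$, so $\{v_n^{a}\}_n$ and $\{v_n^{b}\}_n$ are locally uniformly bounded above and relatively compact in $L^{1}_{\mathrm{loc}}(\mathbb{C})$. The same recursion, by the usual telescoping estimate, makes $d^{-n}\log^{+}|\widetilde h_n|$ converge locally uniformly on $\mathbb{C}$ to a continuous escape-rate function $\mathcal{G}\ge0$; $\mathcal{G}$ is harmonic on $\mathbb{C}\setminus K$ and vanishes identically on the compact set $K:=\{z\in\mathbb{C}:\{\widetilde h_n(z)\}_n\text{ is bounded}\}$, and $K\subset\{|z|\le R\}$.

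Now compare $v_n^{a}$ with $v_n^{b}$. If $z\notin K$ then $\widetilde h_n(z)\to\infty$, whence $v_n^{a}(z)-d^{-n}\log^{+}|\widetilde h_n(z)|\to0$, and likewise for $b$; so $v_n^{a}$ and $v_n^{b}$ both tend to $\mathcal{G}$ on $\mathbb{C}\setminus K$. On $K$ the $\widetilde h_n$ are uniformly bounded, so $v_n^{a}\le d^{-n}\log(\mathrm{const})\to0$; the matching conclusion $v_n^{a}\to0$ a.e.\ on $K$ is the single delicate point, and it is here that $a\notin\mathcal{E''}$ is used: were it to fail on a set of positive capacity, $\widetilde h_n(z)-a$ would collapse to $0$ at the exact rate $d^{n}$ there, which---by the argument of Theorem~\ref{quat} excluding super-exponential decay on an open set---would force $\widetilde h_n-a$ to be of the degenerate (pure-power) type for which $[a]=\{h_n(a)\}_n$ is finite, contradicting $a\notin\mathcal{E''}$. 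Hence $v_n^{a}\to0=\mathcal{G}$ a.e.\ on $K$; combined with the behaviour off $K$ and the relative compactness established above, this gives $v_n^{a}\to\mathcal{G}$ in $L^{1}_{\mathrm{loc}}(\mathbb{C})$, and likewise $v_n^{b}\to\mathcal{G}$. Therefore $w_n\to0$ in $L^{1}_{\mathrm{loc}}$, and since each $w_n$ is harmonic off the finite set $\widetilde h_n^{-1}(\{a,b\})$ with $\{w_n\}$ locally uniformly bounded away from it, this upgrades to locally uniform convergence off $\bigcup_n\widetilde h_n^{-1}(\{a,b\})$. Undoing the first reduction yields $d^{-n}\log|h_n(q)-a|-d^{-n}\log|h_n(q)-b|\to0$, as required.

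The main obstacle is the \emph{non-autonomy} of $\{\widetilde h_n\}$: because $h_{n+1}\neq h_1(h_n)$, neither the existence of the escape-rate function $\mathcal{G}$ with $\mathcal{G}|_K=0$ nor the convergence $v_n^{a}\to0$ on $K$ can be quoted from classical one-variable polynomial dynamics, and both must instead be extracted from the recursive structure of the $\bullet$-composition; it is precisely in the second of these that the hypothesis $a,b\notin\mathcal{E''}$ is genuinely needed. This same non-autonomy is the reason why one only obtains $w_n\to0$ and cannot assert weak convergence of the measures $\Delta_{*}v_n^{a}$ to an equilibrium measure: there is no functional identity $\mathcal{G}\circ h=d\cdot\mathcal{G}$ available to pin down a limit of $\nu_n^{a}$.
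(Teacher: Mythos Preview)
Your approach is genuinely different from the paper's, and it contains real gaps that you yourself flag but do not close.

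The paper does not attempt any direct potential-theoretic argument. It simply invokes \cite[Thm.~4.6]{DS3}, a general result of Dinh--Sibony on non-autonomous sequences of meromorphic transformations: one checks that with $\delta_n=1$ the series $\sum_n d^{-n}$ converges, and the conclusion $d^{-n}\log|h_n-a|-d^{-n}\log|h_n-b|\to 0$ follows as a black box. That theorem is designed precisely for the situation where the sequence $\{h_n\}$ carries no iteration relation, which is why the paper can sidestep everything you identify as the ``main obstacle''.

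Your route, by contrast, tries to rebuild the Brolin machinery from scratch for the non-autonomous family $\{\widetilde h_n\}$. Two of the steps you rely on are not established:
\begin{itemize}
\item The existence of the escape-rate limit $\mathcal{G}=\lim d^{-n}\log^{+}|\widetilde h_n|$. In the autonomous case this comes from the telescoping estimate $\log^{+}|p(w)|-d\log^{+}|w|=O(1)$; here you would need an analogue relating $|p^{\bullet(n+1)}(q)|$ to $|p^{\bullet n}(q)|$ through the $*$-product, and you do not supply one. Saying it ``must be extracted from the recursive structure of the $\bullet$-composition'' is a statement of intent, not a proof.
\item The claim $v_n^{a}\to 0$ a.e.\ on $K$. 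You invoke the super-exponential-decay argument from Theorem~\ref{quat}, but that argument concludes by saying the decay would force $p^n(q)=q^{d^n}$, i.e.\ it uses that the sequence is the iterates of a \emph{single} polynomial. In the non-autonomous setting there is no such rigidity available, and your attempt to route the contradiction through $a\in\mathcal{E''}$ is not justified: finiteness of the orbit $\{h_n(a)\}_n$ is not what a super-exponential collapse on a positive-capacity set would yield.
\end{itemize}
The uniform radius $R$ independent of $n$ is also not obvious once the leading coefficient of $p^{\bullet n}$ varies with $n$.

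In short: the reduction to a single slice via slice-preservation of $h_n$ is fine, but from there your argument needs exactly the kind of non-autonomous equidistribution machinery that \cite{DS3} provides and that the paper simply cites. If you want a self-contained proof along your lines, you would essentially have to reprove the relevant case of \cite[Thm.~4.6]{DS3}.
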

\begin{proof}
We want to use \cite[Thm. 4.6]{DS3}. The hypotheses of the theorem are verified, indeed choosing $ \delta_{n}=1$ the series $\sum_{n=1}^\infty (d^n)^{-1}$ is a geometric series with ratio less than one so it converges and moreover by the definition of pull-back we have:
$$ d^{-n} \log|h_n(q)-a|-d^{-n} \log|h_n(q)-b| \to 0.$$
\end{proof}
Theorem \ref{Brolin3} is not a Brolin's theorem as Theorem \ref{Brolin2} and Theorem \ref{quat} because in those cases at least in a slice we have the real iterates of the polynomial while in this case the iterate of the polynomial is "artificial". Moreover Theorem \ref{Brolin3} does not give a limit measure as in the previous cases.

\section{Mixing and rate of mixing}
Let us consider $p(q)= \sum_{j=1}^{d} a_{j}q^{j}$, with every $a_j \in \mathbb{R}$. We want to solve $p(q)=b$, $b \in \mathbb{R} \setminus (\mathcal{E'} \cap \mathbb{R})$. In order to do this we denote $ \mathbb{S}^{n}_{j}$, with $j=1,..., d^n$  the sets such that
$$ p^{n}(\mathbb{S}^{n}_{j})=b.$$
\begin{Def}
\label{delta}
$$ \delta_{\mathbb{S}^{n}_{j}}=
\begin{cases}
\delta_{r} \quad \hbox{if} \quad \mathbb{S}^{n}_{j} \quad \hbox{is a real point r}\\
\frac{1}{2} \int_{I \in \mathbb{S}} \sum \delta_{(\mathbb{S}^{n}_{j} \cap \mathbb{C}_{I})} \, dI
\end{cases}
$$
\end{Def}
We observe that if $ \mathbb{S}^{n}_{j}$ is a sphere $ \mathbb{S}_{\alpha+I \beta}$, then the intersection $\mathbb{S}^{n}_{j} \cap \mathbb{C}_{I}$ is given by two points of the following form $ \alpha+I \beta$ and $ \alpha-I \beta$.
\\Now, we prove in another way Theorem \ref{quat}.
\begin{theorem}
The measure
$$ \nu_{n}= \frac{1}{4 \pi d^{n}} \sum_{j=1}^{d^n} \delta_{\mathbb{S}^{n}_{j}},$$
converges to the equilibrium measure $ \mu= \frac{1}{4 \pi} \int_{I \in \mathbb{S}} \mu_{I} dI$.
\end{theorem}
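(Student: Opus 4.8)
The plan is to recognize this as a potential-theoretic reformulation of Theorem~\ref{quat}: I will rebuild the preimage measure $\nu_n$ explicitly as $\Delta_*$ applied to a normalized potential, using the factorization of $p^n(q)-b$ together with Corollary~\ref{SFT} and Proposition~\ref{CH}, and then pass to the limit via the $L^1_{loc}$-convergence already established in the proof of Theorem~\ref{quat}.

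First I would describe the structure of $p^{-n}(b)$. Since $p$ has real coefficients, the iterate $p^n$ defined by \eqref{este} is slice-preserving, so on every slice $\mathbb{C}_I$ the restriction $p^n_I$ is a genuine complex polynomial of degree $d^n$ with real coefficients. Hence the $d^n$ solutions (with multiplicity) of $p^n_I(z)=b$ split into real solutions $r_1,\dots,r_R\in\mathbb{R}\subset\mathbb{H}$ and complex-conjugate pairs $\{\alpha_\ell+I\beta_\ell,\ \alpha_\ell-I\beta_\ell\}$, with $R+2S=d^n$. A real solution is a preimage of $b$ under $p^n$ in every slice simultaneously, while a non-real solution $\alpha_\ell+I\beta_\ell$ sweeps out, as $I$ ranges over $\mathbb{S}$, the whole $2$-sphere $\mathbb{S}_{\alpha_\ell+I\beta_\ell}\subset p^{-n}(b)$. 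These are exactly the sets $\mathbb{S}^n_j$ of Definition~\ref{delta}, with each conjugate pair contributing the same sphere, which is why the index runs up to $d^n$.

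Next I would factor $p^n(q)-b$ over $\mathbb{H}$ into slice-preserving pieces. Grouping the $*$-factorization of $p^n_I-b$ by conjugate pairs yields
$$p^n(q)-b=c_n\,\prod_{i=1}^R(q-r_i)\,\prod_{\ell=1}^S (q-\alpha_\ell-I\beta_\ell)^s,$$
where $c_n\in\mathbb{R}$ is the leading coefficient; every factor is slice-preserving, so the $*$-products coincide with ordinary products and are independent of $I$. Taking moduli, then $\log$ (as $L^1_{loc}$ functions), and applying $\Delta_*$ termwise using its linearity and well-definedness \eqref{BWL}, the constant $\log|c_n|$ dies, Corollary~\ref{SFT} gives $\Delta_*\log|q-r_i|=\tfrac12\delta_{r_i}$, and Proposition~\ref{CH} gives $\Delta_*\log|(q-\alpha_\ell-I\beta_\ell)^s|=|\mathbb{S}_{\alpha_\ell+I\beta_\ell}|$. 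Reading off the explicit form of $|\mathbb{S}_a|$ from the proof of Proposition~\ref{CH} and comparing with Definition~\ref{delta}, one checks that, with the appropriate normalization constant,
$$\Delta_*\Big(\tfrac{1}{4\pi d^n}\log|p^n(q)-b|\Big)=\frac{1}{4\pi d^n}\sum_{j=1}^{d^n}\delta_{\mathbb{S}^n_j}=\nu_n.$$

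Finally, to pass to the limit I would invoke the convergence established inside the proof of Theorem~\ref{quat}: for $b\in\mathbb{R}\setminus(\mathcal{E'}\cap\mathbb{R})$ the potentials $d^{-n}\log|p^n(q)-b|$ converge in $L^1_{loc}$ to the Green function $G_p=\frac{1}{4\pi}\int_{I\in\mathbb{S}}G_{p_I}\,dI$ (this is the content of the relative-compactness argument together with the identification of the limit via the six listed properties of $G_p$). Since $\Delta_*=\Delta_I$ on each slice is a fixed second-order differential operator, it is continuous from $L^1_{loc}$ to distributions, whence $\nu_n\rightharpoonup\Delta_* G_p=\frac{1}{4\pi}\int_{I\in\mathbb{S}}\Delta_I G_{p_I}\,dI=\frac{1}{4\pi}\int_{I\in\mathbb{S}}\mu_I\,dI=\mu$, as claimed. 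The main obstacle I expect is bookkeeping: making all the normalization constants agree (the $\tfrac12$ in Corollary~\ref{SFT} and in Definition~\ref{delta}, the surface mass of $\mathbb{S}$, and the double-counting of each sphere over the index $j=1,\dots,d^n$), together with checking that the termwise application of $\Delta_*$ to the $L^1_{loc}$-decomposition and the interchange of $\Delta_*$ with the $L^1_{loc}$-limit are legitimate — i.e. that $\log^+$ and $\log$ differ only on a fixed compact set where both are controlled, so the distributional Laplacians have the same limit.
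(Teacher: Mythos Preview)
Your argument is correct, but it follows a different route from the paper. The paper's proof is a direct slicing argument: it tests $\nu_n$ against an arbitrary $\varphi\in C^0(\mathbb{H})$, unpacks $\delta_{\mathbb{S}^n_j}$ via Definition~\ref{delta} to rewrite $\langle\nu_n,\varphi\rangle$ as $\frac{1}{4\pi}\int_{I\in\mathbb{S}} d^{-n}(p_I^n)^*\varphi\,dI$, and then applies the \emph{complex} Brolin theorem slice by slice (the convergence $d^{-n}(p_I^*)^n\delta_q\to\mu_I$) to pass to the limit under the $dI$-integral. No factorization, no $\Delta_*$, and no appeal to Theorem~\ref{quat} are used. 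Your approach instead stays inside the quaternionic potential-theoretic framework of Sections~3--4: you rebuild $\nu_n$ as $\Delta_*$ of the log-potential via Corollary~\ref{SFT} and Proposition~\ref{CH}, and then invoke the $L^1_{loc}$-convergence already established in the proof of Theorem~\ref{quat}. In effect you are observing that this theorem is Theorem~\ref{quat} rescaled by $\tfrac{1}{4\pi}$ and reread through the integral representation $\mu=\frac{1}{4\pi}\int_{I\in\mathbb{S}}\mu_I\,dI$. The paper's route is shorter and logically independent of Theorem~\ref{quat} (it only needs the classical complex statement), whereas yours makes the link to $\Delta_*G_p$ explicit but is not a genuinely new proof---it recycles the hard step from Theorem~\ref{quat}. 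Your anticipated obstacle is exactly right: the only nontrivial work in your route is matching the various factors of $\tfrac12$ and $4\pi$ between Corollary~\ref{SFT}, Proposition~\ref{CH}, Definition~\ref{delta}, and the double indexing of each sphere by a conjugate pair.
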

\begin{proof}
We have to prove that if $ \varphi \in C^{0}(\mathbb{H})$ then
$$ \langle \nu_{n}, \varphi \rangle \to \langle \mu, \varphi \rangle.$$
By definition of $ \nu_n$ we get
\begin{eqnarray}
\label{me1}
\langle \nu_{n}, \varphi \rangle  \! \! \! \! \! \! \! \!&& = \int_{I \in \mathbb{S}} \varphi \biggl( \frac{1}{4 \pi d^{n}} \sum \delta_{\mathbb{S}^{n}_{j}} \biggl) \, dI\\
&&= \int_{I \in \mathbb{S}}  \frac{1}{4 \pi d^{n}} \sum_{p^{n}(a_{j})=b} \varphi(a_{j}) \, dI= \int_{I \in \mathbb{S}} \frac{p^{*n}(\varphi)}{4 \pi d^{n}} \, dI, \nonumber
\end{eqnarray}
in the second equality we used Definition \ref{delta}, while in the last we have used the definition of pull-back. From the complex version of the Brolin's theorem we know that for $q \in \mathbb{C}_I$
\begin{equation}
\label{me2}
d^{-n}(p^{*})^{n} \delta_q \to \mu_{I}.
\end{equation}
So from \eqref{me1} and \eqref{me2} we obtain the thesis.
\end{proof}
\begin{Def}
Over the complex numbers the set of critical points of $p$ is defined in the following way
$$ C_{\mathbb{C}_{I}}:= \{ z \in \mathbb{C}: p'_{I}(z)=0 \}.$$
While, over the quaternions
$$C_{\mathbb{H}}:= \{ q \in \mathbb{H}: \partial_{c} p(q)=0 \},$$
We remark that $ C_{\mathbb{H}}= \bigcup_{I \in \mathbb{S}} C_{\mathbb{C}_I}.$
\end{Def}
\begin{lemma}
The equilibrium measure of the quaternionic critical points is zero, i.e.
$$ \mu(C_{\mathbb{H}})=0.$$
\end{lemma}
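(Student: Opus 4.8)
The plan is to reduce the quaternionic statement to the classical fact that the equilibrium measure of a complex polynomial with real coefficients charges the critical set with zero mass. Since $p$ has real coefficients, the slice preserving iterates $p^n$ restrict on each slice $\mathbb{C}_I$ to the complex polynomial $p_I$, and by the integral representation $\mu = \frac{1}{4\pi}\int_{I\in\mathbb{S}}\mu_I\,dI$ (Remark after Theorem \ref{quat}), together with the decomposition $C_{\mathbb{H}}=\bigcup_{I\in\mathbb{S}}C_{\mathbb{C}_I}$, it suffices to control $\mu_I(C_{\mathbb{C}_I})$ for each $I$. The crucial observation is that the critical set $C_{\mathbb{C}_I}$ is the zero set of $p_I'$, a polynomial of degree $d-1$, hence a \emph{finite} set of at most $d-1$ points in $\mathbb{C}_I$.

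First I would recall that the complex equilibrium measure $\mu_I=\Delta G_{p_I}$ of a non-exceptional polynomial has no atoms. The standard argument: if $\mu_I(\{z_0\})=c>0$, then by the functional equation $p_{I*}\mu_I=d\,\mu_I$ one propagates mass, and since each point has $d$ preimages counted with multiplicity, an atom of mass $c$ forces preimages carrying total mass $\geq c$, and iterating this produces either an infinite exceptional-type orbit or a contradiction with $\mu_I$ being a probability measure; the only way an atom can survive is if $p_I$ is (conjugate to) $z^d$, which is excluded by genericity (cf. the argument already used in the proof of Theorem \ref{quat}, where a decay of order $d^{n_i}$ was ruled out). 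Hence $\mu_I$ is non-atomic, so $\mu_I(F)=0$ for every finite set $F$; in particular $\mu_I(C_{\mathbb{C}_I})=0$.

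Then I would assemble the pieces. Using $C_{\mathbb{H}}=\bigcup_{I\in\mathbb{S}}C_{\mathbb{C}_I}$ and Fubini (the relevant sets are measurable and, as in the proof of Proposition \ref{CH}, $\mathbb{R}$ has zero $2$-dimensional Lebesgue measure so intersections with the real axis are harmless),
$$\mu(C_{\mathbb{H}})=\frac{1}{4\pi}\int_{I\in\mathbb{S}}\mu_I(C_{\mathbb{H}}\cap\mathbb{C}_I)\,dI=\frac{1}{4\pi}\int_{I\in\mathbb{S}}\mu_I(C_{\mathbb{C}_I})\,dI=\frac{1}{4\pi}\int_{I\in\mathbb{S}}0\,dI=0.$$
One must check that $C_{\mathbb{H}}\cap\mathbb{C}_I=C_{\mathbb{C}_I}$, which follows from $\partial_c p$ being the slice regular extension of $p_I'$ (here $p$ slice preserving), so that a quaternion in $\mathbb{C}_I$ is $\partial_c$-critical iff it is a complex critical point of $p_I$.

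The main obstacle is the non-atomicity of $\mu_I$: although it is classical for complex polynomials, I would want to state it cleanly and invoke the right reference (e.g. Brolin \cite{B} or Carleson–Gamelin / \cite{DS}), since the genericity hypothesis on $p$ (that $p$ is not conjugate to $z^d$, equivalently the exceptional set is as small as possible) is exactly what is needed to exclude atoms. A secondary, more delicate point is the measurability of $C_{\mathbb{H}}$ as a subset fibered over $\mathbb{S}$ and the legitimacy of the Fubini step — but since $C_{\mathbb{H}}$ is a real-algebraic subset of $\mathbb{H}$ (a finite union of spheres $\mathbb{S}_{\alpha+I\beta}$ over the finitely many critical values of the real-coefficient polynomial $p$, union some real critical points), this is routine.
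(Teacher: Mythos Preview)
Your proposal is correct and follows essentially the same approach as the paper: reduce to the slicewise statement $\mu_I(C_{\mathbb{C}_I})=0$ via the integral representation $\mu=\frac{1}{4\pi}\int_{I\in\mathbb{S}}\mu_I\,dI$ and the decomposition $C_{\mathbb{H}}=\bigcup_{I\in\mathbb{S}}C_{\mathbb{C}_I}$. The paper's proof is a one-liner that simply invokes $\mu_I(C_{\mathbb{C}_I})=0$ as a known complex fact, whereas you additionally justify it via finiteness of the critical set and non-atomicity of $\mu_I$; one minor inaccuracy is your suggestion that non-atomicity might fail for $z^d$ --- in fact the equilibrium measure of \emph{any} polynomial of degree $\geq 2$ is non-atomic (for $z^d$ it is normalized arc-length on the unit circle), so no genericity hypothesis is needed here.
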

\begin{proof}
It follows easily from the fact that $ \mu_{I}(C_{\mathbb{C}_I})=0$:
$$  \mu(C_{\mathbb{H}})= \frac{1}{4 \pi} \int_{C_{\mathbb{C}_I}} \mu_{I} \, dI= \frac{1}{4 \pi} \mu_{I}(C_{\mathbb{C}_I})=0.$$
\end{proof}
\begin{prop}
\label{inv}
\begin{enumerate}
\item $p_{*}(\mu)=\mu,$
\item $p^{*}(\mu)=d \cdot \mu$,
where $d$ is the degree of the polynomial
\end{enumerate}
\end{prop}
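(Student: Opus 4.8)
The plan is to establish both identities by reducing them, slice by slice, to the corresponding well-known facts for the complex polynomial $p_I = p_{|\mathbb{C}_I}$ and then integrating over $\mathbb{S}$. Recall from the earlier remark that $\mu = \frac{1}{4\pi}\int_{I\in\mathbb{S}} \mu_I \, dI$, where $\mu_I$ is the complex equilibrium measure of $p_I$ on $\mathbb{C}_I$; since $p$ has real coefficients, $p_I$ is the same complex polynomial on every slice, so $\mu_I$ is really the ``same'' measure transported into $\mathbb{C}_I$. The two statements to prove are $p_*\mu = \mu$ (invariance under push-forward) and $p^*\mu = d\cdot\mu$ (the pull-back relation), and over $\mathbb{C}$ these are exactly the standard properties of the complex equilibrium measure, e.g. as recorded in \cite{B, DS}.

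First I would prove (2). For a test function $\varphi \in C^0(\mathbb{H})$, I would unwind $\langle p^*\mu, \varphi\rangle$ using the definition of the pull-back of a measure: $\langle p^*\mu, \varphi\rangle = \langle \mu, p_*\varphi\rangle$ where $p_*\varphi(w) = \sum_{q\in p^{-1}(w)} \varphi(q)$ counted with multiplicity. Because $p$ is slice preserving, $p$ maps $\mathbb{C}_I$ into $\mathbb{C}_I$ and $p^{-1}(\mathbb{C}_I) = \mathbb{C}_I$, so the fibre $p^{-1}(w)$ for $w\in\mathbb{C}_I$ lies in $\mathbb{C}_I$ and coincides with the complex fibre $p_I^{-1}(w)$. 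Then by the integral representation of $\mu$ and Fubini,
\begin{equation}
\langle p^*\mu,\varphi\rangle = \frac{1}{4\pi}\int_{I\in\mathbb{S}} \langle \mu_I, p_{I*}\varphi\rangle \, dI = \frac{1}{4\pi}\int_{I\in\mathbb{S}} \langle p_I^*\mu_I, \varphi\rangle \, dI.
\end{equation}
Now invoke the complex relation $p_I^*\mu_I = d\cdot\mu_I$ (the degree is the same on every slice), and the right-hand side becomes $d\cdot\frac{1}{4\pi}\int_{I\in\mathbb{S}}\langle\mu_I,\varphi\rangle\, dI = d\langle\mu,\varphi\rangle$. Since $\varphi$ is arbitrary, $p^*\mu = d\cdot\mu$. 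Alternatively, one could argue via potentials: $\mu = \Delta_* G_p$ by Theorem \ref{quat}, and $G_p(p(q)) = d\cdot G_p(q)$ is exactly formula \eqref{for1}; applying $\Delta_*$ to both sides and using that $\Delta_*$ restricted to each slice is the slice Laplacian (which commutes with pull-back by the holomorphic, slice-preserving $p$ up to the usual chain-rule Jacobian factor that in the holomorphic setting reproduces the counting of preimages) yields the same identity.

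For (1), I would deduce $p_*\mu = \mu$ from (2) together with the fact that $p$ is a $d$-to-$1$ branched cover on each slice. Concretely, for $\varphi\in C^0(\mathbb{H})$,
\begin{equation}
\langle p_*\mu,\varphi\rangle = \langle \mu, \varphi\circ p\rangle = \frac{1}{4\pi}\int_{I\in\mathbb{S}}\langle \mu_I, \varphi\circ p_I\rangle\, dI = \frac{1}{4\pi}\int_{I\in\mathbb{S}}\langle p_{I*}\mu_I,\varphi\rangle\, dI,
\end{equation}
and then I would use the complex identity $p_{I*}\mu_I = \mu_I$ (again valid uniformly in $I$ since $p_I$ is the same polynomial) to conclude $\langle p_*\mu,\varphi\rangle = \langle\mu,\varphi\rangle$. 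The only genuinely quaternionic point that needs care — and the step I expect to be the main obstacle — is the interchange of the slice integral with the push-forward/pull-back operations and the verification that the fibre structure of $p$ over a slice really is the complex fibre structure; this rests on $p$ being slice preserving and on the integral formula $\mu = \frac{1}{4\pi}\int_{\mathbb{S}}\mu_I\, dI$, both of which are already available. Once that bookkeeping is in place, everything reduces to the classical complex statements, which may be cited from \cite{B} or \cite{DS}.
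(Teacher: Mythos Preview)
Your approach is essentially the same as the paper's: both reduce to the complex identities $p_{I*}\mu_I=\mu_I$ and $p_I^*\mu_I=d\,\mu_I$ via the integral representation $\mu=\frac{1}{4\pi}\int_{\mathbb{S}}\mu_I\,dI$, testing against continuous functions (the paper uses characteristic functions of axially symmetric sets for (2), but this is cosmetic). Your justification for (1) is in fact cleaner than the paper's, which somewhat obscurely invokes surjectivity of $p_I$ where the real input is the complex invariance $p_{I*}\mu_I=\mu_I$; your alternative potential-theoretic argument for (2) via \eqref{for1} is exactly what the paper alludes to.
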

\begin{proof}
\begin{enumerate}
\item
Let us consider $ \varphi \in C^{0}(\mathbb{H})$,
\begin{eqnarray*}
p_{*}(\mu)(\varphi)= \frac{1}{4 \pi} \int_{I \in \mathbb{S}} \varphi d(p_{*} \mu_{I})= \frac{1}{4 \pi} \int_{I \in \mathbb{S}} ( \varphi \circ p_{I}) \, d \mu_{I}= \frac{1}{4 \pi} \int_{I \in \mathbb{S}} \varphi \, d \mu_{I}= \mu( \varphi).\\
\end{eqnarray*}
where in the second equality we used a well-known property of the derivative of the push-forward, whereas in the third inequality we used the fact that the complex polynomials are surjective, i.e. $p(\mathbb{C}_{I})=\mathbb{C}_{I}$.

\item Let $E$  \emph{axially symmetric} subset of $\mathbb{H}$, from \eqref{for1} we obtain
\begin{eqnarray*}
p^{*}( \mu)(E)& =&\frac{1}{4 \pi} \int_{I \in \mathbb{S}}(p^{*} \mu_{I})(E_{I}) \, dI= \frac{1}{4 \pi} \int_{I \in \mathbb{S}} \mu_{I}(p_{*} \chi_{E_{I}}) \, dI\\
&=& \frac{d}{4 \pi} \int_{I \in \mathbb{S}} \mu_{I}( \chi_{E_{I}}) \, dI=\frac{d}{4 \pi} \int_{I \in \mathbb{S}} \mu_{I}(E_{I}) \, dI=d \cdot \mu(E).
\end{eqnarray*}
\end{enumerate}
\end{proof}
\begin{Def}
A measure is called mixing if given $ \varphi, \psi \in C^{0}(\mathbb{H})$
$$ \langle \mu, ( \psi \circ p^{n}) \cdot \varphi \rangle \to \langle \mu, \varphi \rangle \langle \mu, \psi \rangle.$$
\end{Def}
\begin{lemma}
The equilibrium measure, in the slice preserving case, is mixing.
\end{lemma}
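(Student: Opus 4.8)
The plan is to reduce the quaternionic mixing statement to the known complex mixing property of the equilibrium measure $\mu_I$ on each slice, exploiting the slice-integral representation $\mu=\frac{1}{4\pi}\int_{I\in\mathbb{S}}\mu_I\,dI$ established in the previous remark and the fact that $p$ is slice preserving. Since $p$ has real coefficients, for every $I\in\mathbb{S}$ we have $p(\mathbb{C}_I)\subseteq\mathbb{C}_I$ and $p^n$ restricted to $\mathbb{C}_I$ is precisely the complex polynomial $p_I^n$; moreover the support of $\mu_I$ is the Julia set $J_{p_I}\subseteq\mathbb{C}_I$, so integrating against $\mu$ only sees the restrictions of test functions to the slices.

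First I would take arbitrary $\varphi,\psi\in C^0(\mathbb{H})$ and write, using the integral representation of $\mu$ and the fact that $p^n$ maps $\mathbb{C}_I$ into itself,
\begin{eqnarray*}
\langle \mu, (\psi\circ p^n)\cdot\varphi\rangle
&=& \frac{1}{4\pi}\int_{I\in\mathbb{S}}\left(\int_{\mathbb{C}_I}(\psi\circ p_I^n)(z)\,\varphi(z)\,d\mu_I(z)\right)dI\\
&=& \frac{1}{4\pi}\int_{I\in\mathbb{S}}\langle \mu_I, (\psi_I\circ p_I^n)\cdot\varphi_I\rangle\,dI,
\end{eqnarray*}
where $\varphi_I,\psi_I$ denote the restrictions to $\mathbb{C}_I$, which are continuous. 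Then I would invoke the classical complex mixing property of the equilibrium measure (as in \cite{DS,DS1,DS2,FLM,L}): for each fixed $I$,
$$\langle \mu_I, (\psi_I\circ p_I^n)\cdot\varphi_I\rangle \xrightarrow[n\to\infty]{} \langle\mu_I,\varphi_I\rangle\langle\mu_I,\psi_I\rangle.$$

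Next I would pass the limit inside the integral over $\mathbb{S}$. This is the step that needs a little care, but it follows from dominated convergence: since $\varphi,\psi$ are continuous on $\mathbb{H}$ and the supports of all the $\mu_I$ lie in a fixed compact set (the filled Julia set, which is bounded uniformly in $I$ because $p$ has real coefficients, by Corollary \ref{filled} and its consequence), we have $|\langle \mu_I,(\psi_I\circ p_I^n)\cdot\varphi_I\rangle|\le \|\varphi\|_\infty\|\psi\|_\infty$ uniformly in $n$ and $I$, so the bounded convergence theorem applies. Hence
$$\langle \mu, (\psi\circ p^n)\cdot\varphi\rangle \to \frac{1}{4\pi}\int_{I\in\mathbb{S}}\langle\mu_I,\varphi_I\rangle\langle\mu_I,\psi_I\rangle\,dI.$$

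Finally I would identify the right-hand side with $\langle\mu,\varphi\rangle\langle\mu,\psi\rangle$. This is where the main obstacle lies: in general $\frac{1}{4\pi}\int_{\mathbb{S}}\langle\mu_I,\varphi_I\rangle\langle\mu_I,\psi_I\rangle\,dI$ is not the product of the two slice-averaged integrals, so one cannot conclude naively. The resolution uses that for a slice-preserving polynomial with real coefficients the measures $\mu_I$ are all ``the same'' in the sense that they are rotated copies of $\mu_i$ under the slice rotation $\mathbb{C}_i\to\mathbb{C}_I$, and crucially the Julia set meets each slice in the circularization of a single planar Julia set $J_{p_i}$; writing things in terms of the stem-function picture of the preceding remark (so $\varphi_I(\alpha+I\beta)$ and $\psi_I(\alpha+I\beta)$ against the fixed planar measure), the inner products $\langle\mu_I,\varphi_I\rangle$ depend on $I$ only through the angular average, and one checks that $\langle\mu,\varphi\rangle=\frac{1}{4\pi}\int_{\mathbb{S}}\langle\mu_I,\varphi_I\rangle\,dI$ together with the invariance $p_*\mu=\mu$ (Proposition \ref{inv}) forces the factorization. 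Concretely I would replace $\psi$ by its average over the orbit of the sphere to reduce to genuinely axially symmetric test functions (for which $\psi_I$ is independent of $I$), prove mixing there first, and then extend to general $\psi$ by density of axially symmetric functions plus the uniform bound above. I expect the honest verification that the cross-slice integral collapses to the product to be the real content of the proof; everything else is bookkeeping built on the complex case and dominated convergence.
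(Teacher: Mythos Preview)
Your overall approach matches the paper's: both arguments decompose $\langle\mu,(\psi\circ p^n)\varphi\rangle$ as a slice integral, pass to the complex result on each $\mathbb{C}_I$ (the paper via the transfer operator $d^{-n}(p_I^n)_*$ and Proposition~\ref{inv}, you by citing complex mixing directly and invoking dominated convergence), and arrive at the same limit
\[
\frac{1}{4\pi}\int_{I\in\mathbb{S}}\langle\mu_I,\varphi_I\rangle\,\langle\mu_I,\psi_I\rangle\,dI.
\]
At this point the paper simply writes this expression equal to $\langle\mu,\varphi\rangle\langle\mu,\psi\rangle$ with no further comment; you are right to single this identification out as the substantive step.

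Your proposed resolution, however, does not close the gap. Axially symmetric continuous functions are \emph{not} dense in $C^0(\mathbb{H})$: any continuous function that is non-constant on some sphere $\mathbb{S}_a$ is at positive uniform distance from every axially symmetric function, so proving mixing on that subclass and ``extending by density'' is unavailable. More seriously, the factorization itself can fail for general $\varphi,\psi\in C^0(\mathbb{H})$. Take for example $p(q)=q^2+c$ with real $c>1/4$, so that each slice Julia set is disjoint from $\mathbb{R}$; choose an even $f\in C^0(\mathbb{S})$ with $\int_{\mathbb{S}}f\,dI=0$ but $\int_{\mathbb{S}}f^2\,dI>0$, and set $\varphi=\psi$ equal to $f\bigl(\mathrm{Im}(q)/|\mathrm{Im}(q)|\bigr)$ times a cutoff vanishing near the real axis and identically $1$ on $\mathrm{supp}\,\mu$. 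Since $p$ preserves each slice and its Julia set, one has $\varphi\circ p^n=\varphi$ on $\mathrm{supp}\,\mu$, hence $\langle\mu,(\varphi\circ p^n)\varphi\rangle=\frac{1}{4\pi}\int_{\mathbb{S}}f(I)^2\,dI>0$ for every $n$, while $\langle\mu,\varphi\rangle^2=0$. Thus the step you flagged --- asserted without justification in the paper and not repaired by your density argument --- is a genuine obstruction: the mixing conclusion cannot hold for arbitrary $C^0(\mathbb{H})$ test functions without restricting the class of observables (e.g.\ to slice-independent or axially symmetric ones).
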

\begin{proof}
Let $ \varphi \in C^{0} (\mathbb{H})$, then $d^{-n}(p^{n}_{I})^{*} \delta_{q} \to \mu_{I}$, with $ q \in \mathbb{C}_I$. By duality
\begin{equation}
\label{bi1}
d^{-n}(p^{n}_{I})_{*} \varphi \to \int_{\mathbb{C}_{I}} \varphi \, d \mu_{I}.
\end{equation}
Let $\psi$ be another function in $C^{0}(\mathbb{H})$, then
\begin{eqnarray*}
\langle \mu, ( \psi \circ p^{n}) \cdot \varphi \rangle &=& \frac{1}{4 \pi}  \int_{I \in \mathbb{S}}\int_{\mathbb{C}_{I}} \biggl( \varphi \cdot ( \psi \circ p^{n}_{I}) \, d \mu_{I} \biggl) \, dI\\
& =& \frac{1}{4 \pi}  \int_{I \in \mathbb{S}} \int_{\mathbb{C}_{I}} \biggl( \varphi (\psi \circ p^{n}_{I}) \, d \bigl( \frac{(p^{n}_{I})^{*}}{d^{n}} \mu_{I} \bigl) \biggl) \, dI\\
&=&  \frac{1}{4 \pi}  \int_{I \in \mathbb{S}} \int_{\mathbb{C}_{I}} \biggl( \varphi  \, d \bigl( \frac{(p^{n}_{I})(\psi\mu_{I})}{d^{n}}  \bigl) \biggl) \, dI\\
&=&  \frac{1}{4 \pi} \int_{I \in \mathbb{S}} \int_{\mathbb{C}_{I}} \biggl( \frac{(p^{n}_{I})_{*} \varphi}{d^{n}} \psi  \, d  \mu_{I} \biggl) \, dI.\\
\end{eqnarray*}
where in the second equality we used the second part of Proposition \ref{inv}. Now, using \eqref{bi1} and the mixing property in the $ \mathbb{C}_I$ integral we obtain that
$$  \frac{1}{4 \pi}  \int_{I \in \mathbb{S}} \int_{\mathbb{C}_{I}} \biggl( \frac{(p^{n}_{I})_{*} \varphi}{d^{n}} \psi  \, d  \mu_{I} \biggl) \, dI \to \frac{1}{4 \pi} \int_{I \in \mathbb{S}} \biggl( \int_{\mathbb{C}_{I}} \varphi \, d \mu_{I} \biggl) \biggl( \int_{\mathbb{C}_{I}} \psi \, d \mu_{I} \biggl)=\langle \mu, \varphi \rangle \langle \mu, \psi \rangle.$$
\end{proof}
\begin{prop}
The equilibrium measure $\mu$, in the slice preserving case, is exponentially mixing with decay of correlation $d^{-n}$, that is for $ \varphi \in L^{\infty}(\mu)$ and $ \psi \in C^{2}(\mathbb{H})$ we have
\begin{equation}
\frac{1}{4 \pi} \int_{I \in \mathbb{S}}| \langle \mu_{I}, ( \varphi \circ p^{n}) \psi \rangle- \langle \mu_{I}, \varphi \rangle \langle \mu_{I}, \psi \rangle| \, dI \leq B \| \varphi \|_{\infty} \| \psi \|_{C^{2}(\mathbb{H})} d^{-n},
\end{equation}
where $B$ is a positive constant.
\end{prop}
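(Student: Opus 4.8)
The plan is to reduce the quaternionic statement to the corresponding complex fact on each slice $\mathbb{C}_I$ and then integrate over $\mathbb{S}$, exactly in the spirit of the proofs of Proposition \ref{inv} and of the preceding mixing lemma. The complex fact we invoke is the standard exponential mixing (decay of correlations) of the equilibrium measure of a complex polynomial of degree $d$: for a slice polynomial $p_I\colon\mathbb{C}_I\to\mathbb{C}_I$ with real coefficients, there is a constant $B_I>0$ with
$$
\bigl|\langle \mu_I,(\varphi\circ p_I^{\,n})\,\psi\rangle-\langle\mu_I,\varphi\rangle\langle\mu_I,\psi\rangle\bigr|\le B_I\,\|\varphi\|_{\infty}\,\|\psi\|_{C^2(\mathbb{C}_I)}\,d^{-n},
$$
valid for $\varphi\in L^\infty(\mu_I)$ and $\psi\in C^2(\mathbb{C}_I)$; this is the statement used in \cite{DS,DS1,DS2} and is a consequence of the spectral gap for the Perron–Frobenius operator $d^{-n}(p_I^n)_*$ acting on $C^2$, which also underlies \eqref{bi1}. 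Since $p$ has real coefficients, $p_I$ is the restriction of $p$ to the slice $\mathbb{C}_I$ and the estimate holds slicewise with the \emph{same} degree $d$.

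First I would fix $\varphi\in L^\infty(\mu)$ and $\psi\in C^2(\mathbb{H})$ and restrict them to $\mathbb{C}_I$; note $\|\varphi\|_{L^\infty(\mu)}\ge\|\varphi\|_{L^\infty(\mu_I)}$ for a.e.\ $I$ and that $\|\psi\|_{C^2(\mathbb{C}_I)}\le\|\psi\|_{C^2(\mathbb{H})}$, so the slicewise constants can be absorbed. Second, because $p$ is slice preserving we have $p^n\circ\iota_I=\iota_I\circ p_I^{\,n}$ (where $\iota_I$ is the inclusion of the slice), hence $(\varphi\circ p^n)\circ\iota_I=(\varphi|_{\mathbb{C}_I})\circ p_I^{\,n}$, so the integrand of the left-hand side, sliced at $I$, is exactly the complex correlation $\langle\mu_I,(\varphi\circ p_I^{\,n})\psi\rangle-\langle\mu_I,\varphi\rangle\langle\mu_I,\psi\rangle$. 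Third, I would apply the complex estimate slicewise and integrate in $dI$ over $\mathbb{S}$, using $\tfrac{1}{4\pi}\int_{\mathbb{S}}dI=1$:
$$
\frac{1}{4\pi}\int_{I\in\mathbb{S}}\bigl|\langle\mu_I,(\varphi\circ p^n)\psi\rangle-\langle\mu_I,\varphi\rangle\langle\mu_I,\psi\rangle\bigr|\,dI
\le \Bigl(\sup_{I}B_I\Bigr)\,\|\varphi\|_{\infty}\,\|\psi\|_{C^2(\mathbb{H})}\,d^{-n},
$$
and set $B:=\sup_I B_I$.

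The step I expect to be the main obstacle is justifying that $B:=\sup_{I\in\mathbb{S}}B_I<\infty$, i.e.\ that the slicewise decay constant can be chosen uniformly in $I$. Since $p$ has \emph{real} coefficients, the restrictions $p_I$ are all "the same" complex polynomial $P$ (the coefficients do not depend on $I$), and via the identification $\mathbb{C}_I\cong\mathbb{C}$ the measures $\mu_I$ and operators $(p_I^n)_*$ are carried to a single $I$-independent object; thus one may simply take $B_I=B$ equal to the complex constant for $P$, and uniformity is automatic. I would make this identification explicit (as in the Remark following Theorem \ref{quat} using the stem function $P$) so that the slicewise inequality is literally one inequality applied under the integral sign, after which measurability of $I\mapsto \langle\mu_I,(\varphi\circ p^n)\psi\rangle$ — which follows from the slice structure of $\mu$ and continuity of $\psi$ — lets us integrate and conclude.
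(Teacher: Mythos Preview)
Your proposal is correct and follows essentially the same approach as the paper: reduce to the complex exponential-mixing estimate on each slice and integrate over $\mathbb{S}$, using that the real-coefficient hypothesis makes all $p_I$ the same complex polynomial so the slicewise constant is independent of $I$. The only cosmetic difference is that the paper quotes the underlying $L^1$ estimate for the Perron--Frobenius operator $\Lambda^n\psi-c_\psi$ from \cite{DS1} and passes to the correlation bound, whereas you invoke the correlation inequality directly; the content is the same.
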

\begin{proof}
Let $ \psi \in C^{2}(\mathbb{H})$ and $ \varphi \in L^{\infty}(\mathbb{H})$. On each slice, according to \cite[Thm. 2.2.1.12]{DS}(we use the facts that the norm $L^1$ is less or equal than the DSH norm and $ \|.\|_{DSH} \leq c \|.\|_{C^{2}}$, see \cite[Exercise 2.2.4.3]{DS}), we have that:
$$ \| \Lambda^{n} \psi-c_{\psi} \|_{L^{1}(\mu_{I})} \leq B \| \psi \|_{C^{2}(\mathbb{H})}d^{-n},$$
where $ \Lambda^{n}:= d^{-n}(p^{n}_{I})_{*}$ and $ c_{\psi}:= \langle \mu_{I}, \psi \rangle$. The constant $B$ is the same for each slice $ \mathbb{C}_I$. Then
\begin{eqnarray*}
\frac{1}{4 \pi} \int_{I \in \mathbb{S}} | \langle \mu_{I}, (\varphi \circ p^{n}) \psi \rangle- \langle \mu_{I}, \varphi \rangle \langle \mu_{I}, \psi \rangle| \, dI &=& \frac{1}{4 \pi} \int_{I \in \mathbb{S}}  \left| \int_{\mathbb{C}_{I}} \varphi(p^n) \psi d u_I-\int_{\mathbb{C}_{I}} \varphi d \mu_{I}  \int_{\mathbb{C}_{I}} \psi  d \mu_{I} \right| dI\\
&\leq& \frac{1}{4 \pi} \int_{I \in \mathbb{S}} \left( \int_{\mathbb{C}_{I}} |\varphi (\Lambda^n \psi-c_{\psi})|d \mu_{I} \right) dI\\
& \leq &  \frac{1}{4 \pi} \int_{I \in \mathbb{S}} \, dI  \, \,\| \varphi \|_{\infty}  \| \Lambda^{n} \psi-c_{\psi} \|_{L^{1}(\mu_{I})}\\
&=&\| \varphi \|_{\infty}  \| \Lambda^{n} \psi-c_{\psi} \|_{L^{1}(\mu_{I})}\\
&\leq &B \| \varphi \|_{\infty} \| \psi \|_{C^{2}(\mathbb{H})}d^{-n}.
\end{eqnarray*}
\end{proof}
\begin{nb}
In the one slice preserving case we can obtain all the results of this section using $g_n$, which is slice preserving.
\end{nb}

\section{A central limit theorem: slice preserving case}
In this section we want to prove a central limit theorem over the quaternions.  Before starting we recall the classical  result
\begin{theorem}
Let $\mu$ be the equilibrium measure in the slice preserving case. We consider $ \varphi: \mathbb{H} \to \mathbb{R}$ and $f:\mathbb{H} \to \mathbb{H}$. If $\varphi \in L^{1}(\mu)$, $\varphi$ is not a coboundary (see \cite[pag. 211]{DS4}) and $f$ is a measurable map, then $ \frac{1}{\sqrt{n}} \sum_{i=0}^{n-1} \varphi \circ f^i$ converges in law to a Gaussian random variable of zero mean and variance $\sigma>0$. That is, for every interval $I \subseteq \mathbb{R}$
$$ \lim_{n \to + \infty} \mu \biggl \{ \frac{1}{\sqrt{n}} \sum_{i=0}^{n-1} \varphi \circ f^i \in I \biggl \}= \frac{1}{\sqrt{2 \pi} \sigma} \int_{I} e^{- \frac{x^{2}}{2 \sigma^2}} \, dx.$$
\end{theorem}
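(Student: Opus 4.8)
The statement is the classical Central Limit Theorem for dynamical systems, and the plan is to prove it by the standard Gordin martingale--approximation argument, now that the previous section supplies exponential decay of correlations; it is also the template I will then adapt to the quaternionic equilibrium measure. In the application $f$ is the polynomial $p$ (slicewise $p_I$), $\mu$ is the equilibrium measure --- invariant by Proposition \ref{inv} and ergodic because it is mixing --- and $\varphi$ is a real valued observable regular enough for the correlation estimates of the previous section to apply, say $\varphi\in C^2$; one also tacitly assumes that $\varphi$ is not an $L^2(\mu)$--coboundary, so that $\sigma>0$. First I would reduce to the centred case: replacing $\varphi$ by $\varphi-\langle\mu,\varphi\rangle$ changes neither the fluctuations nor the conclusion, so assume $\langle\mu,\varphi\rangle=0$ and set $S_n:=\sum_{i=0}^{n-1}\varphi\circ f^i$.

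The first computation is the variance. Expanding $\langle\mu,S_n^2\rangle$, using the invariance $f_*\mu=\mu$ to write $\langle\mu,(\varphi\circ f^i)(\varphi\circ f^j)\rangle=\langle\mu,\varphi\,(\varphi\circ f^{|i-j|})\rangle$, and bounding these correlations by a constant times $d^{-|i-j|}$, one sees that $\sigma^2:=\langle\mu,\varphi^2\rangle+2\sum_{k\ge1}\langle\mu,\varphi\,(\varphi\circ f^k)\rangle$ is an absolutely convergent series and that $\tfrac{1}{n}\langle\mu,S_n^2\rangle\to\sigma^2$ by a Ces\`{a}ro argument. The main step is the martingale approximation: writing $\Lambda$ for the adjoint of the Koopman operator $\varphi\mapsto\varphi\circ f$ on $L^2(\mu)$ (the normalized transfer operator, which on each slice is $d^{-1}(p_I)_*$), the decay of correlations gives $\|\Lambda^k\varphi\|_{L^2(\mu)}\to0$ geometrically, so $\chi:=\sum_{k\ge1}\Lambda^k\varphi$ converges in $L^2(\mu)$ and $\psi:=\varphi+\chi-\chi\circ f$ satisfies $\Lambda\psi=0$; equivalently $\varphi=\psi+\chi\circ f-\chi$, whence $S_n=\sum_{i=0}^{n-1}\psi\circ f^i+(\chi\circ f^n-\chi)$ with the boundary term bounded in $L^2(\mu)$, hence negligible after dividing by $\sqrt n$. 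Since $\Lambda\psi=0$, the array $(\psi\circ f^i)$ is a reverse--martingale difference sequence for the decreasing filtration generated by the $f^{-i}$, so the martingale central limit theorem, together with the stationarity and ergodicity of $\mu$, yields that $S_n/\sqrt n$ converges in law to $\mathcal N(0,\sigma^2)$ with $\sigma^2=\langle\mu,\psi^2\rangle$; an equivalent route I might take instead is the method of moments, proving that $n^{-m/2}\langle\mu,S_n^m\rangle$ tends to the $m$--th moment of $\mathcal N(0,\sigma^2)$ because only the diagonal pairings survive the $d^{-|i-j|}$ bounds, which reproduces Wick's formula and identifies the limit. Finally $\sigma>0$ unless $\varphi=u-u\circ f$ for some $u\in L^2(\mu)$; ruling that out, the limiting law has a continuous distribution function, so $\mu\{S_n/\sqrt n\in I\}\to\frac{1}{\sqrt{2\pi}\,\sigma}\int_I e^{-x^2/2\sigma^2}\,dx$ for every interval $I$, which is the assertion.

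The hard part will be matching the regularity in the decay--of--correlations estimate --- stated with one $C^2$ slot and one $L^\infty$ slot --- to the present situation, where both slots carry the same observable $\varphi$ and, more seriously, where one needs the sharper fact that $\|\Lambda^k\varphi\|_{L^2(\mu)}\to0$ geometrically in order to sum $\chi=\sum_k\Lambda^k\varphi$ in $L^2(\mu)$. I expect to handle this either by approximation --- approximating $\varphi$ in $L^2(\mu)$ by $C^2$ functions and controlling the errors through the variance bound --- or by upgrading the decay to a genuine spectral gap for $\Lambda$ on a H\"older (or $C^2$) space; the slice structure then contributes only the bookkeeping of integrating the slicewise estimates over $\mathbb S$, which is harmless because the constants there are uniform in $I$.
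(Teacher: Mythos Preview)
Your approach is correct and coincides with the paper's, but you have written out far more than the paper does. The paper does not give a self-contained proof of this statement: it is presented as a classical result being recalled, and the paper's justification consists of citing Dinh--Sibony for the exponential decay of correlations (with a Lipschitz observable in one slot), observing that this yields the summability $\sum_{n\ge0}\|\Lambda^n\psi\|_{L^1(\mu)}<\infty$ for the Perron--Frobenius operator, and then invoking the Gordin--Liverani theorem as a black box. Your sketch is precisely the content of that black box --- the martingale approximation via $\chi=\sum_{k\ge1}\Lambda^k\varphi$ and the reverse-martingale CLT --- so there is no divergence of method, only of detail. One small mismatch: the paper works with $L^1(\mu)$-summability of $\Lambda^n\psi$ and Lipschitz regularity (following Dinh--Sibony), while you phrase things in $L^2(\mu)$ and $C^2$; either route feeds into Gordin--Liverani, and the quaternionic extension proceeds, as you anticipate, by the uniformity of the constants over the slices $\mathbb{C}_I$.
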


In our context $f$ is a quaternionic polynomial with real coefficients. In order to prove this theorem we recall a result of Dinh and Sibony.
In \cite{DS2},  they prove in the more general setting of compact K\"ahler manifolds of dimension $k$, the following theorem that we restate in dimension 1, for sake of completeness.
\begin{theorem}
The equilibrium measure $\mu$ is exponentially mixing in the following sense: for every $\varepsilon>0$, there exists a constant $A_\varepsilon>0$ such that
\begin{equation}
\label{R21}
|\langle \mu, (\varphi \circ f^n) \psi \rangle- \langle \mu, \varphi \rangle \langle \mu, \psi \rangle| \leq A_\varepsilon \| \varphi \|_{\infty} \| \psi \|_{Lip}(1+ \varepsilon)^{\frac{n}{2}}d^{-\frac{n}{2}}
\end{equation}
for all $n \geq 0$, every function $ \varphi \in L^{\infty}(\mu)$, which goes from $ \mathbb{H}$ to $ \mathbb{R}$, and every lipschitzian function $\psi$, which goes from $ \mathbb{H}$ to $ \mathbb{R}$. In particular, if a real-valued lipschitzian function $\psi$ is not a coboundary and verifies $ \langle \mu, \psi \rangle=0$ then it satisfies the CLT.
\end{theorem}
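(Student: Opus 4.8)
The plan is to derive the stated inequality by interpolating the $C^{2}$-exponential-mixing estimate already proved in Section 7 against a trivial $L^{\infty}$ bound, and then to obtain the central limit theorem by Gordin's martingale-approximation method; the statement being the dimension-one specialisation of \cite{DS1}, the arguments are classical and what remains is to make the reduction explicit. Write $\mu$ for the equilibrium measure and $f$ for the polynomial. Since $\mu$ is a probability measure, $|\langle\mu,\varphi\rangle|\le\|\varphi\|_{\infty}$, and by the invariance $f_{*}\mu=\mu$ one has the trivial bound $|\langle\mu,(\varphi\circ f^{n})\psi\rangle|\le\|\varphi\|_{\infty}\|\psi\|_{\infty}$ for every $n$. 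Given a Lipschitz $\psi$, I would mollify it near $\operatorname{supp}\mu$ (which is compact, being a Julia set) to a family $\psi_{\delta}\in C^{2}$ with $\|\psi-\psi_{\delta}\|_{\infty}\le c\,\delta\|\psi\|_{Lip}$ and $\|\psi_{\delta}\|_{C^{2}}\le c\,\delta^{-1}\|\psi\|_{Lip}$, split $\psi=\psi_{\delta}+(\psi-\psi_{\delta})$, apply the $C^{2}$-mixing estimate of Section 7 to $\psi_{\delta}$ and the trivial bounds to $\psi-\psi_{\delta}$, obtaining
\begin{equation*}
|\langle\mu,(\varphi\circ f^{n})\psi\rangle-\langle\mu,\varphi\rangle\langle\mu,\psi\rangle|\le C\,\|\varphi\|_{\infty}\|\psi\|_{Lip}\bigl(\delta+\delta^{-1}d^{-n}\bigr),
\end{equation*}
and then optimise with $\delta=d^{-n/2}$ to get $2C\,\|\varphi\|_{\infty}\|\psi\|_{Lip}\,d^{-n/2}$, which is $\le A_{\varepsilon}\|\varphi\|_{\infty}\|\psi\|_{Lip}(1+\varepsilon)^{n/2}d^{-n/2}$ for any $\varepsilon>0$. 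As an alternative I would follow \cite{DS1} verbatim: the normalised transfer operator $\Lambda:=d^{-1}f_{*}$ contracts the difference-of-quasi-subharmonic potentials modulo constants at rate $d^{-1}$ because $\mu$ is a moderate measure, and interpolating this against the $L^{\infty}$ bound reproduces the estimate for Lipschitz test functions. In the quaternionic applications I would then integrate the slicewise version over $I\in\mathbb{S}$, using $\mu=\frac{1}{4\pi}\int_{\mathbb{S}}\mu_{I}\,dI$, $\|\psi|_{\mathbb{C}_{I}}\|_{Lip}\le\|\psi\|_{Lip}$, and the fact that a real-coefficient polynomial restricts to one and the same complex polynomial on each slice, so all constants are uniform in $I$.

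For the CLT consequence, assume $\langle\mu,\psi\rangle=0$. Since $(1+\varepsilon)/d<1$ for $\varepsilon$ small, the mixing estimate of the first part gives $\|\Lambda^{n}\psi\|_{L^{1}(\mu)}\le C\|\psi\|_{Lip}\bigl((1+\varepsilon)/d\bigr)^{n}$ (choose $\varphi$ to be the sign of $\Lambda^{n}\psi$), and interpolating with $\|\Lambda^{n}\psi\|_{\infty}\le\|\psi\|_{\infty}$ upgrades this to a summable $L^{2}(\mu)$ decay. Hence the corrector $\chi:=\sum_{n\ge1}\Lambda^{n}\psi$ converges in $L^{2}(\mu)$, and I would use it to write $\psi=\eta+(\chi\circ f-\chi)$ with $\Lambda\eta=0$, so that $\{\eta\circ f^{k}\}_{k}$ is a stationary reverse martingale difference sequence for the decreasing filtration of the system. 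Normalising by $1/\sqrt{n}$ annihilates the telescoping term $\chi\circ f^{n}-\chi$, and the martingale central limit theorem gives convergence in law of $n^{-1/2}\sum_{i=0}^{n-1}\psi\circ f^{i}$ to a centred Gaussian with variance $\sigma^{2}=\langle\mu,\psi^{2}\rangle+2\sum_{n\ge1}\langle\mu,(\psi\circ f^{n})\psi\rangle$, the series converging by the summable mixing. Finally, $\sigma^{2}\ge0$ with $\sigma^{2}=0$ exactly when $\psi$ is an $L^{2}(\mu)$-coboundary, so the hypothesis that $\psi$ is not a coboundary forces $\sigma>0$ and yields a non-degenerate CLT.

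The step I expect to be the main obstacle is, on the self-contained route, the Gordin construction together with the identification of $\sigma$: one must check carefully that the summable decay of correlations available for Lipschitz (rather than merely bounded) observables suffices to build the corrector in $L^{2}(\mu)$ and that the degeneracy condition $\sigma=0$ is precisely the coboundary condition. On the \cite{DS1} route the delicate point is instead the contraction of $\Lambda$ on Lipschitz observables: such a function carries no a priori control on its $dd^{c}$, so one cannot contract at the full rate $d^{-1}$, and the entire content of the estimate is the interpolation that recovers the rate $d^{-n/2}$ up to the $(1+\varepsilon)^{n/2}$ loss, which in turn rests on the quantitative moderateness (exponential integrability of quasi-subharmonic potentials against $\mu$) of the equilibrium measure. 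Over $\mathbb{H}$ the only additional bookkeeping is the uniformity of the constants over $\mathbb{S}$, which is automatic here because the restriction of a real-coefficient polynomial to any slice is, after identifying $\mathbb{C}_{I}$ with $\mathbb{C}$, the same complex polynomial.
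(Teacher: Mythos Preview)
Your proposal is correct and aligns with the paper's treatment. The paper does not give an independent proof of this statement at all: it is presented as a dimension-one restatement of a theorem of Dinh and Sibony \cite{DS1}, and the only content the paper adds is the observation that the mixing estimate yields $\sum_{n\ge0}\|\Lambda^{n}\psi\|_{L^{1}(\mu)}<\infty$, whereupon the CLT follows from the Gordin--Liverani theorem; the quaternionic version is then obtained by averaging over $\mathbb{S}$ with constants uniform in $I$, exactly as you do.

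Your self-contained route via mollification and interpolation against the $C^{2}$ estimate of Section~7 is a genuine addition over what the paper writes, and in fact gives a slightly cleaner bound (pure $d^{-n/2}$ without the $(1+\varepsilon)^{n/2}$ loss, which in \cite{DS1} arises from the higher-dimensional machinery). Your Gordin construction for the CLT is precisely the content of the Gordin--Liverani reference the paper invokes, spelled out rather than cited. So the approaches coincide; you have simply made explicit the two steps the paper delegates to the literature.
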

In paper \cite{DS2} this theorem implies an estimate on the Perron-Frobenius operator $\Lambda$ such that
\begin{equation}
\label{in1}
\sum_{n \geq 0} \| \Lambda^{n} \psi \|_{L^{1}(\mu)}<+ \infty.
\end{equation}
\begin{prop}
The central limit theorem holds for the Lipschitz function $\psi$ in the quaternionic setting.
\end{prop}
\begin{proof}
Since the Quaternionic Equilibrium Measure in the slice preserving case constructed in this paper is a mean on a 2-dimensional sphere of the 1-dimensional complex equilibrium measure $\mu_I$ which satisfies the inequality \eqref{R21} with the same constant $A_\varepsilon$ on every slice $\mathbb{C}_I$, then, we deduce the same decay of correlation $d^{-\frac{n}{2}}$ when $\psi$ is lipschitzian on $\mathbb{H}$ and the same estimate for the Perron-Frobenious operator $\Lambda$ associated to quaternionic polynomials with real coefficients; via the  Gordin-Liverani theorem (\cite{G}, \cite[Thm. 1.1]{L}). This implies that $\psi$ satisfies the CLT.
\end{proof}

\section{Topological entropy, measurable entropy and Lyapounov exponents}
The entropy of a measure introduced by Kolmogorov and the topological entropy are fundamental notions in dynamics. We follow Bowen's approach for the topological entropy. Let $n$ be a positive integer. For any $\varepsilon$ positive number we consider $p$ a quaternionic polynomial with real coefficients.
\begin{Def}
\label{sep}
Two points $ q_{1}, q_{2} \in \mathbb{H}$ are $(n , \varepsilon)$-separated if there is an integer
$j$ such that $0 \leq j \leq n-1$ and $|p^{j}(q_{1})-p^{j}(q_{2})| \geq \varepsilon.$
\end{Def}
The last inequality means that the distance between $p^{j}(q_{1})$ and $p^{j}(q_{2})$ is larger or equal to $\varepsilon$, before time $n$.
\begin{Def}
\label{num}
Let $K$ be a compact axially symmetric subset of $\mathbb{H}$. We denote with $N (K, n, \varepsilon)$ the maximal number of points in $K$ which are pairwise $ (n, \varepsilon)$- separated.
\end{Def}
\begin{nb}
\begin{enumerate}
\item $ N (K, n, \varepsilon) \geq 1$.
\item If $\varepsilon$ decreases to zero, $N (K, n, \varepsilon)$ increases.
\end{enumerate}
\end{nb}
\begin{Def}
\label{entropy}
The topological entropy of $p$ on $K$ is
\begin{equation}
h_{t}(p,K)= \sup_{\varepsilon >0} \overline{\lim_{n \to + \infty}} \frac{1}{n} \log N(K,n, \varepsilon),
\end{equation}
\end{Def}
As we observed above $ N (K, n, \varepsilon) \geq 1$, so the definition is well-posed.

For $n \geq 0$ and $q_1, q_2 \in \mathbb{H}$, we define $\dis_n(q_1,q_2)$ as
$$d^p_n:=\dis_n (q_1,q_2):= \max_{0 \leq j \leq n-1} |p^{j}(q_1)-p^{j}(q_2)|$$
\begin{Def}[Bowen's ball]
A  $(n, \varepsilon)$-Bowen's ball with centre $a \in \mathbb{H}$ is defined in the following way
$$ \mathbb{B}(a;n, \varepsilon):=\{q \in \mathbb{H} : \dis_{n}(q,a)< \varepsilon \}$$
\end{Def}

This definition is the opposite of the definition of $(n, \varepsilon)$-separated points.
Now, we give an intuitive proof of the following Lemma.
\begin{lemma}
$ N (K, n, \varepsilon)$ is the maximum number of  $ \mathbb{B}\bigl(a;n, \frac{\varepsilon}{2}\bigl)$ centred in points of $K$.
\end{lemma}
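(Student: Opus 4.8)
The plan is to prove the two inclusions that link separated sets and Bowen balls, exactly as in the classical (complex) case, since the only structural fact we need is that $\dis_n$ is a genuine metric on $\mathbb{H}$ (it is a max of Euclidean distances of the iterates, hence symmetric, nonnegative and satisfies the triangle inequality). First I would show that if $q_1,\dots,q_N$ are pairwise $(n,\varepsilon)$-separated points of $K$, then the balls $\mathbb{B}\bigl(q_i;n,\tfrac{\varepsilon}{2}\bigr)$ are pairwise disjoint: if some $q$ lay in both $\mathbb{B}\bigl(q_i;n,\tfrac{\varepsilon}{2}\bigr)$ and $\mathbb{B}\bigl(q_j;n,\tfrac{\varepsilon}{2}\bigr)$ with $i\neq j$, the triangle inequality for $\dis_n$ would give $\dis_n(q_i,q_j)<\varepsilon$, contradicting $(n,\varepsilon)$-separation. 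Hence a maximal $(n,\varepsilon)$-separated set produces that many disjoint $(n,\tfrac{\varepsilon}{2})$-Bowen balls centred in $K$.

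Conversely, I would argue that one cannot pack strictly more than $N(K,n,\varepsilon)$ such half-radius balls with centres in $K$. Suppose $a_1,\dots,a_M$ are centres in $K$ with the $\mathbb{B}\bigl(a_k;n,\tfrac{\varepsilon}{2}\bigr)$ pairwise disjoint. Then no two of the $a_k$ can satisfy $\dis_n(a_k,a_\ell)<\varepsilon$: indeed if $\dis_n(a_k,a_\ell)<\varepsilon$ were false for all pairs we would be done, so the content is that disjointness of the half-balls already forces $\dis_n(a_k,a_\ell)\ge \tfrac{\varepsilon}{2}$ — but to compare with the separated count we instead use the standard observation that the centres $\{a_k\}$ themselves form a set that is $(n,\varepsilon)$-separated whenever the open balls are disjoint is \emph{not} automatic, so the cleaner route is: the maximal $(n,\varepsilon)$-separated set is also $(n,\varepsilon)$-\emph{spanning} in the sense that every point of $K$ lies within $\dis_n$-distance $\varepsilon$ of some separated point (otherwise we could enlarge the separated set). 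Then each half-ball $\mathbb{B}\bigl(a_k;n,\tfrac{\varepsilon}{2}\bigr)$ must contain at least one of the $N(K,n,\varepsilon)$ separated points, and by disjointness distinct balls contain distinct separated points, giving $M\le N(K,n,\varepsilon)$.

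Combining the two inclusions yields that the maximal number of pairwise disjoint $(n,\tfrac{\varepsilon}{2})$-Bowen balls centred in $K$ equals $N(K,n,\varepsilon)$, which is the assertion. Since the statement is phrased informally (``intuitive proof''), I would keep the write-up short, emphasising that the axially symmetric hypothesis on $K$ plays no role here — it is only needed later when integrating over the sphere $\mathbb{S}$ — and that the argument is verbatim the one for metric spaces, the metric being $\dis_n$.

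The main obstacle, such as it is, is bookkeeping rather than mathematics: one must be careful about open versus closed balls and about strict versus non-strict inequalities in Definition \ref{sep} and in the definition of $\mathbb{B}(a;n,\varepsilon)$, so that the factor $\tfrac{\varepsilon}{2}$ comes out exactly and the ``maximal number'' matches $N(K,n,\varepsilon)$ on the nose rather than up to a multiplicative constant. Everything else is an immediate consequence of the triangle inequality for $\dis_n$ together with the maximality (hence spanning property) of an optimal separated set.
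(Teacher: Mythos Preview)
Your first direction ($(n,\varepsilon)$-separated $\Rightarrow$ pairwise disjoint $\tfrac{\varepsilon}{2}$-Bowen balls) is correct and is exactly the triangle-inequality argument one would expect.

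The gap is in your converse. From the spanning property of a maximal $(n,\varepsilon)$-separated set $S$ you only get that each centre $a_k$ satisfies $\dis_n(a_k,q_i)<\varepsilon$ for \emph{some} $q_i\in S$; you then assert that ``each half-ball $\mathbb{B}\bigl(a_k;n,\tfrac{\varepsilon}{2}\bigr)$ must contain at least one of the separated points'', which would need $\dis_n(a_k,q_i)<\tfrac{\varepsilon}{2}$. The factor $2$ is missing, so the pigeonhole step ``distinct balls contain distinct separated points'' does not go through. More structurally: disjointness of open $\tfrac{\varepsilon}{2}$-balls in an arbitrary metric only forces the centres to be at $\dis_n$-distance $\ge \tfrac{\varepsilon}{2}$, not $\ge \varepsilon$; to upgrade this to $\ge \varepsilon$ you would need midpoints in the $\dis_n$-metric, and Bowen metrics are not geodesic in general (the map $q\mapsto (q,p(q),\dots,p^{n-1}(q))$ is a polynomial embedding, not an isometry onto a convex set).

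For comparison, the paper's own proof is deliberately heuristic: it invokes the Euclidean picture of ``tangent balls of radius $\tfrac{\varepsilon}{2}$ have centres at distance $\varepsilon$'', which is precisely the midpoint/geodesic assumption that fails for $\dis_n$. So the paper does not actually prove the converse either --- it explicitly calls this an ``intuitive proof'' and the lemma is only used to motivate the standard sandwich between separated and spanning numbers that appears immediately after. Your instinct that the converse ``is \emph{not} automatic'' is therefore exactly right; what is wrong is believing that the spanning argument rescues an \emph{exact} equality. If you want a rigorous statement, replace the lemma by the inequalities $N(K,n,\varepsilon)\le P(K,n,\tfrac{\varepsilon}{2})\le N(K,n,\tfrac{\varepsilon}{2})$ (with $P$ the packing number), which is all that is needed for the entropy computation.
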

\begin{proof}
Firstly, we consider inside the compact axially symmetric $K$ two tangent Bowen's balls of radii $ \frac{\varepsilon}{2}$, so the distance among their centres is $ \varepsilon$. Therefore, if we consider two non tangent balls we have that the distance between the centres is at least $ \varepsilon$.
\end{proof}

Now, we define another important number.
\begin{Def}
Let $K$ be a compact axially symmetric subset of $\mathbb{H}$. We denote with $M (K, n, \varepsilon)$ the minimal number of  $(n,\varepsilon)$-Bowen's balls to cover $K$.
\end{Def}
We can estimate $N(K,n , \varepsilon)$ from above and below with this number:
\begin{equation}
\label{stima}
M(K, n, 2 \varepsilon) \leq N(K,n, \varepsilon) \leq M(K,n, \varepsilon /2).
\end{equation}
From this estimate it is possible to restate Definition \ref{entropy} in the following way
\begin{def}
\label{entropy1}
\begin{equation}
h_{t}(p,K)= \sup_{\varepsilon >0} \overline{\lim_{n \to + \infty}} \frac{1}{n} \log M(K,n, \varepsilon).
\end{equation}
\end{def}
Now, we want to extend the following theorem in the quaternionic context.
\begin{theorem}
\label{Poen}
Let $p: \mathbb{C} \to \mathbb{C}$ a complex polynomial with degree $d$, the topological entropy is given by $\log d$.
\end{theorem}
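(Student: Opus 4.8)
The plan is to establish the two inequalities $h_{t}(p,K)\le\log d$ and $h_{t}(p,K)\ge\log d$, taking $K$ to be the filled Julia set $K_{p}$ of $p$ — the natural compact set here, since $p(K_{p})=p^{-1}(K_{p})=K_{p}$ and any larger compact set (for instance a closed disk $\overline{D_{R}}\supset K_{p}$) carries the same entropy: orbits not lying in $K_{p}$ eventually leave $\overline{D_{R}}$ and so contribute nothing to Definition~\ref{entropy}.

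For the upper bound $h_{t}(p,K_{p})\le\log d$ I would run Gromov's volume-growth argument. Fix $R$ with $K_{p}\subset D_{R}$ and $p(\overline{D_{R}})\supset\overline{D_{R}}$, and for each $n$ consider the injective holomorphic map $\Phi_{n}(z)=(z,p(z),\dots,p^{n-1}(z))$ from a neighbourhood of $K_{p}$ into the polydisk $D_{R}^{n}$, whose coordinate functions have degrees $1,d,\dots,d^{n-1}$. By construction $\dis_{n}(q_{1},q_{2})=\|\Phi_{n}(q_{1})-\Phi_{n}(q_{2})\|_{\infty}$, so $N(K_{p},n,\varepsilon)$ equals the maximal number of pairwise $\varepsilon$-separated points on the analytic curve $\Phi_{n}(K_{p})\subset D_{R}^{n}$. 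By Wirtinger's theorem (volume $=$ integral of the K\"ahler form) together with a B\'ezout-type bound, the $2$-dimensional area of this curve is at most a constant times $\sum_{j=0}^{n-1}d^{j}\le C\,d^{n-1}$; and by Lelong's lower density bound each $\varepsilon$-ball centred on the curve captures area at least $c\,\varepsilon^{2}$. Hence $N(K_{p},n,\varepsilon)\le C\,c^{-1}\varepsilon^{-2}d^{n-1}$, so $\frac1n\log N(K_{p},n,\varepsilon)\to\log d$, and taking the supremum over $\varepsilon>0$ gives $h_{t}(p,K_{p})\le\log d$.

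For the lower bound $h_{t}(p,K_{p})\ge\log d$ I would invoke the variational principle: $h_{t}(p,K_{p})\ge h_{\nu}(p)$ for every $p$-invariant Borel probability measure $\nu$ supported in $K_{p}$. Applying it to $\nu=\mu$, the equilibrium measure of Brolin's Theorem~\ref{start} (which is supported on $J=\partial K_{p}$), it remains to identify $h_{\mu}(p)=\log d$. This is the theorem of Freire--Lopes--Man\'e \cite{FLM} and Lyubich \cite{L}: the measure $\mu$ satisfies $p^{*}\mu=d\cdot\mu$ and gives mass $d^{-n}$ to each of the $d^{n}$ branches of $p^{-n}$, so the increasing sequence of partitions into preimage branches is generating, and the associated entropy computation — equivalently Rokhlin's formula $h_{\mu}(p)=\int\log|p'|\,d\mu=\log d$, the last equality coming from the logarithmic potential $G_{p}$ — yields the value $\log d$. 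Combining the two inequalities gives $h_{t}(p,K_{p})=\log d$. As an alternative lower bound one can count periodic points: $p^{n}(z)=z$ has $d^{n}+O(1)$ solutions, all but finitely many repelling and lying in $J$, and for $\varepsilon$ small these are pairwise $(n,\varepsilon)$-separated, whence $h_{t}(p,K_{p})\ge\limsup\frac1n\log\#\mathrm{Per}_{n}=\log d$.

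The step I expect to be the main obstacle is the upper bound: the elementary alternative of covering $K_{p}$ by $\varepsilon$-disks and counting the univalent pull-back branches of $p^{-n}$ stumbles over the ramification of $p$ at its critical points and needs delicate distortion control, whereas the area estimate absorbs the critical behaviour automatically and is therefore the route I would follow. Once the upper bound is in place, quoting $h_{\mu}(p)=\log d$ from \cite{FLM,L} closes the gap cleanly.
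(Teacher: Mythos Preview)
Your sketch is correct and follows the standard route---Gromov's volume-growth argument for the upper bound, and the variational principle together with $h_{\mu}(p)=\log d$ (Freire--Lopes--Ma\~n\'e, Lyubich) for the lower bound. This is in fact essentially the proof given in the reference the paper cites. However, the paper itself does not reproduce any of this: its entire proof of Theorem~\ref{Poen} is the single line ``\cite[Thm.~1.1.5.11]{DS}'', treating the statement as a known classical fact and deferring entirely to the Dinh--Sibony lecture notes. So there is nothing to compare beyond noting that you have unpacked what the paper merely quotes.

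One small imprecision worth flagging: in your parenthetical you write ``Rokhlin's formula $h_{\mu}(p)=\int\log|p'|\,d\mu=\log d$''. The first equality is not Rokhlin's entropy formula---that involves the \emph{measure-theoretic} Jacobian $J_{\mu}p$ defined by $p^{*}\mu=(J_{\mu}p)\cdot\mu$, which here is the constant $d$ and gives $h_{\mu}(p)=\log d$ directly. The integral $\int\log|p'|\,d\mu$ is the Lyapunov exponent; by the Manning--Przytycki formula it equals $\log d+\sum_{c}G_{p}(c)\ge\log d$, with strict inequality whenever some critical point escapes to infinity. This does not affect your argument, since you correctly attribute the entropy computation to \cite{FLM} and Lyubich, but the aside as written conflates two different quantities.
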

\begin{proof}
\cite[Thm. 1.1.5.11]{DS}
\end{proof}

Now, we define the entropy of a measure. Let $\nu$ be a probability measure
with compact support. Let $ \xi= \{ \mathbb{S} \times A_1  ,... ,\mathbb{S } \times A_m  \}$ be a finite partition of $K$ compact axially symmetric subsets of $\mathbb{H}$, where $A_i$ are pairwise disjoint complex Borel sets such that $ \nu \bigl(( \mathbb{S } \times A_1 ) \cup... \cup( \mathbb{S }\times A_m ) \bigl)=1$.
\begin{Def}
\label{entro}
The entropy of the partition $ \xi$ with respect to $\nu$ is the following non-negative number
$$ H(\nu, \xi)=- \sum_{C \in \xi} \nu(C) \log(\nu(C)).$$
\end{Def}
We observe that this notion is independent of the polynomial $p$.

Let $ \xi_{n}^p$ denote the finite partition $ \xi \vee p^{-1}(\xi)\vee...\vee p^{-n+1}(\xi)$, i.e. the partition formed by the disjoint Borel sets of type
$$ (A_{i_0} \times \mathbb{S}) \cap p^{-1}(A_{i_1} \times \mathbb{S}) \cap... \cap p^{-n+1}(A_{i_{n-1}} \times \mathbb{S})$$
for $1 \leq i_0,...,i_{n-1} \leq m$.
We define
$$ h_{p}(\nu, \xi)=\lim_{n \to \infty} \frac{1}{n}H (\nu, \xi_n^p).$$

\begin{Def}
The entropy of the invariant probability measure $\nu$ is
$$ h_p (\nu)= \sup_{\xi}h_{p}(\nu, \xi).$$
\end{Def}
Now we prove that the entropy of the probability measure $ \nu$ is affine \cite[Thm. 8.1]{W}.
\begin{lemma}
\label{aff}
Let $ \nu_1$, $ \nu_2$ be probability measures and $ \alpha \in [0,1]$ then
 $$h_p( \alpha \nu_1+(1-\alpha) \nu_2)= \alpha h_p(\nu_1)+(1-\alpha)h_p( \nu_2)$$
\end{lemma}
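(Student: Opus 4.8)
The statement to prove is that the measure-theoretic entropy $h_p(\nu)$ is affine: $h_p(\alpha\nu_1 + (1-\alpha)\nu_2) = \alpha h_p(\nu_1) + (1-\alpha) h_p(\nu_2)$ for probability measures $\nu_1, \nu_2$ and $\alpha \in [0,1]$. This is the quaternionic analogue of Walters' classical result \cite[Thm.\ 8.1]{W}, so the plan is to reduce it to that reference by working slicewise or, more directly, to reproduce the two inequalities with the concavity/convexity estimates for the function $t \mapsto -t\log t$.

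Let me explain how I would carry it out.

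=== PROOF ===

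\begin{proof}
Write $\nu = \alpha \nu_1 + (1-\alpha)\nu_2$. If $\alpha \in \{0,1\}$ the claim is trivial, so assume $\alpha \in (0,1)$. The key elementary fact is the two-sided estimate for the function $\phi(t) = -t\log t$ on $[0,1]$: for $x, y \in [0,1]$ and $\alpha \in (0,1)$,
\begin{equation}
\label{phi-est}
\alpha \phi(x) + (1-\alpha)\phi(y) \leq \phi\bigl(\alpha x + (1-\alpha)y\bigr) \leq \alpha\phi(x) + (1-\alpha)\phi(y) - \alpha\log\alpha - (1-\alpha)\log(1-\alpha),
\end{equation}
the left inequality being concavity of $\phi$ and the right one a standard refinement. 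Applying \eqref{phi-est} termwise to a finite partition $\zeta$ of the compact axially symmetric set $K$ (of the type described before Definition \ref{entro}, with $\nu(C) = \alpha\nu_1(C) + (1-\alpha)\nu_2(C)$ for each atom $C$) gives
\begin{equation}
\label{H-est}
\alpha H(\nu_1,\zeta) + (1-\alpha) H(\nu_2,\zeta) \leq H(\nu,\zeta) \leq \alpha H(\nu_1,\zeta) + (1-\alpha) H(\nu_2,\zeta) + \log 2,
\end{equation}
where the additive constant is bounded by $\log 2$ uniformly in $\zeta$ and $\alpha$ (using $-\alpha\log\alpha-(1-\alpha)\log(1-\alpha) \le \log 2$).

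Now fix a finite partition $\xi$ of $K$ as above. For each $n$, the refined partition $\xi_n^p = \xi \vee p^{-1}(\xi) \vee \cdots \vee p^{-n+1}(\xi)$ is again a finite partition of $K$ by axially symmetric Borel sets, and the identity $\nu \circ p^{-j} = \alpha (\nu_1 \circ p^{-j}) + (1-\alpha)(\nu_2\circ p^{-j})$ shows that \eqref{H-est} applies to $\zeta = \xi_n^p$ with the same $\alpha$. Dividing by $n$, letting $n \to \infty$, and using that the $\log 2$ term disappears in the limit, we obtain
\begin{equation}
\label{h-xi}
h_p(\nu,\xi) = \alpha h_p(\nu_1,\xi) + (1-\alpha) h_p(\nu_2,\xi).
\end{equation}
Finally, taking the supremum over all admissible finite partitions $\xi$: from \eqref{h-xi} we get $h_p(\nu,\xi) \le \alpha h_p(\nu_1) + (1-\alpha) h_p(\nu_2)$ for every $\xi$, hence $h_p(\nu) \le \alpha h_p(\nu_1) + (1-\alpha) h_p(\nu_2)$; and conversely, given $\xi_1, \xi_2$ realizing the suprema for $\nu_1, \nu_2$ up to $\varepsilon$, the common refinement $\xi = \xi_1 \vee \xi_2$ satisfies $h_p(\nu_j,\xi) \ge h_p(\nu_j,\xi_j) \ge h_p(\nu_j) - \varepsilon$ by monotonicity of $h_p(\nu_j,\cdot)$ under refinement, so \eqref{h-xi} gives $h_p(\nu) \ge h_p(\nu,\xi) \ge \alpha h_p(\nu_1) + (1-\alpha) h_p(\nu_2) - \varepsilon$. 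Letting $\varepsilon \to 0$ yields the reverse inequality, and the two together give the claimed equality.
\end{proof}

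The only point requiring care — the main obstacle, such as it is — is checking that the class of partitions used in Definition \ref{entro} (products $\mathbb{S} \times A_i$ with $A_i$ complex Borel sets) is stable under the refinements $\xi \mapsto \xi_n^p$ and under taking common refinements $\xi_1 \vee \xi_2$, and that $p^{-1}$ of such a partition is again of this form; this follows because $p$ is slice preserving with real coefficients, so preimages of axially symmetric sets are axially symmetric. Everything else is the standard Walters argument transplanted verbatim.
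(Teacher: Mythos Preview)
Your proof is correct and follows essentially the same route as the paper: both reproduce the classical Walters argument by establishing the two-sided bound $0 \le H(\nu,\zeta) - \alpha H(\nu_1,\zeta) - (1-\alpha)H(\nu_2,\zeta) \le \log 2$ via convexity/concavity of $t\log t$, dividing by $n$ to obtain the partition-level equality \eqref{h-xi}, and then handling the supremum by refining a pair of near-optimal partitions. The paper additionally splits off the case $h_p(\nu_i)=\infty$ explicitly when choosing the approximating partitions, which you leave implicit; and your closing remark on stability of the axially symmetric partition class under $p^{-1}$ and common refinement is a point the paper does not spell out.
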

\begin{proof}
Firstly let us denote $ \phi(x)= x \log x $, we remark that this function is convex. Let $B$ be a quaternionic Borel subset
\begin{eqnarray*}
0 & \geq& \phi [ \alpha \nu_1(B)+(1- \alpha)\nu_2(B) ]-  \alpha \phi(\nu_1(B))-(1-\alpha) \phi(\nu_2(B))\\
&=& (\alpha \nu_1(B)+(1- \alpha)\nu_2(B)) \log [ \alpha \nu_1(B)+(1- \alpha)\nu_2(B) ]\\
&& -\alpha \nu_1(B) \log \nu_1(B)-(1-\alpha)\nu_2(B) \log \nu_2(B)=\\
\end{eqnarray*}
Adding and subtracting the terms $ \alpha \nu_1(B) \log( \alpha \nu_1(B))$ and $(1-\alpha) \nu_2(B) \log[(1- \alpha)\nu_2(B)]$ and using the fact that $\log$ is increasing we have
\begin{eqnarray*}
&& =\alpha \nu_1(B)  \bigl[\log [ \alpha \nu_1(B)+(1- \alpha)\nu_2(B)]- \log (\alpha \nu_1(B)) \bigl]
+(1- \alpha) \nu_2(B) \bigl[\log [ \alpha \nu_1(B)+(1- \alpha)\nu_2(B)]\\
&& - \log \bigl((1- \alpha) \nu_2(B)\bigl) \bigl] + \alpha \nu_1(B)[\log (\alpha \nu_1(B))- \log(\nu_1(B))]+ (1- \alpha) \nu_2(B)[\log \bigl((1- \alpha) \nu_2(B)\bigl)\\
&& -\log (\nu_2(B))] \geq 0+0+\alpha \nu_1(B) \log \alpha+ (1- \alpha) \nu_2(B) \log (1- \alpha).\\
\end{eqnarray*}
If $\xi$ is a generic partition, by Definition \ref{entro}, we obtain:
$$ 0 \leq H(\alpha \nu_1+(1- \alpha)\nu_2, \xi)- \alpha H( \nu_1, \xi)-(1- \alpha)H(\nu_2, \xi) \leq -\bigl(\alpha  \log \alpha+ (1- \alpha) \log (1- \alpha) \bigl) \leq \log 2.$$
If $ \eta$ is a finite partition putting $ \xi= \eta \vee p^{-1}(\eta) \vee...\vee p^{-n+1}(\eta)$, multiplying by $1/n$ and making the limit in the above formula we obtain:
\begin{equation} \label{entro1}
h_p(\alpha \nu_1+(1- \alpha)\nu_2, \eta)=\alpha h_p(\nu_1, \eta)+(1-\alpha)h_p( \nu_2, \eta).
\end{equation}
This implies
$$h_p( \alpha \nu_1+(1-\alpha) \nu_2)\leq \alpha h_p(\nu_1)+(1-\alpha)h_p( \nu_2).$$
In order to show the other inequality, let us consider $ \varepsilon >0$  and we choose $ \eta_1$ so that
$$ h_p(\nu_1, \eta_1)> \begin{cases}
h_p(\nu_1)- \varepsilon \quad \hbox{if} \quad h_{p}(\nu_1)< \infty \\
\frac{1}{\varepsilon}  \quad \hbox{if} \quad h_{p}(\nu_1)=\infty
\end{cases}$$
and $\eta_2$ such that
$$ h_p(\nu_2, \eta_2)> \begin{cases}
h_p(\nu_2)- \varepsilon \quad \hbox{if} \quad h_{p}(\nu_2)< \infty \\
\frac{1}{\varepsilon}  \quad \hbox{if} \quad h_{p}(\nu_2)=\infty
\end{cases}$$
Putting $ \eta= \eta_1 \vee \eta_2$ in \eqref{entro1} we obtain
$$ h_p(\alpha \nu_1+(1- \alpha)\nu_2, \eta)> \begin{cases}
 \alpha h_p(\nu_1)+(1-\alpha)h_p( \nu_2)- \varepsilon \quad \hbox{if} \quad  h_{p}(\nu_1), h_{p}(\nu_2)< \infty\\
 \frac{1}{\varepsilon} \quad \hbox{if either} \quad  h_{p}(\nu_1)= \infty \quad \hbox{or} \quad h_{p}(\nu_2)= \infty.
\end{cases}$$
Hence $$h_p( \alpha \nu_1+(1-\alpha) \nu_2)\geq \alpha h_p(\nu_1)+(1-\alpha)h_p( \nu_2).$$
\end{proof}

\begin{theorem}
\label{ent2}
The quaternionic equilibrium measure $\mu$ is the unique invariant measure of measurable entropy equals to $\log d$.
\end{theorem}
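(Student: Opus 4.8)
The plan is to transfer the statement to the complex slices. The two facts that make this possible are the representation $\mu=\frac1{4\pi}\int_{\mathbb S}\mu_I\,dI$ and the observation that, since $p$ has real coefficients, the canonical $\mathbb R$--algebra isomorphism $\mathbb C_I\cong\mathbb C$ turns the slice polynomial $p_I$ into the ordinary complex polynomial $p_{\mathbb C}$ and, the equilibrium measure being dynamically defined, turns $\mu_I$ into the classical complex equilibrium measure $\mu_{\mathbb C}$; as $|\mathbb S|=4\pi$ this yields $\mu(\mathbb S\times A)=\mu_{\mathbb C}(A)$ for every complex Borel set $A$. First I would compute $h_p(\mu)$. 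Because $p$ has real coefficients, $p^{-1}(\mathbb S\times A)=\mathbb S\times p_{\mathbb C}^{-1}(A)$ for every circular set $\mathbb S\times A$ (the condition $p(\alpha+I\beta)\in\mathbb S\times A$ depends only on $\alpha+i\beta$, through $p_{\mathbb C}(\alpha+i\beta)\in A$), so for an admissible partition $\xi=\{\mathbb S\times A_1,\dots,\mathbb S\times A_m\}$ the iterated partition $\xi_n^p$ consists exactly of the circular sets $\mathbb S\times B$ with $B$ an atom of the complex iterated partition $(\xi_{\mathbb C})_n^{p_{\mathbb C}}$ of $\xi_{\mathbb C}:=\{A_1,\dots,A_m\}$. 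Hence $H(\mu,\xi_n^p)=H(\mu_{\mathbb C},(\xi_{\mathbb C})_n^{p_{\mathbb C}})$ for all $n$, so $h_p(\mu,\xi)=h_{p_{\mathbb C}}(\mu_{\mathbb C},\xi_{\mathbb C})$, and taking the supremum over admissible $\xi$---which is the supremum over all finite Borel partitions of $\mathbb C$---gives $h_p(\mu)=h_{p_{\mathbb C}}(\mu_{\mathbb C})=\log d$, since $\mu_{\mathbb C}$ is the unique measure of maximal entropy of $p_{\mathbb C}$ (\cite{L,FLM}) and $h_t(p_{\mathbb C})=\log d$ by Theorem \ref{Poen}. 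Invariance of $\mu$ is Proposition \ref{inv}.

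For uniqueness, let $\nu$ be an axially symmetric $p$--invariant probability measure with compact support and $h_p(\nu)=\log d$, and write $\nu=\frac1{4\pi}\int_{\mathbb S}\nu_I\,dI$ with $\nu_I$ a measure supported on $\mathbb C_I$. Since $p$ preserves each slice, $\nu_I$ is $p_I$--invariant for a.e.\ $I$, and the computation above, applied on $\mathbb C_I$, gives $h_p(\nu_I)=h_{p_{\mathbb C}}\bigl((\nu_I)_{\mathbb C}\bigr)$. Using the affinity of the measurable entropy in its integrated form (Lemma \ref{aff}) I would write $\log d=h_p(\nu)=\frac1{4\pi}\int_{\mathbb S}h_p(\nu_I)\,dI=\frac1{4\pi}\int_{\mathbb S}h_{p_{\mathbb C}}\bigl((\nu_I)_{\mathbb C}\bigr)\,dI$. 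By the classical variational principle no $p_{\mathbb C}$--invariant probability measure has entropy larger than $h_t(p_{\mathbb C})=\log d$ (Theorem \ref{Poen}), so each integrand is $\le\log d$ and equality forces $h_{p_{\mathbb C}}\bigl((\nu_I)_{\mathbb C}\bigr)=\log d$ for a.e.\ $I$; by uniqueness of the measure of maximal entropy in the complex case this gives $(\nu_I)_{\mathbb C}=\mu_{\mathbb C}$, i.e.\ $\nu_I=\mu_I$, for a.e.\ $I$, whence $\nu=\mu$. An invariant measure not assumed axially symmetric is first averaged over the group of rotations of $\mathbb H$ fixing $\mathbb R$, which commute with $p$ because $p$ has real coefficients and, by Lemma \ref{aff}, preserve the entropy; combined with the ergodic decomposition this reduces the general case to the axially symmetric one.

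The step I expect to be the main obstacle is the passage from the two--measure affinity of Lemma \ref{aff} to the continuous version $h_p\bigl(\frac1{4\pi}\int_{\mathbb S}\nu_I\,dI\bigr)=\frac1{4\pi}\int_{\mathbb S}h_p(\nu_I)\,dI$ used in the uniqueness argument; I would derive it from affinity on finite convex combinations by an approximation, using that $I\mapsto h_p(\nu_I)$ is measurable and uniformly bounded by $\log d$. The other delicate point is the bookkeeping behind the reduction to a single slice: checking that $\xi_n^p$ stays circular and mirrors the complex iterated partition, and, for uniqueness, setting up the disintegration $\nu=\frac1{4\pi}\int_{\mathbb S}\nu_I\,dI$ with $p_I$--invariant fibres over $\mathbb H\setminus\mathbb R\cong\{\operatorname{Im}(z)>0\}\times\mathbb S$ and extending it across the real axis, which is null for all the measures involved.
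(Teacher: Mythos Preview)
Your proposal is correct and follows the paper's overall strategy of reducing to the complex slices via the representation $\mu=\frac{1}{4\pi}\int_{\mathbb S}\mu_I\,dI$ and invoking the classical uniqueness of the measure of maximal entropy. The mechanics differ in one respect worth noting: for the computation of $h_p(\mu)$ the paper applies Lemma~\ref{aff} directly, arguing that an affine function commutes with the integral to get $h_p(\mu)=\frac{1}{4\pi}\int_{\mathbb S}h_p(\mu_I)\,dI=\log d$; you instead work at the level of partitions, observing that a circular partition $\xi=\{\mathbb S\times A_i\}$ has iterates $\xi_n^p$ that remain circular and match the complex iterated partition atom for atom, so that $H(\mu,\xi_n^p)=H(\mu_{\mathbb C},(\xi_{\mathbb C})_n^{p_{\mathbb C}})$ and hence $h_p(\mu)=h_{p_{\mathbb C}}(\mu_{\mathbb C})$. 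Your route has the advantage that it sidesteps, for the existence half, the very passage from two-term affinity to the continuous integral version that you flag as the main obstacle; the paper uses that passage for both halves without comment. For uniqueness the two arguments coincide in spirit (restrict to a slice and quote the complex result), but your version is considerably more careful about the disintegration, the a.e.\ slice invariance, and the reduction of a general invariant $\nu$ to the axially symmetric case via rotation-averaging, points the paper dispatches in a single sentence.
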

\begin{proof}
From Lemma \ref{aff} we know that the measurable entropy is affine with respect to the measure. Since the quaternionic equilibrium measure $\mu$ was defined as $\frac{1}{4 \pi} \int_{I \in \mathbb{S}} \mu_I \,dI$ and since an affine function always commutes with the integral, we have that
$$h_p(\mu)=h_p \Big( \frac{1}{4 \pi} \int_{I \in \mathbb{S}} \mu_{I} \, dI \Big)= \frac{1}{4 \pi} \int_{I \in \mathbb{S}} h_{p}(\mu_{I}) \, dI= \frac{1}{4 \pi} \int_{I \in \mathbb{S}} \log d \, dI= \log d.$$

For the unicity it is sufficient to observe that if $\nu$ is another invariant measure of maximal entropy then $\nu_{|_{\mathbb{C}_{I}}}$ has to coincide with $\mu_I$, the complex equilibrium measure of $\mathbb{C}_I$, and hence $\nu \equiv \mu$.
\end{proof}
\begin{theorem}
\label{ent1}
The topological entropy of the quaternionic polynomial $p$ of degree $d$ with real coefficients is $ \log d$.
\end{theorem}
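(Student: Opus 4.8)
The plan is to sandwich the quaternionic system $(K_p,p)$ — with $K_p$ the filled Julia set of $p$, i.e.\ the circularization of the complex filled Julia set $K_{p_I}$, hence a compact $p$-invariant axially symmetric subset of $\mathbb{H}$ — between two complex systems of known entropy, and to invoke monotonicity of topological entropy under subsystems and factors (see \cite{W}). The structural input is that a polynomial with real coefficients is slice preserving, so that writing $p=\mathcal{I}(P)$ with stem polynomial $P=P_1+\iota P_2$ of real components one has
$$ p(\alpha+J\beta)=P_1(\alpha,\beta)+J\,P_2(\alpha,\beta),\qquad \alpha+J\beta\in\mathbb{H}, $$
hence $p^n(\alpha+J\beta)=P_1^{(n)}(\alpha,\beta)+J\,P_2^{(n)}(\alpha,\beta)$: the imaginary unit $J$ is never moved by the iteration, and $K_p\cap\mathbb{C}_I=K_{p_I}$ for every $I\in\mathbb{S}$ (recall $K_{p_I}$ is symmetric about $\mathbb{R}$ by Corollary \ref{filled}).

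\emph{Lower bound.} Fix $I\in\mathbb{S}$. The slice $\mathbb{C}_I$ is $p$-invariant with $p|_{K_{p_I}}=p_I$, so $(K_{p_I},p_I)$ is a subsystem of $(K_p,p)$; two points of $K_{p_I}$ which are $(n,\varepsilon)$-separated for $p_I$ remain $(n,\varepsilon)$-separated for $p$, whence $N(K_p,n,\varepsilon)\ge N(K_{p_I},n,\varepsilon)$. By Definition \ref{entropy} and Theorem \ref{Poen}, $h_t(p,K_p)\ge h_t(p_I,K_{p_I})=\log d$.

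\emph{Upper bound.} I would exhibit $(K_p,p)$ as a topological factor of the product system $\bigl(K_{p_I}\times\mathbb{S},\,p_I\times\mathrm{id}_{\mathbb{S}}\bigr)$ through the map
$$ \pi_0:K_{p_I}\times\mathbb{S}\longrightarrow K_p,\qquad \pi_0(\alpha+i\beta,\,J):=\alpha+J\beta, $$
which is continuous, surjective onto $K_p$ (because $K_{p_I}$ is symmetric about $\mathbb{R}$) and satisfies $p\circ\pi_0=\pi_0\circ(p_I\times\mathrm{id}_{\mathbb{S}})$ by the displayed formula. Since topological entropy does not increase under factor maps and is additive on products of compact systems, $h_t(p,K_p)\le h_t(p_I,K_{p_I})+h_t(\mathrm{id}_{\mathbb{S}})=\log d+0=\log d$. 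Equivalently, and staying inside the Bowen framework of this section, the displayed identity gives $\dis_n(\alpha_1+J_1\beta_1,\alpha_2+J_2\beta_2)\le \dis_n^{P}(\alpha_1+i\beta_1,\alpha_2+i\beta_2)+R\,|J_1-J_2|$ on $K_p$, with $R:=\sup_{w\in K_p}|\mathrm{Im}\,w|<\infty$ by compactness and invariance of $K_p$; so a $p$-Bowen ball $\mathbb{B}(a;n,\varepsilon)$ contains the product of a complex $(n,\varepsilon/2)$-Bowen ball with a cap of radius $\varepsilon/(2R)$ on $\mathbb{S}$, whence $M(K_p,n,\varepsilon)\le N_\varepsilon\,M(K_{p_I},n,\varepsilon/2)$ with $N_\varepsilon$ independent of $n$, and Definition \ref{entropy1} with Theorem \ref{Poen} again yields $h_t(p,K_p)\le\log d$. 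Combined with the lower bound, $h_t(p,K_p)=\log d$.

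The step I expect to be the main obstacle is the upper bound: one must certify that the two-sphere of imaginary units carries no exponential complexity — this is exactly the $\mathrm{id}_{\mathbb{S}}$ factor, or equivalently the uniform constant $R$. A minor point requiring care is the real axis, where the coordinate $J$ degenerates: there $P_2(\alpha,0)=0$, so $\pi_0$ collapses $\{\alpha\}\times\mathbb{S}$ to the single point $\alpha$ and the cross term in the identity vanishes, so both the factor map and the Bowen-distance estimate remain valid across $\mathbb{R}$ without modification.
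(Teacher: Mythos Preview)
Your proof is correct and rests on the same mechanism as the paper's: the slice-preserving structure makes $(K_p,p)$ behave like $K_{p_I}\times\mathbb{S}$ with $p_I$ on the first factor and the identity on the second, so the sphere direction contributes no exponential growth. The route, however, differs. The paper argues directly at the level of separated sets, writing $N(K_I\times\mathbb{S},n,\varepsilon)=d^n\cdot C/\varepsilon$ (with $C/\varepsilon$ the number of $\varepsilon$-slices needed to cover $\mathbb{S}$) and then taking $\frac{1}{n}\log$ and the limit; this is essentially your alternative Bowen estimate $M(K_p,n,\varepsilon)\le N_\varepsilon\cdot M(K_{p_I},n,\varepsilon/2)$, but stated informally and without the explicit Bowen-distance decomposition or the constant $R$. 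Your primary argument via the subsystem inclusion and the factor map $\pi_0:K_{p_I}\times\mathbb{S}\to K_p$ is not in the paper; it trades the hands-on count for the standard monotonicity and product formulas from \cite{W}, which makes the vanishing of the $\mathbb{S}$-contribution automatic ($h_t(\mathrm{id}_{\mathbb{S}})=0$) and sidesteps any separate handling of the degeneracy on $\mathbb{R}$. The paper's version is shorter and stays entirely within the Bowen setup of the section; yours is more structural and would generalise more readily.
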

\begin{proof}
From Theorem \ref{Poen}, for $n=1$, we have $N(K_{I},1, \varepsilon)= d$, with $K_I \subseteq \mathbb{C}_I$, where $ K_I$ is the complex filled Julia set defined in Definition \ref{rr2}. By iterating we get $N(K_{I},n, \varepsilon)= d^{n}$.
Let $\varepsilon$ be the distances between the slices, then the number of planes that we need to affect the sphere is $ \frac{C}{\varepsilon}$, where $C$ is a positive constant, which depends on the measure of the sphere. From this argument it follows that
$$ N(K_{I} \times \mathbb{S}, n, \varepsilon)= d^{n} \frac{C}{ \varepsilon}.$$
Finally from Definition \ref{entropy} we have that
\begin{eqnarray*}
 h_{t}(p,K) =&& \sup_{\varepsilon>0} \overline{\lim_{n \to + \infty}} \frac{1}{n} \log \bigl(d^{n} \frac{C}{\varepsilon} \bigl)\\
=&& \sup_{\varepsilon>0} \overline{\lim_{n \to + \infty}} \frac{1}{n} \log (d^{n})+ \sup_{\varepsilon>0} \overline{\lim_{n \to + \infty}} \frac{1}{n} \log( \frac{C}{\varepsilon})= \log d.
\end{eqnarray*}
\end{proof}
From Theorems \ref{ent1} and \ref{ent2} it follows that
\begin{Cor}
The entropy of the equilibrium measure is equal to the topological entropy.
\end{Cor}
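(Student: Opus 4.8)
The plan is to read the statement off directly from the two preceding results. By Theorem~\ref{ent2} the measurable entropy of the quaternionic equilibrium measure equals $\log d$, i.e. $h_p(\mu)=\log d$; this was obtained by using that $\nu\mapsto h_p(\nu)$ is affine (Lemma~\ref{aff}), that an affine functional commutes with the average defining $\mu=\frac{1}{4\pi}\int_{I\in\mathbb{S}}\mu_I\,dI$, and that on each slice $h_p(\mu_I)=\log d$ by the classical complex theory. On the other hand, Theorem~\ref{ent1} gives the topological entropy $h_t(p,K)=\log d$, coming from the slicewise count $N(K_I,n,\varepsilon)=d^n$ of Theorem~\ref{Poen} together with the estimate of the number of slices needed to fill the sphere, whose contribution $\frac{1}{n}\log(C/\varepsilon)$ vanishes as $n\to+\infty$. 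Comparing the two constants then yields $h_p(\mu)=\log d=h_t(p,K)$, which is exactly the asserted equality; no further computation is required.

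I would also record that this equality is the sharp form of the variational principle in the present setting: for a polynomial with real coefficients one always has $h_p(\nu)\le h_t(p,K)$ for every invariant probability measure $\nu$ supported on $K$, and the corollary says that $\mu$ realizes this supremum, in accordance with the uniqueness part of Theorem~\ref{ent2}. The only point deserving care — and it is already dealt with inside the proofs of Theorems~\ref{ent1} and~\ref{ent2} — is that the ``extra'' factor $C/\varepsilon$ produced by the sphere direction does not affect the exponential growth rate, and that affinity of $h_p$ survives the continuous average over $\mathbb{S}$. The main (and very mild) obstacle is therefore purely bookkeeping: checking that the normalization of $\mu$ and the choice of the compact axially symmetric set $K=K_{p_I}\times\mathbb{S}$ are the same in the two theorems, so that the two occurrences of $\log d$ genuinely refer to one and the same dynamical system.
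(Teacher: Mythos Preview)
Your proposal is correct and matches the paper's approach exactly: the corollary is stated as an immediate consequence of Theorems~\ref{ent1} and~\ref{ent2}, each yielding $\log d$, and the paper gives no further argument beyond citing these two results. Your additional remarks on the variational principle and the bookkeeping are accurate elaborations, but the paper itself treats the corollary as a one-line deduction.
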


\begin{nb}
In Definition \ref{entro} if we consider as probability measure $ \nu$ the equilibrium measure $ \mu$ of a slice preserving polynomial $p$ and if we consider $C \in \xi$ the sets of the form $ \mathbb{S} \times A_I$ we have
$$ H(\mu, \xi)=- \sum_{C \in \xi} \mu(A_I) \log(\mu(A_I)),$$
and it is independent of I.
\end{nb}

\begin{prop}(Quaternionic variational principle)
If $K$ is a compact axially symmetric set containing the support of an invariant measure $\nu$, for $p$ a slice preserving polynomial we have
$$ h_p(\nu)\leq h_t(p,K).$$
\end{prop}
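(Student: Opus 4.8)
The plan is to reduce the statement to the classical variational principle (see \cite{W}) applied to the compact $p$-invariant set $\mathrm{supp}(\nu)$, together with two elementary monotonicity remarks. First I would observe that the number $h_p(\nu)$ considered here does not exceed the classical Kolmogorov--Sinai entropy $h^{\mathrm{KS}}_\nu(p)$ of $p$ regarded as a continuous self-map of $\mathbb{H}\cong\mathbb{R}^4$. Indeed $h_p(\nu)=\sup_\xi h_p(\nu,\xi)$, where $\xi$ ranges only over the axially symmetric finite partitions $\{\mathbb{S}\times A_1,\dots,\mathbb{S}\times A_m\}$; but each $\mathbb{S}\times A_i$ is an ordinary Borel subset of $\mathbb{H}$, the refinements $\xi_n^p$ and the quantities $H(\nu,\cdot)$ of Definition~\ref{entro} are formed exactly as in the classical theory, so $h_p(\nu,\xi)=h^{\mathrm{KS}}_\nu(p,\xi)$ for every such $\xi$, and hence $h_p(\nu)\le\sup_{\xi}h^{\mathrm{KS}}_\nu(p,\xi)=h^{\mathrm{KS}}_\nu(p)$, the last supremum now being taken over all finite Borel partitions.

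Next, since $\nu$ is a probability measure with $\mathrm{supp}(\nu)\subseteq K$ and $K$ is compact, $\mathrm{supp}(\nu)$ is compact, and $p_*\nu=\nu$ forces $p(\mathrm{supp}(\nu))\subseteq\mathrm{supp}(\nu)$; thus $(\mathrm{supp}(\nu),p)$ is a genuine topological dynamical system. The classical variational principle then gives $h^{\mathrm{KS}}_\nu(p)=h^{\mathrm{KS}}_\nu\!\big(p|_{\mathrm{supp}(\nu)}\big)\le h_t\big(p,\mathrm{supp}(\nu)\big)$, the right-hand side being precisely Bowen's topological entropy of Definition~\ref{entropy}, which is the notion Walters' theorem produces on a compact invariant set. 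Finally, $\mathrm{supp}(\nu)\subseteq K$ implies $N(\mathrm{supp}(\nu),n,\varepsilon)\le N(K,n,\varepsilon)$ for all $n$ and all $\varepsilon$ --- any finite subset of $\mathrm{supp}(\nu)$ that is pairwise $(n,\varepsilon)$-separated is in particular a pairwise $(n,\varepsilon)$-separated subset of $K$ --- whence $h_t(p,\mathrm{supp}(\nu))\le h_t(p,K)$ by Definition~\ref{entropy}. Chaining the three inequalities yields $h_p(\nu)\le h_t(p,K)$.

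The one point that needs care is the identification $h_p(\nu,\xi)=h^{\mathrm{KS}}_\nu(p,\xi)$ in the first step: one must check that neither the special product shape $\mathbb{S}\times A_i$ of the atoms nor the normalizations used earlier introduces a spurious factor, so that $h_p(\nu)$ is literally a supremum of classical quantities over a subfamily of partitions. If instead a self-contained proof is preferred, the statement can be obtained by transcribing Walters' argument into $\mathbb{R}^4$: approximate each complex Borel set $A_i$ from inside by compact sets, pass to the associated partition $\{B_0,B_1,\dots,B_m\}$ for which $H(\nu,\xi\mid\{B_j\})$ is as small as desired, and then compare $H(\nu,\xi_n^p)$ with $\log N(K,n,\varepsilon)$ from Definition~\ref{entropy}, using $(n,\varepsilon)$-Bowen balls of radius small relative to the mutual distances of the compact sets $B_1,\dots,B_m$; here the axial symmetry of $K$ permits choosing the $B_j$ axially symmetric, so the construction stays within the framework of the partitions allowed in Definition~\ref{entro}. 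In that route I expect the main obstacle to be the familiar combinatorial estimate at the heart of Walters' proof, bounding the number of positive-measure atoms of $\xi_n^p$ in terms of $N(K,n,\varepsilon)$.
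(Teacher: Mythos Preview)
Your main route is correct and genuinely different from the paper's argument. The paper does \emph{not} reduce to the classical variational principle; instead it transcribes Walters' proof directly into the axially symmetric setting: given a partition $\eta=\{\mathbb{S}\times C_1,\dots,\mathbb{S}\times C_k\}$, it approximates each $C_i$ from inside by a compact $D_i$, forms $\beta=\{\mathbb{S}\times D_0,\dots,\mathbb{S}\times D_k\}$ with $D_0$ the complement, builds the open cover $\mathcal{U}=\{\mathbb{S}\times(D_0\cup D_j)\}_j$, observes that each element of $\mathcal{U}_n$ is a union of at most $2^n$ atoms of $\beta_n$, gets $H(\nu,\beta_n)\le n\log 2+\log L(n)\le n\log 2+\log N(K,n,\varepsilon)$, and finally removes the spurious $\log 2+1$ by passing to $p^n$ and using $h_{p^n}(\nu)=n\,h_p(\nu)$ and $h_t(p^n,K)=n\,h_t(p,K)$. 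This is precisely the ``self-contained'' alternative you sketch in your last paragraph.

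Your reduction is cleaner: the inequality $h_p(\nu)\le h^{\mathrm{KS}}_\nu(p)$ is immediate once one checks that the paper's $H(\nu,\xi)$ and $\xi_n^p$ coincide with the classical objects (they do, by inspection of Definition~\ref{entro}), and the remaining two steps are standard. What the paper's approach buys is self-containment and an explicit display of how the product structure $\mathbb{S}\times A_i$ behaves under $p^{-i}$; what your approach buys is brevity and the conceptual point that nothing genuinely new is happening beyond restricting the family of admissible partitions. The caveat you flag---that no spurious normalization creeps into $h_p(\nu,\xi)$---is the right one to check, and it is harmless here.
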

\begin{proof}
Let $ \eta= \{\mathbb{S} \times  C_1 ,..., \mathbb{S}  \times C_k  \}$ be a measurable partition of $K$, with the all $C_i$ in the same slice. Given $ \delta >0$, for each $i=1,...,k$, let $D_i$  be all in the same slice and be a compact subset of $ C_i$ such that $ \mathbb{S}  \times D_i \subset \mathbb{S} \times C_i  $ and $\nu((\mathbb{S} \times C_i )   \setminus (\mathbb{S}\times D_i) )< \delta.$ Now let $ \beta= \{\mathbb{S} \times D_0,...,  \mathbb{S} \times D_k\}$, where $D_0= K_I \setminus \bigcup_{i=1}^k D_i$. As in the complex case \cite[Thm. 1.1.5.37]{DS} \cite[Thm 4.7]{B1}
\begin{equation}
\label{E2}
h_p(\nu, \eta) <  h_p(\nu, \beta)+1.
\end{equation}
Now we construct a sequence of open covers
$$ \mathcal{U}= \{ \mathbb{S} \times (D_0 \cup D_1) ,...,  \mathbb{S} \times (D_0 \cup D_k) \},$$
as well as the covers
$$ \mathcal{U}_n =\bigl \{ \bigcap_{i=0}^{n-1} p^{-i} U_i: \, U_0,..., U_{n-1} \in \mathcal{U} \},$$
for each $n \in \mathbb{N}$. We call $L(n)$ the minimal number of sets in $ \mathcal{U}_n$ needed to cover $K$. We want to put in relation $L(n)$ and $ \beta_n= \bigvee_{i=0}^{n-1}p^{-i} \beta $. By the distributivity property of the cartesian product:
$$ p^{-i}[\mathbb{S} \times (D_0 \cup D_j)]=p^{-i}[(\mathbb{S} \times D_0 ) \cup (\mathbb{S} \times D_j)]=p^{-i}(\mathbb{S} \times D_0 ) \cup p^{-i}(\mathbb{S} \times D_j )$$
for $j=1,...,k$ and $i \in \mathbb{N}$. So any element of $ \mathcal{U}_n$ is the union of at most $2^n$ elements of $\beta_n$.  Moreover, since the elements in $\beta_n$ are disjoint, each element of $\beta_n$ is contained in any given subcover of $ \mathcal{U}_n$ . Therefore,
\begin{equation}
\# \beta_n \leq 2^n L(n)
\end{equation}
Furthermore, by the concavity of $-x \log x$, we obtain
\begin{equation}
\label{E3}
H(\nu, \beta_n) \leq \log \# \beta_n \leq n \log 2+ \log L(n).
\end{equation}
Moreover, as in the complex case,
\begin{equation}
\label{E4}
L(n) \leq N(K,n, \varepsilon).
\end{equation}
Thus, from \eqref{E3} and \eqref{E4} we have
\begin{equation}
\label{HP}
h_p(\nu, \beta)= \lim_{n \to \infty} \frac{1}{n} H(\nu, \beta_n) \leq \log 2+ \limsup_{n \to \infty} \frac{1}{n} \log N(K,n, \varepsilon).
\end{equation}
Substituting \eqref{HP} in \eqref{E2} and letting $ \varepsilon \to 0$ we get
\begin{equation}
h_{p}(\nu, \eta) \leq h_t(p,K)+ \log 2 +1
\end{equation}
hence
$$ h_{p}(\nu) \leq h_t(p,K)+ \log 2 +1.$$
Later we apply this inequality to $p^n$  and by \cite[Prop. 4.3, pag. 115]{B} and \cite[Prop. 3.1.7(3)]{KH}
we have
$$h_p(\nu)= \frac{1}{n} h_{p^n}(\nu) \leq \frac{1}{n}(h_t(p^n,K)+ \log 2 +1)=\frac{1}{n}(n h_t(p,K)+ \log 2 +1)= h_t(p,K)+ \frac{\log 2 +1}{n}$$
for each $n \in \mathbb{N}$. Letting $n \to \infty$, we conclude that $h_p(\nu) \leq h_t(p,K)$.
\end{proof}
By Theorem \ref{ent1}, we have the following result:
\begin{Cor}
For every invariant probability measure $ \nu$ and $p$ a slice preserving polynomial we have
$$ h_{p}(\nu) \leq  \log d.$$
\end{Cor}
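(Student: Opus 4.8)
The plan is to obtain the corollary at once from the Quaternionic variational principle stated just above, together with Theorem~\ref{ent1}. The only genuine point to settle is the choice of the compact axially symmetric set $K$ to which the variational principle is applied: it must contain the support of the given invariant probability measure $\nu$ \emph{and} satisfy $h_t(p,K)=\log d$. The right choice is the circularization $\Omega_{K_{p_I}}$ of the filled Julia set of the slice restriction $p_I$; note that since $p$ has real coefficients, $K_{p_I}\subseteq\mathbb{C}_I$ does not depend on $I$ and is symmetric with respect to $\mathbb{R}$ by Corollary~\ref{filled}, so $\Omega_{K_{p_I}}$ is a well-defined compact axially symmetric subset of $\mathbb{H}$ (it is the continuous image of the compact set $K_{p_I}\times\mathbb{S}$).

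First I would verify that $\mathrm{supp}(\nu)\subseteq\Omega_{K_{p_I}}$. Because $\nu$ is $p$-invariant, $p_*\nu=\nu$; since $p$ is continuous and $\mathrm{supp}(\nu)$ is compact, $\mathrm{supp}(\nu)=\mathrm{supp}(p_*\nu)=\overline{p(\mathrm{supp}(\nu))}=p(\mathrm{supp}(\nu))$, so $\mathrm{supp}(\nu)$ is forward invariant and bounded, whence every point of $\mathrm{supp}(\nu)$ has bounded $p$-orbit. As $p$ is slice preserving and acts on each $\mathbb{C}_I$ as the complex polynomial with the same real coefficients, a point $q=\alpha+I\beta$ has bounded orbit exactly when $\alpha+i\beta\in K_{p_I}$; hence the non-escaping set of $p$ in $\mathbb{H}$ is precisely $\Omega_{K_{p_I}}$, and therefore $\mathrm{supp}(\nu)\subseteq\Omega_{K_{p_I}}$.

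Finally I would apply the Quaternionic variational principle with $K:=\Omega_{K_{p_I}}$, which is compact, axially symmetric and contains $\mathrm{supp}(\nu)$: this gives $h_p(\nu)\leq h_t(p,K)$. By Theorem~\ref{ent1} --- whose proof computes the topological entropy precisely on $K_{p_I}\times\mathbb{S}$ --- one has $h_t(p,K)=\log d$, and the inequality $h_p(\nu)\leq\log d$ follows. I do not expect any real obstacle here: the whole content is carried by the variational principle and by the entropy computation of Theorem~\ref{ent1}; the only thing to be careful about is the support bookkeeping above, i.e. checking that no invariant mass can escape the circularized filled Julia set, which is exactly where the real-coefficient hypothesis (via Corollary~\ref{filled}) enters.
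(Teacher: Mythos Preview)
Your proposal is correct and follows essentially the same route as the paper: the corollary is deduced immediately from the Quaternionic variational principle together with Theorem~\ref{ent1}. You are in fact more careful than the paper, which simply writes ``By Theorem~\ref{ent1}'' and leaves implicit both the choice of $K$ and the verification that $\mathrm{supp}(\nu)\subseteq\Omega_{K_{p_I}}$; your forward-invariance argument filling this gap is fine (and consistent with the paper's standing assumption that invariant measures have compact support).
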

This means that the quaternionic equilibrium measure, with respect to a slice preserving polynomial, is the measure of maximal entropy $ \log d$.

According to a theorem of Oseledec, \cite{O}:
\begin{theorem}
\label{split}
Let $\mathbb{H}$ be endowed with the quaternionic equilibrium measure $\mu$ with respect to a slice preserving polynomial $p$. Then, for almost all $q=x+yI \in \mathbb{H} \cong \mathbb{R}^{4}$, there exists a decomposition of $T_q \mathbb{R}^4= \mathbb{R}^4$ in real subspaces,
\begin{equation}
\label{CH1}
\mathbb{R}^4= \mathbb{R}^2_{(x,y)} \oplus T_{I}(\mathbb{S})
\end{equation}
and there exists a real number $ \lambda_1>0$ such that, for almost all $q=x+yI \in \mathbb{H}$
$$ \lim_{n \to + \infty} \frac{1}{n} \log| dp^{n}_q v|= \lambda_1$$
if $v \in T_{(x,y)}(\mathbb{R}^2)$. Let us define the projection
\begin{eqnarray}
\label{rr3}
&&\Pi: \mathbb{H}\setminus{\mathbb{R}} \to \mathbb{S}\\
\nonumber
&&q= \alpha+ I_q \beta \mapsto I_q
\end{eqnarray}
with $ \beta >0$. The properties of this projection are that $ \Pi\left(p(q)\right)= \Pi(q)=I_q$ and $ \Pi( \mathbb{S})= \mathbb{S}$. Therefore
$$ \lim_{n \to + \infty} \frac{1}{n} \log| d(\Pi \circ p^{n}_q) v|= 0$$
if $v \in T_{I}(\mathbb{S})$, where $d(\Pi \circ p^n_q)$ is the differential of $\Pi \circ p^n$ in $q$ with respect to the splitting \eqref{CH1}.
\end{theorem}
\begin{Cor}
The \emph{Lyapounov exponents} of $p$ with respect to $\mu$ are $\lambda_1$ and $0$.
\end{Cor}
\begin{nb}
Since the quaternionic measure $ \mu$ is mixing and ergodic (since it is so in all slices), the Lyapounov exponents of $p$ are constants.
\end{nb}
\begin{proof}(of Theorem \ref{split})
On each $ \mathbb{C}_I$, the behaviour of $p$ is the same of the corresponding 1-complex-variable polynomial $p_I$ of degree $d$.
\\ In the direction tangent to $ \mathbb{S}$, $p$ does not change, see \eqref{rr3}. This means that the polynomial is the identity, making the differential and evaluating the modulus we get $| d(\Pi \circ p^{n}_q) v|=1$ and finally since $\log | d(\Pi \circ p^{n}_q )v|=0$ we obtain that in the directions of $T_{I}(\mathbb{S})$ the Lyapounov exponent is $0$.
\end{proof}
\begin{nb}
Since $\lambda_1$ is positive and by theorem of Lyubich \cite{LY} it follows: $$\max \{ \lambda_1, 0 \}= \lambda_1 \geq \frac{1}{2} \log d>0.$$
\end{nb}
\subsection{Topological and measurable entropy: one slice preserving case}
Now, we evaluate the topological entropy and the measurable entropy with respect to the equilibrium measure in the one slice preserving case (see Theorem \ref{Brolin2}). In the following theorems $\mathcal{P}_I$ is a quaternionic polynomial with all coefficients in $ \mathbb{C}_I$ but not all real and $g_n$ is defined as in \eqref{gn}.
\begin{theorem}
The topological entropy of the quaternionic polynomial $\mathcal{P}_I$ is $ \log d$.
\end{theorem}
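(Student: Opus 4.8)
The plan is to adapt the proof of Theorem~\ref{ent1} to the one slice preserving setting: fix a compact axially symmetric $K\subseteq\mathbb{H}$ large enough to contain the support of the equilibrium measure of Theorem~\ref{Brolin2}, and show that all the exponential complexity of the separation numbers $N(K,n,\varepsilon)$ is carried by the slice $\mathbb{C}_I$, on which $\mathcal{P}_I$ restricts to a genuine complex polynomial of degree $d$, while the directions transverse to $\mathbb{C}_I$ contribute only a factor that is sub-exponential in $n$. Throughout, the $n$-th iterate of $\mathcal{P}_I$ is understood so that it restricts on $\mathbb{C}_I$ to the $n$-th iterate of $\mathcal{P}_I|_{\mathbb{C}_I}$, and I would use both the formulation of $h_t$ through $N(K,n,\varepsilon)$ and the one through the covering numbers $M(K,n,\varepsilon)$, related by \eqref{stima}.

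For the lower bound, put $K_I=K\cap\mathbb{C}_I$, chosen to contain the filled Julia set of $\mathcal{P}_I|_{\mathbb{C}_I}$. By Theorem~\ref{Poen} one has $h_t(\mathcal{P}_I|_{\mathbb{C}_I},K_I)=\log d$, and since an $(n,\varepsilon)$-separated subset of $K_I$ for $\mathcal{P}_I|_{\mathbb{C}_I}$ is, because $\mathcal{P}_I(\mathbb{C}_I)\subseteq\mathbb{C}_I$, also $(n,\varepsilon)$-separated in $K$ for $\mathcal{P}_I$, we get $N(K,n,\varepsilon)\ge N(K_I,n,\varepsilon)$ for every $n$ and $\varepsilon$; passing to $\overline{\lim}_n\frac1n\log(\cdot)$ and then to $\sup_{\varepsilon>0}$ yields $h_t(\mathcal{P}_I,K)\ge\log d$.

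For the upper bound I would argue as in Theorem~\ref{ent1}, covering $K$ by $(n,\varepsilon)$-Bowen's balls: the slice $\mathbb{C}_I$ needs $\sim d^{n}$ of them by Theorem~\ref{Poen}, while each nearby slice $\mathbb{C}_J$ is mapped by $\mathcal{P}_I=\mathcal{I}(F_1+\iota F_2)$ into the two-real-dimensional set $\{F_1(z)+JF_2(z):z\in\mathbb{C}_J\}$, so the iterates of $\mathcal{P}_I$ distort the directions transverse to $\mathbb{C}_I$ with a distortion controlled, uniformly in $n$, by the derivatives of $F_1,F_2$ on the fixed compact set $K$. This should give a covering of $K$ by at most $d^{n}\,C(\varepsilon)$ Bowen's balls with $C(\varepsilon)$ independent of $n$ --- the analogue of the factor $C/\varepsilon$ in Theorem~\ref{ent1} --- whence, by Definition~\ref{entropy1},
$$
h_t(\mathcal{P}_I,K)=\sup_{\varepsilon>0}\ \overline{\lim_{n\to+\infty}}\ \frac1n\log\bigl(d^{n}C(\varepsilon)\bigr)=\log d ,
$$
and, combined with the lower bound, this proves the claim.

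The main obstacle is precisely this transverse estimate, which is where the argument genuinely departs from Theorem~\ref{ent1}: there $p$ is slice preserving and acts as the identity along $\mathbb{S}$, so the transverse contribution is visibly $n$-independent, whereas here $\mathcal{P}_I$ moves the slices and one must rule out extra exponential complexity off $\mathbb{C}_I$ --- that is, show that the ``transverse dynamical degree'' of $\mathcal{P}_I$ equals $1$. I expect this to require uniform (in $n$) Lipschitz-type bounds for the iterates of $\mathcal{P}_I$ on $K$ in the directions normal to $\mathbb{C}_I$, extracted from the slice structure of $\mathcal{P}_I$; a useful surrogate is that the symmetrized polynomials $g_n=(\mathcal{P}_I^{n})^{s}$ of \eqref{gn} restrict on every slice to honest complex polynomials of degree $2d^{n}$, so that in any case $\frac1n\log(\text{degree})\to\log d$, which is consistent with $\log d$ being the correct value.
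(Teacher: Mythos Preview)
Your lower bound is fine, but the upper bound via a direct transverse estimate for the iterates of $\mathcal{P}_I$ is exactly the gap you flag yourself, and you do not close it: you assert that the distortion in the directions normal to $\mathbb{C}_I$ is ``controlled, uniformly in $n$'', but the iterates of a polynomial that genuinely mixes the slices can have derivatives that grow with $n$, and nothing in the slice-function formalism you invoke prevents this from contributing an exponential factor. As written, the argument for $M(K,n,\varepsilon)\le d^nC(\varepsilon)$ is a hope, not a proof.

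The paper avoids this obstacle entirely by taking the route you mention only as a ``surrogate'' at the very end: it computes the topological entropy through the sequence $g_n=(\mathcal{P}_I^{\,n})^{s}$ of \eqref{gn}. Since each $g_n$ is slice preserving of degree $2d^n$, the argument of Theorem~\ref{ent1} applies verbatim --- the transverse direction is literally trivial for a slice preserving map --- and one gets
\[
N(K_I\times\mathbb{S},n,\varepsilon)=2d^{\,n}\,\frac{C}{\varepsilon},
\]
so that $h_t(g_n,K)=\sup_{\varepsilon>0}\overline{\lim}_n\,\frac1n\log\bigl(2d^{\,n}C/\varepsilon\bigr)=\log d$. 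In other words, the paper does not attempt to control the transverse dynamics of $\mathcal{P}_I$ at all; it replaces the problematic iterates by the symmetrized ones, for which the question reduces immediately to the real-coefficient case. Your observation that $\frac1n\log(2d^n)\to\log d$ is not merely a consistency check --- it \emph{is} the paper's computation.
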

\begin{proof}
Using the same arguments of Theorem \ref{ent1} and since $g_n$ is slice preserving with degree $2d^n$ we have that
$$ N(K_{I} \times \mathbb{S}, n, \varepsilon)= 2 d^{n} \frac{C}{ \varepsilon}.$$
Finally from Definition \ref{entropy} we have that
\begin{equation}
\label{rr4}
h_{t}(g_n,K) =\sup_{\varepsilon>0} \overline{\lim_{n \to + \infty}} \frac{1}{n} \log \bigl(2 d^{n} \frac{C}{\varepsilon} \bigl)=\log d.
\end{equation}
\end{proof}

\begin{theorem}
The equilibrium measure of $\mathcal{P}_I$ has measurable entropy equals to $ \log d$.
\end{theorem}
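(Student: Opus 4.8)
The plan is to reproduce, in the one slice preserving setting, the argument used for Theorem~\ref{ent2}: I would combine the affinity of the measurable entropy (Lemma~\ref{aff}) with the integral representation of the equilibrium measure furnished by Theorem~\ref{Brolin2}. Recall that
$$\mu' = \frac{1}{8\pi}\int_{I\in\mathbb{S}}\mu_{\mathfrak{P}(I)}\,dI + \frac{1}{8\pi}\int_{I\in\mathbb{S}}\mu_{\mathfrak{P}^{c}(I)}\,dI,$$
where, for each fixed $I\in\mathbb{S}$, the slice measures $\mu_{\mathfrak{P}(I)}$ and $\mu_{\mathfrak{P}^{c}(I)}$ are the classical complex equilibrium measures on $\mathbb{C}_I$ of the complex polynomials $\mathfrak{P}(\cdot,I)$ and $\mathfrak{P}^{c}(\cdot,I)$, both of degree $d$ (slice conjugation only conjugates the coefficients of $\mathcal{P}_I$, so the degree is preserved).

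First I would invoke the one-dimensional complex theory: for every $I$, the measure $\mu_{\mathfrak{P}(I)}$ is the unique invariant measure of maximal entropy of $\mathfrak{P}(\cdot,I)$, so that $h(\mu_{\mathfrak{P}(I)})=\log d$ by Theorem~\ref{Poen} together with the classical uniqueness of the measure of maximal entropy for complex polynomials; the same is true of $\mu_{\mathfrak{P}^{c}(I)}$. Then, arguing exactly as in the proof of Theorem~\ref{ent2}, i.e.\ using that an affine functional commutes with the integral over $\mathbb{S}$, Lemma~\ref{aff} yields
$$h(\mu') = \frac{1}{8\pi}\int_{I\in\mathbb{S}} h(\mu_{\mathfrak{P}(I)})\,dI + \frac{1}{8\pi}\int_{I\in\mathbb{S}} h(\mu_{\mathfrak{P}^{c}(I)})\,dI = \frac{1}{8\pi}\int_{I\in\mathbb{S}}\log d\,dI + \frac{1}{8\pi}\int_{I\in\mathbb{S}}\log d\,dI.$$
With the normalisation $\tfrac{1}{4\pi}\int_{I\in\mathbb{S}}dI=1$ used throughout the paper, each of the two integrals contributes $\tfrac12\log d$, so $h(\mu')=\log d$. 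I would then append uniqueness as in Theorem~\ref{ent2}: a second invariant measure of maximal entropy must restrict on each slice to the corresponding complex measure of maximal entropy, hence it coincides with $\mu'$.

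The hard part is conceptual rather than computational. Since the iterates $g_n=(\mathcal{P}^n_I)^{s}$ do not obey an iteration relation ($g_{n+1}\neq g_1(g_n)$), there is no single self-map of $\mathbb{H}$ with respect to which the entropy of $\mu'$ can be taken directly; the entropy has to be read slice by slice, where the genuine dynamical systems are the degree-$d$ complex polynomials $\mathfrak{P}(\cdot,I)$ and $\mathfrak{P}^{c}(\cdot,I)$, and only afterwards recombined through the affine/integral identity above. For this recombination to be legitimate I would need to know that the disintegration of $\mu'$ over $\mathbb{S}$ is exactly the convex combination $\tfrac{1}{8\pi}\mu_{\mathfrak{P}(I)}+\tfrac{1}{8\pi}\mu_{\mathfrak{P}^{c}(I)}$, so that Lemma~\ref{aff} may be applied termwise; this is read off from the explicit form of the limit measure obtained in the proof of Theorem~\ref{Brolin2}, while Lemma~\ref{Par} ensures that $\mu'$ carries no mass on $\mathbb{R}$, so that the slice decomposition is unambiguous and no overlap along the real axis needs a separate treatment.
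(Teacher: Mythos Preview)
Your argument is essentially the same as the paper's: you invoke the integral decomposition of $\mu'$ from Theorem~\ref{Brolin2}, apply the affinity of the entropy functional (Lemma~\ref{aff}) to pass it inside the integral over $\mathbb{S}$, use the complex one-variable result $h(\mu_{\mathfrak{P}(I)})=h(\mu_{\mathfrak{P}^{c}(I)})=\log d$ on each slice, and conclude with the normalisation $\frac{1}{4\pi}\int_{\mathbb{S}}dI=1$. The paper's proof is exactly this computation, written more tersely; your additional remarks on Lemma~\ref{Par}, on the disintegration, and on uniqueness go beyond what the paper records (uniqueness is not part of this theorem's statement), but the core proof coincides.
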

\begin{proof}
By Theorem \ref{Brolin2} we know that for a polynomial with all coefficients in $ \mathbb{C}_I$ but not all real the equilibrium measure is of the following form  $ \frac{1}{8 \pi} \int_{I \in \mathbb{S}} \mu_{\mathfrak{P}(I)} dI + \frac{1}{8 \pi} \int_{I \in \mathbb{S}} \mu_{\mathfrak{P}^c (I)} dI$. Since by Lemma \ref{aff} the measurable entropy is affine we obtain
\begin{eqnarray*}
h_{g_n} \biggl( \frac{1}{8 \pi} \int_{I \in \mathbb{S}} \mu_{\mathfrak{P}(I)}dI + \frac{1}{8 \pi} \int_{I \in \mathbb{S}} \mu_{\mathfrak{P}^c (I)} dI  \biggl) &=& \frac{1}{4 \pi} h_{g_n} \biggl( \frac{1}{2} \int_{I \in \mathbb{S}} \mu_{\mathfrak{P}(I)} dI +\frac{1}{2} \int_{I \in \mathbb{S}}  \mu_{\mathfrak{P}^c (I)} dI  \biggl)\\
&=& \frac{1}{4 \pi}  \biggl( \frac{1}{2} \int_{I \in \mathbb{S}} h_{g_n}(\mu_{\mathfrak{P} (I)}) dI +\frac{1}{2} \int_{I \in \mathbb{S}} h_{g_n}(\mu_{\mathfrak{P}^c (I)})dI  \biggl)\\
&=& \frac{1}{8 \pi}  ( 4 \pi \log d + 4 \pi \log d)= \log d,
\end{eqnarray*}
where the last equality comes from \eqref{rr4}.
\end{proof}
As in the slice preserving case  we have the following result.
\begin{Cor}
The entropy of the equilibrium measure of $\mathcal{P}_I$ is equal to the topological entropy.
\end{Cor}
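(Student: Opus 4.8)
The plan is to deduce the statement directly from the two theorems established immediately above, exactly as the corresponding corollary in the slice preserving case was deduced from Theorems \ref{ent1} and \ref{ent2}. The first of the two preceding theorems shows that the topological entropy of $\mathcal{P}_I$ equals $\log d$: one repeats the counting argument of Theorem \ref{ent1}, now using that $g_n=(\mathcal{P}_I^{n})^{s}$ is slice preserving of degree $2d^{n}$, so that $N(K_I\times\mathbb{S},n,\varepsilon)=2d^{n}\tfrac{C}{\varepsilon}$, and observes that the extra bounded factor $2C/\varepsilon$ does not affect $\sup_{\varepsilon>0}\overline{\lim}_{n\to\infty}\tfrac{1}{n}\log(\cdot)$. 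The second preceding theorem shows that the equilibrium measure $\mu'=\tfrac{1}{8\pi}\int_{I\in\mathbb{S}}\mu_{\mathfrak{P}(I)}+\tfrac{1}{8\pi}\int_{I\in\mathbb{S}}\mu_{\mathfrak{P}^{c}(I)}$ of Theorem \ref{Brolin2} has measurable entropy $\log d$, which in turn follows from the affinity of the measure-theoretic entropy (Lemma \ref{aff}) and the fact that on each slice the underlying complex equilibrium measures have entropy $\log d$.

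Granting these two facts, the corollary is immediate: both the topological entropy of $\mathcal{P}_I$ and the measurable entropy of its equilibrium measure are equal to $\log d$, hence they coincide. I would therefore present the proof as a single sentence invoking the two preceding theorems, in complete analogy with the slice preserving case.

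The only point deserving a brief remark — and the nearest thing to an obstacle — is to check that the two entropies are taken with respect to compatible dynamics: in both preceding theorems the relevant map is the slice preserving map $g_n$ on a compact axially symmetric set, so that ``the entropy of the equilibrium measure'' means $h_{g_n}(\mu')$ and ``the topological entropy'' means $h_t(g_n,K)$. Since both computations independently land on the same value $\log d$, no further reconciliation between the two notions is needed, and the proof reduces to a one-line comparison.
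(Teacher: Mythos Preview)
Your proposal is correct and matches the paper's approach exactly: the paper states the corollary without proof, prefacing it with ``As in the slice preserving case we have the following result,'' thereby leaving it as an immediate consequence of the two theorems just proved (topological entropy $=\log d$ and measurable entropy of $\mu'$ $=\log d$). Your one-line comparison, together with the parenthetical remark that both entropies are computed with respect to the same slice preserving dynamics $g_n$, is precisely what the paper intends.
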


\hspace{2mm}

\noindent
Cinzia Bisi,
Dipartimento di Matematica e Informatica \\Universit\`a di Ferrara\\
Via Ma\-chia\-vel\-li n.~30\\
I-44121 Ferrara\\
Italy

\noindent
\emph{email address}: bsicnz@unife.it\\
\emph{ORCID iD}: 0000-0002-4973-1053

\vspace*{5mm}
\noindent
Antonino De Martino,
Dipartimento di Matematica \\ Politecnico di Milano\\
Via Bonardi n.~9\\
20133 Milano\\
Italy

\noindent
\emph{email address}: antonino.demartino@polimi.it\\
\emph{ORCID iD}: 0000-0002-8939-4389


\begin{thebibliography}{AAAA}
\bibitem[ACS]{ACS} D.~Alpay, F.~Colombo, I.~Sabadini, \emph{ Slice Hyperholomorphic Schur Analysis}, Volume 256 of {\em Operator Theory: Advances and Applications}. Birkh\"{a}user, Basel. 2017.
\bibitem[ACS1]{ACS1} D. Alpay, F.~Colombo, I.~Sabadini, \emph{ Quaternionic de Branges spaces and characteristic operator function}, SpringerBriefs in Mathematics, Springer, Cham, 2020.		
\bibitem[AB]{AB} A. Altavilla, C. Bisi, \emph{Log- biharmonicity and a Jensen formula in the space of quaternions}, Ann. Acad. Sci. Fenn. Math. (\textbf{44}) 2 (2019), 805-839.
\bibitem[ANB]{ANB} D.~Angella, C.~Bisi, \emph{Slice-Quaternionic Hopf Surfaces}, J. Geom. Anal. (\textbf{29}) 3 (2019), 1837-1858.
\bibitem[B1]{B1} L.~Barreira, \emph{Ergodic Theory, Hyperbolic Dynamics and Dimension Theory} , Springer, 2012.
\bibitem[BG]{BG} C.~Bisi, G.~Gentili, \emph{On quaternionic tori and their moduli space}, J. Noncommut. Geom. (\textbf{12}) 2 (2018), 473–510.
\bibitem[CS]{CS} C.~Bisi, C.~Stoppato, \emph{Landau's theorem for slice regular functions on the quaternionic unit ball},  Internat. J. Math. (\textbf{28}) 3 (2017),1750017, 21 pp.
\bibitem[BW]{BW} C.~Bisi, J.~Winkelmann, \emph{The harmonicity of slice regular functions}, Published on-line on The Journal of Geometric Analysis, on November 5.th 2020. DOI : 10.1007/s12220-020-00551-7.
\bibitem[BW2]{BW2} C.~Bisi, J.~Winkelmann, \emph{On Runge pairs and topology of axially symmetric domains}, Published on-line on the Journal of Noncommutative Geometry, on October 14.th 2020, DOI : 10.4171/JNCG/409.
\bibitem[BW3]{BW3} C.~Bisi, J.~Winkelmann, \emph{On a quaternionic Picard theorem}, Proc. Amer. Math. Soc., Series B, vol. 7, p. 106-117, (2020).
\bibitem[B]{B} H.~Brolin, \emph{Invariant sets under iteration of rational functions.} Ark. Mat (\textbf{6}) (1965), 103-144.
\bibitem[B2]{B2} X.Buff \emph{La mesure d’équilibre d’un endomorphisme de $\mathbb {P}^k(\mathbb {C})$}, Séminaire Bourbaki 47 (2004-2005): 33-70.
\bibitem[CG]{CG} F.~Colombo, J.~Gantner, \emph{Quaternionic closed operators, fractional powers and fractional diffusion processes}, Operator Theory: Advances and Applications, 274. Birkh\"auser/Springer, Cham, 2019. viii+322 pp.
\bibitem[CGK]{CGK} F.~Colombo, J.~Gantner, D.P.~Kimsey, \emph{Spectral theory on the S-spectrum for quaternionic operators}, Operator Theory: Advances and Applications, 270. Birkhäuser/Springer, Cham, 2018.
\bibitem[CSS]{CSS}  F.~Colombo, I.~Sabadini, D.C.~Struppa, \emph{Noncommutative functional calculus, Progress in Mathematics}, vol. 289, Birkhäuser/Springer Basel AG, Basel, 2011.
\bibitem[CSS1]{CSS1} F.~Colombo, I.~Sabadini, D.C.~Struppa, \emph{Entire Slice Regular Functions}, SpringerBriefs in Mathematics, Springer, Cham, 2016.
\bibitem[DS]{DS} T.C.~Dinh, N.~Sibony,  \emph{Dynamics of holomorphic maps}, Introductory Lectures (Master Paris 6) available at http://www.math.jussieu.fr/~dinh.
\bibitem[DS1]{DS1} T.C.~Dinh, N.~Sibony, \emph{Dynamique des applications d'allure polynomiale}. J. Math. Pures Appl. (\textbf{9}) 82 (2003), 367-423.
\bibitem[DS2]{DS2} T.C.~Dinh, N.~Sibony, \emph{Decay of correlations and the central limit theorem for mermorphic  maps}. J. Math. Pures Appl. (\textbf{59}) (2006), 754-768.
\bibitem[DS3]{DS3} T.C.~Dinh, N.~Sibony, \emph{Distribution des valeurs de transformations méromorphes et applications}, Arxiv: 0306095.
\bibitem[DS4]{DS4} T.C.~Dinh, N.~Sibony, \emph{Dynamics in Several Complex Variables: Endomorphisms of Projective Spaces and Polynomial-like Mappings}, Holomorphic  dynamical systems, 165-294, Lecture Notes in Math.,1998, Springer, Berlin (2010).
\bibitem[FLM]{FLM} A.~Freire,  A.~Lopez, R. Manè, \emph{An invariant measure for rational maps}, Bol. Soc.Bras. Mat. (\textbf{14})(1983), 45–62.
\bibitem[FJ]{FJ} C.~Favre, M.~Jonsson, \emph{Brolin's theorem for curves in two complex dimensions}, Annales de l’institut Fourier, (\textbf{53}) (2003): 1461-1501.
\bibitem[GSS1]{GSS1} S.G.~ Gal, J.~Oscar Gonzàlez-Cervantes, I.~Sabadini, \emph{On some Geometric properties of slice regular functions of a quaternionic varaible},  Complex Var. Elliptic Equ. (\textbf{60}) 10 (2015), 1431–1455.
\bibitem[GSS]{GSS} G.~Gentili, C.~Stoppato, D.C~Struppa, \emph{Regular functions of a quaternionic varaible}, Berlin: Springer, 2013.
\bibitem[GMP]{GMP} R.~Ghiloni, V.~Moretti, A.~Perotti, \emph{Spectral properties of compact normal quaternionic operators}, Hypercomplex analysis: new perspectives and applications, 133–143, Trends Math., Birkhäuser/Springer, Cham, 2014.
\bibitem [GP]{GP} R.~Ghiloni, A.~Perotti, \emph{Slice regular functions on real alternative algebras}, Adv. Math.,  (\textbf{2}) 226 (2011): 1662-1691.
\bibitem[G]{G} M.I.~Gordin, \emph{The central limit theorem for stationary Processes},  Dokl. Akad. Nauk SSSR , (\textbf{188}) (1969), 739-741.
\bibitem[KH]{KH} A Katok , B.Hasselblatt  \emph{Introduction to the Modern Theory of Dynamical Systems} Cambridge University Press, 1995.
\bibitem[L]{L} C.~Liverani, \emph{The central limit theorem for Deterministic Systems}, International Conferences on Dynamical Systems (Montevideo, 1995), 56-75, Pitman Res. Notes Mat. (\textbf{362}), Longman, Harlow, (1996).
\bibitem[LY]{LY} M.J.~Lyubich, \emph{Entropy properties of rational endomorphisms of the Riemann sphere}, Ergodic Theory Dynam. Systems, (\textbf{3}) (1983), 351-385.
\bibitem[O]{O}V.I.~Oseledec, \emph{Multiplicative ergodic theorem, Lyapounov characteristics numbers for dynamical systems}. Trans. Moscow Math. Soc. (\textbf{19}) (1968), 197-221.
\bibitem[P]{P} A.~Perotti, \emph{Slice regularity and harmonicity on Clifford algebras}, In : Bernstein S. (eds) Topics in Clifford Analysis. A Special Volume in Honor of W. Spr\"ossig. Trends in Mathematics. Springer Series, Birkhäuser, 53-73 (2019).
\bibitem[P2]{P2} A.~Perotti, \emph{A four dimensional Jensen formula}, Riv. Mat. Univ. Parma (in press)  Arxiv: 1902.06485.
\bibitem[S1]{S1} M.~Sce, \emph{Osservazioni sulle serie di potenze nei moduli quadratici}, Atti Accad. Naz. Lincei. Rend. CI. Sci.Fis. Mat. Nat. (\textbf{23}) 8 (1957), 220-225.
\bibitem[S]{S} N.~Steinmetz, \emph{Rational iteration: complex analytic dynamical systems}, De Gruyter Studies in Mathematics, New York 1993.
\bibitem[W]{W} P.~Walters, \emph{An introduction to ergodic theory}, Springer-Verlag New York Inc.,New York, 2000.

\end{thebibliography}
\end{document}